\numberwithin{equation}{section}
\newtheorem{Theorem}{Theorem}
\newtheorem*{maintheorem}{Main Theorem}
\newtheorem*{theorem3'}{Theorem 3'}
\newtheorem{Proposition}{Proposition}[section]
\newtheorem{Lemma}{Lemma}[section]
\newtheorem{definition}{Definition}[section]
\renewcommand{\v}{\mathbf{v}}
\newcommand{\C}{\mathbb C}
\newcommand{\CC}{\widehat{\mathbb{C}}}
\newcommand{\Q}{\mathbb{Q}}
\newcommand{\R}{\mathbb{R}}
\newcommand{\Z}{\mathbb{Z}}
\newcommand{\Poly}{\text{Poly}}
\title{Primitive tuning via quasiconformal surgery}
\author{Weixiao Shen and Yimin Wang}
\date{\today}
\address{Shanghai Center for Mathematical Sciences,
Fudan University, No. 2005 Songhu Road, Shanghai 200438, China}
\email{wxshen@fudan.edu.cn, 16110840001@fudan.edu.cn}% Activate to display a given date or no date (if empty),
\begin{document}
\maketitle

\begin{abstract} Using quasiconformal surgery, we prove that any primitive, postcritically-finite hyperbolic polynomial can be tuned with an arbitrary generalized polynomial with non-escaping critical points, generalizing a result of Douady-Hubbard for quadratic polynomials to the case of higher degree polynomials. This solves affirmatively a conjecture by Inou and Kiwi on surjectivity of the renormalization operator on higher degree polynomials in one complex variable.
\end{abstract}
\section{Introduction}
Quasiconformal surgery is a powerful technique in the theory of holomorphic dynamics of one variable. The process often consists of two steps. First, we construct a quasi-regular map with certain dynamical properties, by modifying part of a holomorphic dynamical system, or extending the definition of a partially defined holomorphic dynamical system. Then we prove existence of a rational map which is (essentially) conjugate to the quasi-regular mapping. Successful applications of this technique include Douady-Hubbard's theory of polynomial-like mappings (\cite{DH2}) and Shishikura's sharp bounds on the number of Fatou cycles (\cite{Shishi}), among others.

Tuning is an operator introduced by Douady-Hubbard~\cite{DH2} to prove existence of small copies in the Mandelbrot set. Recall that the Mandelbort set consists of all complex numbers $c$ for which $P_c(z)=z^2+c$ has a connected Julia set. Let $P_a(z)=z^2+a$ be a quadratic polynomial for which the critical point $0$ has period $p$.  Douady-Hubbard proved that there is a homeomorphism $\tau=\tau_a$, from $\mathcal{M}$ into itself with $\tau_a(0)=a$ and with the property that the $p$-th iterate of $P_{\tau(c)}$ has a quadratic-like restriction which is hybrid equivalent to $P_c$. Intuitively, the filled Julia set of $P_{\tau(c)}$ is obtained from that of $P_a$ by replacing each bounded Fatou component of $P_a$ with a copy of the filled Julia set of $P_c$.
Douady-Hubbard's argument involved detailed knowledge of the combinatorics of the Mandelbrot set and also continuity of the straightening map in the quadratic case, and thus breaks down for higher degree polynomials.

In~\cite{IK}, Inou-Kiwi defined a natural analogy of the map $\chi=\tau^{-1}$ for higher degree polynomials, which we will call the {\em IK straightening map}. Given a postcritically finite hyperbolic polynomial $f_0$ of degree $d\ge 2$, together with an internal angle system, they defined a map $\chi$ from a certain space $\mathcal{R}(f_0)$ into a space $\mathcal{C}(T)$ which consists of generalized polynomials over the reduced mapping scheme $T=T(f_0)$ of $f_0$ with fiberwise connected Julia set. The map $\chi$ is injective and in general not continuous~\cite{I3}.  The map $f\in \mathcal{R}(f_0)$ is the tuning of $f_0$ with $\chi(f)$ in the sense of Douady and Hubbard when the Julia set $\chi(f)$ is fiberwise locally connected. In the case that $f_0$ is {\em primitive}, i.e. the bounded Fatou components have pairwise disjoint closures, the set $\mathcal{R}(f_0)$ coincide with a combinatorially defined set $\mathcal{C}(f_0)$
which is known to be compact. Inou-Kiwi's argument is mostly combinatorial and they posed a conjecture that in the case that $f_0$ is primitive, $\chi$ is surjective and with connected domain. The fact that $\chi$ is surjective means that $f_0$ can be tuned by every $g\in \mathcal{C}(T)$.

In this paper, we shall prove the conjecture of Inou and Kiwi, using quasiconformal surgery technique. In particular, this shows that a primitive postcritically finite hyperbolic map $f_0$ can be tuned by each $g\in \mathcal{C}(T(f_0))$.

\begin{maintheorem} Let $f_0$ be a postcritically finite hyperbolic polynomial that is primitive and fix an internal angle system for $f_0$. Then the IK straightening map $\chi:\mathcal{C}(f_0)\to \mathcal{C}(T)$ is bijective and $\mathcal{C}(f_0)$ is connected.
\end{maintheorem}
We shall recall the definition of Inou-Kiwi's straightening map later. Let us just mention now that if $f_0(z)=z^d+a$ for some $a\in \C$ then $\mathcal{C}(T)$ is the set of all monic centered polynomials of degree $d$ which have connected Julia sets.

The main part of the proof is to show the surjectivity of the map $\chi$. It is fairly easy to construct a quasi-regular map $\widetilde{f}$ which has a generalized polynomial-like restriction hybrid equivalent to a given generalized polynomial in $\mathcal{C}(T)$, via qc surgery in a union of annuli domains around the critical Fatou domains of $f_0$. In order to show that there is a polynomial map with essentially the same dynamics as $\widetilde{f}$, we run Thurston's algorithm to $\widetilde{f}$. We modify an argument of Rivera-Letelier~\cite{R-L} to show the convergence. In order to control distortion, we use a result of Kahn~\cite{Kahn} on removability of certain Julia sets. After surjectivity is proved, we deduce connectivity of $\mathcal{C}(f_0)$ from that of $\mathcal{C}(T)$ which is a theorem of Branner-Hubbard~\cite{BH} and Lavaurs~\cite{La}.

In \cite{EY}, qc surgery was successfully applied to construct intertwining tuning. Our case is more complicated since the surgery involves the non wandering set of the dynamics.

The paper is organized as follows. In \S\ref{sec:IK}, we recall definition of generalized polynomial-like maps and the IK straightening map. In \S\ref{sec:puzzle}, we construct a specific Yoccoz puzzle for postcritically finite hyperbolic primitive polynomials which is used to construct renormalizations. In \S\ref{sec:Kahn}, we prove a technical distortion lemma. In \S\ref{sec:Thurston}, we prove a convergence theorem for Thurston algorithm. The proof of the main theorem is given in \S\ref{sec:surgery} using qc surgery.

\medskip
\noindent
{\bf Acknowledgment.} We would like to thank the referee for carefully reading the paper and in particular, for pointing out an error in Section 3 of the first version. This work was supported by NSFC No. 11731003.
\section{The IK straightening map}\label{sec:IK}
We recall some basic notations and the definition of IK straightening maps. The multi-critical nature of the problem makes the definition a bit complicated.

Let $\text{Poly}_d$ denote the set of all monic centered polynomials of degree $d$, i.e. polynomials of the form $z\mapsto z^d+ a_{d-2}z^{d-2}+\cdots+a_0$, with $a_0,a_1,\cdots, a_{d-2}\in \C$. For each $f\in \text{Poly}_d$, let $K(f)$ and $J(f)$ denote the filled Julia set and the Julia set respectively.
Let $P(f)$ denote the postcritical set of $f$:
$$P(f)=\overline{\bigcup_{f'(c)=0, c\in \C}\bigcup_{n=1}^\infty \{f^n(c)\}}.$$

Let $$\mathcal{C}_d=\{f\in \Poly_d: K(f) \text{ is connected}\}.$$
\subsection{External Rays and Equipotential Curves}
For $f\in Poly_d$, the {\em Green function} is defined as
$$G_f(z)=\lim_{n\to\infty} \frac{1}{d^n}\log^+ |f^n(z)|,$$
where $\log^+=\max(\log , 0).$
The Green function is continuous and subharmonic in $\C$, satisfying $G_f(f(z))=dG_f(z)$. It is nonnegative, harmonic and takes positive values exactly in the attracting basin of $\infty$
\[B_f(\infty):=\{z\in \mathbb C\mid f^n(z) \to \infty\ as~n \to \infty\}=\C\setminus K(f).\]

Assume $f\in\mathcal{C}_d$. Then there exists a unique conformal map \[\phi_f:B_f(\infty) \to \{z\mid |z|>1\}\]
such that $\phi(z)/z\to 1$ as $z\to\infty$ and such that $\phi_f\circ f(z)=(\phi_f(z))^d$ on $B_f(\infty)$.
The {\em external ray of angle $t \in \R/\Z$} is defined as
\[\mathcal R_{f}(t)=\{\phi_f^{-1}(re^{i2\pi t})\mid 1<r<\infty\},\]
and the {\em equipotential curve of pontential $l\in (0,\infty)$} as
\[E_{f}(l)=\{\phi_f^{-1}(e^{l+i2\pi\theta})\mid 0\le\theta<1\}.\]

We say the external ray $\mathcal R_f(t)$ {\em lands} at some point $z_0$ if $\lim_{r\to 1}\phi_f^{-1}(re^{i2\pi t})=z_0$. It is known that for each $t\in \mathbb{Q}/\Z$, $\mathcal{R}_f(t)$ lands at some point. On the other hand, if $z_0$ is a repelling or parabolic periodic point, then there exists at least one but at most finitely many external rays landing at $z_0$. See for example~\cite{Mil1}.

The {\em rational lamination} of $f$, denoted by $\lambda(f)$, is the equivalence relation on $\Q/\Z$ so that $\theta_1\sim \theta_2$ if and only if $\mathcal{R}_f(\theta_1)$ and $\mathcal{R}_f(\theta_2)$ land at the same point.

\subsection{Generalized polynomials over a scheme}\label{subsec:GPL}
Now let us consider a postcritically finite hyperbolic polynomial $f_0$.
Following Milnor \cite{Mil3}, we define the {\em reduced mapping scheme} $$T(=T(f_0))=(|T|,\sigma,\delta)$$ of $f_0$ as follows. Let $|T|$ denote the collection of critical bounded Fatou components of $f_0$. For each $\v\in |T|$, let $r(\v)$ be the minimal positive integer such that $f_0^{r(\v)}(\v)$ is again a critical Fatou component. Define $\sigma:|T|\to |T|$ and $\delta: |T|\to \{2,3,\cdots\}$ by
$$\sigma(\v)=f_0^{r(\v)}(\v)\mbox{ and }\delta(\v)=\text{deg}(f_0^{r(\v)}|\v)=\text{deg}(f_0|\v).$$

\begin{definition}[Generalized Polynomials] A generalized polynomial $g$ over $T$ is a map
\[g:|T|\times \mathbb C \to |T|\times \mathbb C\]
such that $g(\v,z)=(\sigma(\v),g_{\v}(z))$ where $g_{\v}(z)$ is a monic centered polynomial of degree $\delta(\v)$.
\end{definition}

Denote by $P(T)$ the set of all generalized polynomials over the scheme $T$.
Given $g \in P(T)$, the {\em filled Julia set} $K(g)$ of $g$ is the set of points in $|T|\times \mathbb C$ whose forward orbits are precompact. The boundary $\partial K(g)$ of the filled Julia set is called the {\em Julia set} $J(g)$ of $g$. The filled Julia set $K(g)$ is called {\em fiberwise connected} if $K(g,\v):=K(g)\cap (\{\v\}\times \mathbb C)$ is connected for each $\v\in |T|$. Let $\mathcal C(T)$ be the set of all the generalized polynomials with fiberwise connected filled Julia set over $T$.

We shall need external rays and Green function for $g\in \mathcal{C}(T)$ which can be defined similarly as in the case of a single polynomial. Indeed, for each $\v\in |T|$ there exists a unique conformal $\varphi_{\v,g}:\C\setminus K(g,\v)\to \C\setminus \overline{\mathbb{D}}$ such that $\varphi_{\v,g}(z)/z\to 1$ as $z\to\infty$ and these maps satisfy $\varphi_{\sigma(\v),g}(g_\v(z))=\varphi_{\v,g}(z)^{\delta(\v)}$. For $t\in\mathbb{R}/\mathbb{Z}$, the {\em external ray}
$\mathcal{R}_g(\v,t)$ is defined as $\varphi_{\v,g}^{-1}(\{re^{2\pi i t}: r>1\})$. The {\em Green function} of $g$ is defined as
$$G_g(\v,z)=\left\{\begin{array}{ll}
\log |\varphi_\v(z)|, & \mbox{ if } z\not\in K(g,\v);\\
0 &\mbox{ otherwise.}
\end{array}
\right.
$$

\subsection{Generalized polynomial-like maps}
We shall now recall the definition of generalized polynomial-like map over the mapping scheme $T$. We say $U$ is a {\em topological multi-disk} in $|T|\times \mathbb C$ if there exist topological disks $\{U_\v\}_{\v\in|T|}$ in $\C$ such that $U \cap (\{\v\}\times \mathbb C)=\{\v\}\times U_\v$.
\begin{definition}
An AGPL (almost generalized polynomial-like) map $g$ over the scheme $T$ is a map
\[g:U \to U', (\v,z)\mapsto (\sigma(\v),g_{\v}(z)),\]
with the following properties:
\begin{itemize}
\item $U, U'$ are two topological multi-disks in $|T|\times \mathbb C$ and $U_\v\subsetneq U'_\v$ for each $\v\in |T|$;
\item $g_{\v}: U_\v \to U'_{\sigma(\v)}$ is a proper map of degree $\delta(\v)$ for each $\v$;
\item The set $K(g):=\bigcap\limits_{n=0}^{\infty} g^{-n}(U)$, called the {\em filled-Julia set of $g$}, is compactly contained in $U$.
\end{itemize}
If in addition $U\Subset U'$, then we say that $g$ is a GPL (generalized polynomial-like) map.
\end{definition}

It should be noted that every AGPL map has a GPL restriction with the same filled-Julia set. See \cite[Lemma 2.4]{LY}.

Let $g_1, g_2$ be two AGPL maps over $T$. We say that they are {\em qc conjugate} if there is a fiberwise qc map $\varphi$ from a neighborhood of $K(g_1)$ onto a neighborhood of $K(g_2)$, sending the $\v$ fiber of $g_1$ to the $\v$ fiber of $g_2$, such that $\varphi\circ g_1=g_2\circ \varphi$ near $K(g_1)$. We say that they are {\em hybrid equivalent} if they are qc conjugate and we can choose $\varphi$ such that $\bar{\partial} \varphi=0$ a.e. on $K(g_1)$.

The Douady-Hubbard straightening theorem~\cite{DH2} extends in a straightforward way: every AGPL map $g$ is hybrid equivalent to a generalized polynomial $G$. In the case that the filled Julia set of $g$ is fiberwise connected, $G$ is determined up to affine conjugacy. For monic centered quadratic polynomials, each affine conjugacy class consists of a single polynomial. For more general maps, it is convenient to introduce an (external) marking to uniquely determine $G$ for a given $g$.

\begin{definition}[Access and external marking]Let $f: U \to U'$ be an AGPL map over the mapping scheme
$T$ with fiberwise connected filled Julia set. A path to $K(f)$ is a continuous map
$\gamma: [0, 1] \to U'$ such that $\gamma ((0, 1]) \subset U' \backslash K(f)$ and $\gamma(0) \in J(f)$. We say two paths $\gamma_0$ and $\gamma_1$
to $K(f)$ are homotopic if there exists a continuous map $\tilde \gamma : [0, 1] \times [0, 1] \to U$ such that
\begin{enumerate}
\item $t \mapsto \tilde \gamma (s, t)$ is a path to $K(f)$ for all $s \in [0, 1]$;
\item $\tilde \gamma (0, t) = \gamma_0(t)$ and $\tilde\gamma (1, t) = \gamma_1(t)$ for all $t \in [0, 1]$;
\item $\tilde \gamma (s, 0) = \gamma_0(0)$ for all $s \in [0, 1]$.
\end{enumerate}
An access to $K(f)$ is a homotopy class of paths to $K(f)$.\par
An external marking of $f$ is a collection $\Gamma=(\Gamma_{\v})_{\v \in |T|}$ where each $\Gamma_{\v}$ is an access to $K(f)$, contained in $\{\v\}\times \C$, such that $\Gamma$ is forward
invariant in the following sense. For every $\v \in |T|$ and every representative $\gamma_{\v} \subset (\{\v\}\times\C) \cap U$ of $\Gamma_{\v}$, the connected component of $f(\gamma_{\v})\cap U$ which intersects $J(f)$ is a representative of $\Gamma_{\sigma(\v)}$.
\end{definition}
For a generalized polynomial $g \in \mathcal C(T)$, there is a {\em standard external marking of $g$}, given by the external rays $(\mathcal R(\v,0))_{\v \in |T|}$ with angle $0$.

\begin{Theorem}[Straightening] Let $g$ be an AGPL map over $T$ with fiberwise connected filled Julia set and let $\Gamma$ be an external marking of $g$. There is a unique $f\in \mathcal{C}(T)$ such that there is a hybrid conjugacy between $g$ and $f$ which sends the external marking $\Gamma$ to the standard marking of $f$.
\end{Theorem}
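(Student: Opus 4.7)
The plan is to adapt the classical Douady--Hubbard straightening argument fiberwise, doing a quasiconformal surgery over the fundamental annuli and then solving a Beltrami equation. By the remark preceding the statement (Lemma 2.4 of LY), I may pass to a GPL restriction of $g$ and so assume $\overline{U_{\v}} \Subset U'_{\v}$ for every $\v \in |T|$. Existence then comes from a surgery, and uniqueness from a standard rigidity argument using B\"ottcher coordinates and Weyl's lemma.

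For existence, for each $\v \in |T|$ I would pick a quasiconformal homeomorphism $\Phi_{\v}\colon \hat{\C}\setminus U_{\v} \to \hat{\C}\setminus \overline{\mathbb{D}}$ with $\Phi_{\v}(\infty)=\infty$ and derivative $1$ at $\infty$, normalized so that a representative of the access $\Gamma_{\v}$ is mapped to a curve landing at $1 \in \partial \mathbb{D}$. The coherence of this choice across the scheme is guaranteed by the forward invariance built into the definition of an external marking. Define a fiberwise map $\widetilde{g}$ on $|T|\times \hat{\C}$ by letting $\widetilde{g}_{\v}$ agree with $g_{\v}$ on $U_{\v}$ and with $\Phi_{\sigma(\v)}^{-1}\bigl(\Phi_{\v}(z)^{\delta(\v)}\bigr)$ on $\hat{\C}\setminus U'_{\v}$, then interpolating by a quasiregular map of degree $\delta(\v)$ across the fundamental annulus $A_{\v} := U'_{\v}\setminus \overline{U_{\v}}$. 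The dilatation of $\widetilde{g}$ is bounded and supported in $\bigcup_{\v} A_{\v}$.

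Now pull back the standard complex structure under the iterates of $\widetilde{g}$ to produce a fiberwise Beltrami coefficient $\mu$ which vanishes on $K(\widetilde{g})$ and on the exterior of $\bigcup U'_{\v}$, is $\widetilde{g}$-invariant, and has uniformly bounded dilatation because any forward orbit of $\widetilde{g}$ visits the fundamental annuli at most once. Solve $\mu$ fiberwise via the measurable Riemann mapping theorem, normalizing each $h_{\v}\colon \hat{\C}\to \hat{\C}$ to fix $\infty$ tangentially to the identity. Then $f_{\v} := h_{\sigma(\v)} \circ \widetilde{g}_{\v} \circ h_{\v}^{-1}$ is holomorphic on $\hat{\C}$ with a single preimage of $\infty$ of multiplicity $\delta(\v)$, hence a monic centered polynomial of degree $\delta(\v)$, and by construction $h=(h_{\v})_{\v}$ is a hybrid conjugacy between $g$ and $f=(f_{\v})_{\v}\in \mathcal{C}(T)$. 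The normalization of the $\Phi_{\v}$ ensures that $h$ carries $\Gamma$ to the standard marking of $f$.

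For uniqueness, suppose $f^{(1)}, f^{(2)}\in \mathcal{C}(T)$ are two marked hybrid straightenings of $g$ with hybrid conjugacies $\psi^{(i)}$. The composition $\psi^{(2)}\circ (\psi^{(1)})^{-1}$ is a qc conjugacy between $f^{(1)}$ and $f^{(2)}$ on a neighborhood of each fiber's filled Julia set. Extend it fiberwise to the exterior of $K(f^{(1)},\v)$ by $\varphi_{\v, f^{(2)}}^{-1}\circ \varphi_{\v, f^{(1)}}$; the marking condition forces the two definitions to agree, producing a global fiberwise homeomorphism $\phi$ of $\hat{\C}$ that is quasiconformal, holomorphic off the filled Julia set, and satisfies $\bar{\partial}\phi=0$ on $K(f^{(1)})$ by the hybrid property. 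Weyl's lemma promotes $\phi$ to an automorphism of $\hat{\C}$ fixing $\infty$ with unit derivative, i.e.\ the identity, so $f^{(1)}=f^{(2)}$. The main obstacle is the bookkeeping for the markings: the exterior gluing maps $\Phi_{\v}$ and the B\"ottcher coordinates must be normalized so coherently that the forward invariance of $\Gamma$ is faithfully propagated through the semiconjugacy by $\sigma$.
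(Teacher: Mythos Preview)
The paper does not supply its own proof of this theorem; it simply cites \cite[Theorem~A]{IK}. Your sketch is the classical Douady--Hubbard straightening argument carried out fiberwise over the scheme $T$, and that is exactly the approach taken in the cited reference, so in spirit you are reproducing what the paper defers to.

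One technical point you gloss over in the uniqueness step: the hybrid conjugacy $\psi^{(2)}\circ(\psi^{(1)})^{-1}$ and the B\"ottcher extension $\varphi_{\v,f^{(2)}}^{-1}\circ\varphi_{\v,f^{(1)}}$ need not literally agree on $\partial K(f^{(1)},\v)$; the marking only guarantees they are homotopic rel $K$ on a fundamental annulus. The standard fix (as in Douady--Hubbard and in \cite{IK}) is to first interpolate across a thin annulus so the two pieces match on a Jordan curve, and then invoke Rickman's (or Bers') lemma to conclude the glued map is globally quasiconformal with $\bar\partial\phi=0$ a.e. Apart from this bookkeeping, which you correctly flag as the main obstacle, your outline is sound.
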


See \cite[Theorem A]{IK} for a proof.

\subsection{The Inou-Kiwi map}
Let $f_0, T$ and $r:|T|\to\mathbb{N}$ be as in \S\ref{subsec:GPL} and assume $f_0$ is primitive. It is well-known that $\partial \v$ is a Jordan curve for each $\v\in |T|$.
Let $$\mathcal{C}(f_0)=\{f\in \text{Poly}_d: \lambda(f)\supset \lambda(f_0)\},$$
where $\lambda(f)$ and $\lambda(f_0)$ are the rational laminations of $f$ and $f_0$ respectively.

For each $f\in \mathcal{C}(f_0)$, Inou-Kiwi constructed an AGPL (in fact, GPL) map
\begin{equation}\label{eqn:lambda0R}
F:\bigcup_{\v\in |T|} \{\v\}\times U_\v\to \bigcup_{\v\in |T|} \{\v\}\times U'_\v
\end{equation}
over $T$ such that $F(\v, z)=(\sigma(\v), f^{r(\v)}(z))$ for each $z\in U_\v$ and such that the filled Julia set of $F$ is the union of $\{\v\}\times K(\v,f)$:
$$K(\v,f)=\bigcap_{\theta\sim_{\lambda(f_0)} \theta'} \overline{S(\theta, \theta'; \v)}\cap K(f),$$
where $S(\theta,\theta';\v)$ is the component of $\C\setminus (\overline{\mathcal{R}_f(\theta)\cup \mathcal{R}_f(\theta')})$ which contains external rays $\mathcal{R}_f(t)$ for which $\mathcal{R}_{f_0}(t)$ land on $\partial \v$.
We shall call such an AGPL map $F$ as in (\ref{eqn:lambda0R}) a {\em $\lambda(f_0)$-renormalization of } $f$.
While there are many choices of $U_\v$ and $U'_\v$, the hybrid class of $\lambda(f_0)$-renormalizations of $f$ is uniquely determined.

In order to choose an external marking for $F$, Inou-Kiwi first fixed an {\em internal angle system} which is, by definition, a collection of homeomorphisms $\alpha=(\alpha_{\v}:\partial \v \to \mathbb R/\mathbb Z)_{\v\in |T|}$ such that:
\[\delta(\v)\alpha_{\v}=\alpha_{\sigma(\v)}\circ f_0^{r(\v)}\mod 1.\]
An internal angle system is uniquely determined by the points $z_\v=\alpha_{\v}^{-1}(0)$, $\v\in |T|$, which are (pre-)periodic points of $f_0$. Choose for each $\v\in |T|$ an external angle $\theta_\v$ so that $\mathcal{R}_{f_0}(\theta_\v)$ lands at $z_\v$. They observed that the external rays $\mathcal{R}_f(\theta_\v)$ define an external marking of $F$ and this external marking is independent of the choices of $\theta_\v$.
Indeed, we can choose $(\theta_\v)_{\v\in |T|}$ such that $\delta_{\v}\theta_\v=\theta_{\sigma(\v)}\mod 1$ for each $\v\in |T|$, see Lemma~\ref{lem:externalangle}.

The IK straightening map $\chi:\mathcal{C}(f_0)\to \mathcal{C}(T)$ is defined as follows. For each $f\in \mathcal{C}(f_0)$, $\chi(f)$ is the generalized polynomial in $\mathcal{C}(T)$ for which there is a hybrid conjugacy from a $\lambda(f_0)$-renormalization of $f$ to $\chi(f)$ sending the external marking determined by the internal angle system to the standard marking for $\chi(f)$.

\begin{Lemma}\label{lem:externalangle} Let $f_0$ be as above and let $\alpha$ be an internal angle system. Then there exists $\theta_\v\in \Q/\Z$, $\v\in |T|$, such that $\mathcal{R}_{f_0}(\theta_\v)$ lands at $\alpha_\v^{-1}(0)$ and such that $f_0^{r(\v)}(\mathcal{R}_{f_0}(\theta_\v))=\mathcal{R}_{f_0}(\theta_{\sigma(\v)})$.
\end{Lemma}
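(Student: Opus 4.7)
The plan is to build the angles in two stages: first for $\sigma$-periodic elements, then for pre-periodic ones by induction on the pre-period. Unraveling the compatibility $\delta(\v)\alpha_{\v}=\alpha_{\sigma(\v)}\circ f_0^{r(\v)}$ at $0\in\R/\Z$ gives $f_0^{r(\v)}(z_{\v})=z_{\sigma(\v)}$, where $z_{\v}:=\alpha_{\v}^{-1}(0)$; since $|T|$ is finite, every $z_{\v}$ is eventually periodic under $f_0$. Hyperbolicity of $f_0$ together with $z_{\v}\in\partial\v\subset J(f_0)$ forces the forward orbit of $z_{\v}$ to avoid $P(f_0)$, so $f_0^{r(\v)}$ is a local homeomorphism at $z_{\v}$ and the periodic $z_{\v}$ are repelling.

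Fix a cycle $\v_0\to\v_1\to\cdots\to\v_{k-1}\to\v_0$ of $\sigma$ and set $p=\sum_{i=0}^{k-1}r(\v_i)$, so $\v_0$ is an $f_0^p$-invariant Fatou component and $z_{\v_0}\in\partial\v_0$ is a repelling fixed point of $f_0^p$. The key assertion I aim to prove is that \emph{every external ray of $f_0$ landing at $z_{\v_0}$ is fixed by $f_0^p$}. Since $f_0$ is primitive, $\partial\v_0$ is a Jordan curve, and $f_0^p\colon\overline{\v_0}\to\overline{\v_0}$ is a proper holomorphic self-map preserving the boundary orientation of $\partial\v_0$; in particular, as an orientation-preserving local homeomorphism at $z_{\v_0}$, it fixes the two germs of $\partial\v_0$ at $z_{\v_0}$ rather than swapping them. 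The finitely many external rays landing at $z_{\v_0}$ all approach from the $\C\setminus\overline{\v_0}$ side and inherit a cyclic order compatible with the cyclic order of their angles in $\R/\Z$; an orientation-preserving bijection of this finite ordered arc fixing its two endpoint germs must therefore be the identity. Granting this, pick any $\theta_{\v_0}\in\Q/\Z$ with $\mathcal{R}_{f_0}(\theta_{\v_0})$ landing at $z_{\v_0}$ and set $\theta_{\v_i}:=d^{r(\v_0)+\cdots+r(\v_{i-1})}\theta_{\v_0}$; the cycle closes up precisely because $d^p\theta_{\v_0}\equiv\theta_{\v_0}\pmod 1$.

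For pre-periodic $\v$ I would induct on the pre-period. Given $\theta_{\sigma(\v)}$ with $\mathcal{R}_{f_0}(\theta_{\sigma(\v)})$ landing at $z_{\sigma(\v)}$, the local homeomorphism $f_0^{r(\v)}$ at $z_{\v}$ pulls back the germ of $\mathcal{R}_{f_0}(\theta_{\sigma(\v)})$ at $z_{\sigma(\v)}$ to a germ at $z_{\v}$, which extends to exactly one of the $d^{r(\v)}$ preimage rays $\mathcal{R}_{f_0}((\theta_{\sigma(\v)}+j)/d^{r(\v)})$; this choice of $j$ determines $\theta_{\v}$ uniquely with the required properties. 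The main obstacle is the cyclic-order argument in the periodic case: one must use primitivity in an essential way to secure the Jordan property of $\partial\v_0$, and then carefully combine orientation-preservation of $f_0^p$ with the fact that landing rays at a repelling periodic point carry a geometric cyclic order matching the arithmetic cyclic order of their angles, in order to rule out a nonzero rotation number in the induced permutation of rays. Everything else is bookkeeping.
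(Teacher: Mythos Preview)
Your proposal is correct and follows the same overall plan as the paper: reduce to the $\sigma$-periodic case, show that every external ray landing at a periodic $z_{\v_0}$ is fixed by the first-return map $f_0^{p}$, then propagate along the cycle and pull back to preperiodic $\v$. The difference lies only in how the fixed-ray claim is established. The paper argues globally: writing $\Gamma$ for the union of all rays landing at $p=z_{\v_0}$ together with $\{p\}$, it takes the component $V$ of $\C\setminus\Gamma$ containing the invariant Fatou component $W=\v_0$ and the component $V_1\subset V$ of $\C\setminus f^{-1}(\Gamma)$ containing $W$; since $f:V_1\to V$ is proper with $\partial V\subset\partial V_1$, the two rays bounding $V$ are $f$-fixed, and then all rays landing at $p$ are fixed because they share a common period under $m_d$. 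Your argument is local: the two germs of the Jordan curve $\partial\v_0$ at $z_{\v_0}$ bracket the external-ray germs into a finite \emph{linearly} ordered arc on the outside of $\v_0$, and an orientation-preserving self-bijection fixing both bracket germs is the identity on that arc. Your route is arguably more transparent but relies on the Jordan property of $\partial\v_0$; note that this holds for every bounded Fatou component of a postcritically finite hyperbolic polynomial, so primitivity is not actually essential here, contrary to your closing remark. The paper's sector argument avoids invoking the Jordan property at all and yields the claim for any $f\in\mathcal C_d$ with a fixed Fatou component.
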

\begin{proof} {\bf Claim.} Let $W$ be a fixed Fatou component of $f\in \mathcal{C}_d$ and let $p$ be a repelling fixed point of $f$ in $\partial U$. If the external ray $\mathcal{R}_f(t)$ lands at $p$, then $dt=t\mod 1$.

Let $\Theta\subset \mathbb{R}/\mathbb{Z}$ denote the set of external angles $\theta$ for which $\mathcal{R}_f(\theta)$ lands at $p$. It is well known that $m_d: t\mapsto dt\mod 1$ maps $\Theta$ onto itself and preserves the cyclic order. So all the angles $\theta\in \Theta$ has the same period under $m_d$. We may certainly assume that $\Theta$ contains at least two points.  Let $\Gamma$ denote the union of these external rays and $\{p\}$. Let $V$ denote the component of $\C\setminus \Gamma$ which contains $W$ and let $V_1$ denote the component of $\C\setminus f^{-1}(\Gamma)$ which contains $W$. Then $\partial V\subset \partial V_1$ and $f:V_1\to V$ is a proper map.  It follows that $f$ must fix both of the external rays in $\partial V$.  This proves the claim.

Now for each $\v\in |T|$, we have
$f^{r(\v)}(\alpha_{\v}^{-1}(0))=\alpha_{\sigma(\v)}^{-1}(0)$. So each $\alpha_{\v}^{-1}(0)$ eventually lands at a repelling periodic orbit. The claim enables us to choose $\theta(\v)\in \mathbb{R}/\mathbb{Z}$ such that $\delta_\v\theta_\v=\theta_{\sigma(\v)}\mod 1$ for each $\v\in |T|$.
\end{proof}

\section{Yoccoz puzzle}\label{sec:puzzle}
Let $f_0$ be a monic centered postcritically finite hyperbolic and primitive polynomial of degree $d\ge 2$. In this section, we shall construct a specific Yoccoz puzzle for $f_0$ (Theorem~\ref{thm:puzzle}).
Recall that $\Poly_d$ denotes the collection of monic centered polynomials of degree $d$.

We say that a finite set $Z$ is {\em $f_0$-admissible} if the following hold:
\begin{itemize}
\item $f_0(Z)\subset Z$,
\item each periodic point in $Z$ is repelling;
\item for each $z\in Z$,  there exist at least two external rays landing at $z$.
\end{itemize}
Let $\Gamma_0=\Gamma^Z_0$ denote the union of all the external rays landing on $Z$, the set $Z$ itself and the equipotential $\{G_{f_0}(z)=1\}$.
For each $n\ge 1$, define $\Gamma^Z_n=f_0^{-n}(\Gamma^Z_0)$.
A bounded component of $\C\setminus \Gamma^Z_n$ is called a {\em $Z$-puzzle piece} of depth $n$.

The aim of this section is to prove the following:

\begin{Theorem}\label{thm:puzzle} Let $f_0$ be a monic centered polynomial of degree $d\ge 2$ which is postcritically finite, hyperbolic and primitive. Assume that $f_0(z)\not=z^d$. Then there exists an $f_0$-admissible  finite set $Z$ such that for each (finite) critical point $c$ of $f_0$, if $Y_n(c)$ denotes the $Z$-puzzle piece of depth $n$ which contains $c$ and $U(c)$ denotes the Fatou component containing $c$, then $\bigcap_{n=0}^\infty Y_n(c)=\overline{U(c)}$.
\end{Theorem}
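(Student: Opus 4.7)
The plan is to build $Z$ as the union of the forward $f_{0}$-orbits of biaccessible repelling periodic points chosen on the boundary of each critical Fatou component. Since $f_{0}$ is postcritically finite and hyperbolic, $J(f_{0})$ is locally connected by Douady--Hubbard, and each bounded Fatou component $U$ is a Jordan domain on whose boundary $f_{0}^{p}$ is semiconjugate to $z\mapsto z^{\delta}$ on $S^{1}$; in particular repelling periodic points of $f_{0}$ are dense on each $\partial U$. The hypothesis $f_{0}\neq z^{d}$, combined with primitivity, prevents $J(f_{0})$ from being a single Jordan curve, and by inspecting the rational lamination of $f_{0}$ one verifies that biaccessible repelling periodic points on $\partial U(c)$ are dense for every critical Fatou component $U(c)$.

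For each critical $c$, I choose finitely many biaccessible repelling $f_{0}$-periodic points $w_{1}^{c},\dots,w_{n_{c}}^{c}\in\partial U(c)$, distributed densely on $\partial U(c)$; set $Z_{0}=\{w_{i}^{c}\}$ and $Z:=\bigcup_{k\geq 0}f_{0}^{k}(Z_{0})$. Since the external angles of the rays landing at an $f_{0}$-periodic point are of the form $p/(d^{q}-1)$, whose denominators are coprime to $d$, biaccessibility is preserved under $f_{0}$, so every point of $Z$ is a repelling periodic biaccessible point. Thus $Z$ is $f_{0}$-admissible.

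The inclusion $\bigcap_{n}Y_{n}(c)\supseteq \overline{U(c)}$ is immediate since $U(c)$ is open, connected and disjoint from every $\Gamma_{n}^{Z}$. For the reverse inclusion, let $p$ denote the period of $U(c)$ and $\delta$ the local degree of $f_{0}^{p}$ at $c$: the map $f_{0}^{p}\colon Y_{(k+1)p}(c)\to Y_{kp}(c)$ is a proper branched cover of degree $\delta$ branched only at $c$, hence an unramified cover of the annulus $A_{k}:=Y_{kp}(c)\setminus\overline{Y_{(k+1)p}(c)}$ onto $A_{k-1}$, which yields $\mathrm{mod}(A_{k})=\mathrm{mod}(A_{0})/\delta^{k}$ and geometric shrinking of the $A_{k}$. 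By choosing the $w_{i}^{c}$ densely enough, $Y_{0}(c)$ is confined to a small neighborhood of $\overline{U(c)}$ and avoids $\overline{U(c')}$ for every other critical component $c'$ (using primitivity to keep their closures disjoint). Combining the moduli decay with the expansion of $f_{0}$ in an orbifold metric on a neighborhood of $J(f_{0})$ (available because $f_{0}$ is postcritically finite and hyperbolic) and with the density of $\bigcup_{n}f_{0}^{-n}(Z)$ in $J(f_{0})$ then rules out any residual point of $\bigcap_{n}Y_{n}(c)\setminus\overline{U(c)}$, giving the equality. The main obstacle is precisely this last shrinking step: the moduli estimate alone gives only a finite total modulus and does not force collapse to $\overline{U(c)}$, so one must combine it with hyperbolic expansion and a careful density of the biaccessible periodic points on each $\partial U(c)$ to rule out any non-$\overline{U(c)}$ residue.
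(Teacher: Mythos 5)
Your choice of $Z$ is fatally placed. If $Z$ contains a point $z_0\in\partial U(c)$, then $z_0\in\Gamma_n^Z$ for every $n$, so $z_0$ lies in no puzzle piece and in particular $z_0\notin\bigcap_n Y_n(c)$, while $z_0\in\overline{U(c)}$; the desired equality (and even the inclusion $\supseteq$ you call ``immediate'') is then false. For $\overline{U(c)}\subset Y_n(c)$ one needs $\partial U(c)\cap f_0^{-n}(Z)=\emptyset$ for all $n$, i.e.\ $Z$ must be \emph{buried} (disjoint from the boundary of every bounded Fatou component, which is preserved under taking preimages since $f_0$ maps Fatou boundaries to Fatou boundaries). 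Producing buried biaccessible repelling periodic points is precisely the hard step that the paper isolates as Lemma~\ref{lem:puzzle}: it is proved by an induction on the number of attracting cycles, via a Yoccoz puzzle at a biaccessible fixed point, thickening to polynomial-like maps, and straightening; primitivity enters crucially to exclude the degenerate subcase where the renormalizations are all conjugate to $z\mapsto z^{d_j}$. Your proposal replaces this with points on $\partial U(c)$, which is exactly what must be avoided.

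The second gap is the one you flag yourself: the shrinking of $\bigcap_n Y_n(c)$ down to $\overline{U(c)}$ does not follow from the moduli computation (the moduli of the nested annuli decay geometrically, so their sum converges and Gr\"otzsch gives nothing), and hyperbolic expansion is unavailable on the critical nest because $Y_n(c)$ contains postcritical points. Indeed, for a single buried admissible $Z$ the equality can genuinely fail: $\bigcap_n Y_n(c)$ is the filled Julia set of a polynomial-like restriction, which may be strictly larger than $\overline{U(c)}$. The paper's mechanism is different: it introduces a well-founded refinement order on buried admissible sets (governed by the landing time $s_Z(c)$ and the asymptotic degree $d_Z(c)$), and shows that whenever $\bigcap_n Y_n^Z(c_0)\supsetneq\overline{U(c_0)}$, the straightened polynomial $P$ of the polynomial-like restriction satisfies $P(z)\ne z^D$, hence by Lemma~\ref{lem:puzzle} has a buried biaccessible repelling periodic point, whose pullback (buried and biaccessible for $f_0$ by the cited transfer results) enlarges $Z$ to a proper refinement. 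Termination of the refinement process then yields the theorem. Without both the buried-point lemma and this refinement loop, the argument does not close.
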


We say a point $a\in J(f_0)$ is {\em bi-accessible} if it is the common landing points of two distinct external rays.
A point in $J(f_0)$ is called {\em buried} if it is not in the boundary of any bounded Fatou componnet.
We shall need the following lemmas to find buried bi-accessible periodic points.

\begin{Lemma}\label{lem:puuzle-1} Let $f_0\in \text{Poly}_d$ be a postcritically finite hyperbolic polynomial with $f_0(z)\not=z^d$, where $d\ge 2$.  Then any bi-accessible point in the boundary of a bounded Fatou component is eventually periodic.
\end{Lemma}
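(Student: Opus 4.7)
The plan is to argue by contradiction. Suppose $a\in \partial U$ is bi-accessible by two distinct external rays $\mathcal{R}_{f_0}(t_1), \mathcal{R}_{f_0}(t_2)$, and suppose $a$ is not eventually periodic under $f_0$. I aim to show that the internal angle $\sigma:=\alpha_U(a)\in \mathbb{R}/\mathbb{Z}$ (defined below) must then be rational, contradicting non-pre-periodicity.

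First I observe that the two landing angles cannot collapse under iteration: $d^n t_1 \not\equiv d^n t_2 \pmod 1$ for every $n \ge 0$. For if $n \ge 1$ were minimal with $d^n t_1 \equiv d^n t_2 \pmod 1$, then $f_0^{n-1}(a)$ would be the common landing point of two distinct preimage rays of $\mathcal{R}_{f_0}(d^n t_1)$, forcing $f_0^{n-1}(a)$ to be a critical point of $f_0$ (a local homeomorphism cannot collapse two distinct curves). But every critical point of the postcritically finite hyperbolic polynomial $f_0$ lies inside a bounded Fatou component, while $f_0^{n-1}(a)\in J(f_0)$ since it lies on the boundary of the bounded Fatou component $f_0^{n-1}(U)$. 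Hence every $f_0^n(a)$ remains bi-accessible via rays of distinct angles.

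Next I reduce to the case where $U$ is periodic. Since $f_0$ is postcritically finite, $U$ is eventually periodic; replacing $(a,U)$ with $(f_0^N(a), f_0^N(U))$ for $N$ large, I may assume $U$ has period $q$ and $\delta:=\deg(f_0^q|_U)\ge 2$ (the latter by hyperbolicity, since $U$ is superattracting). Because $\partial U$ is a Jordan curve for postcritically finite hyperbolic polynomials, the Riemann map $\mathbb D\to U$ extends to a homeomorphism $\overline{\mathbb D}\to \overline U$, giving an internal angle $\alpha_U:\partial U\to \mathbb R/\mathbb Z$ that conjugates $f_0^q|_{\partial U}$ to $m_\delta:\theta\mapsto \delta\theta\pmod 1$. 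The non-pre-periodicity of $a$ under $f_0^q$ translates to $\sigma\notin \mathbb Q/\mathbb Z$.

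The crux is to show that every bi-accessible internal angle on $\partial U$ must lie in $\mathbb Q/\mathbb Z$. Let $E_U\subset \mathbb R/\mathbb Z$ be the closed set of external angles whose rays land on $\partial U$. Since $J(f_0)$ is locally connected, the B\"ottcher map $\phi_{f_0}^{-1}$ extends continuously to the unit circle, defining a landing map $\phi^*:\mathbb R/\mathbb Z\to J(f_0)$. The composition $p:=\alpha_U\circ \phi^*|_{E_U}$ is a continuous, cyclic-order-preserving semi-conjugacy from $m_{d^q}|_{E_U}$ to $m_\delta|_{\mathbb R/\mathbb Z}$, and bi-accessible points on $\partial U$ correspond bijectively to gaps (maximal complementary arcs) of $E_U$ in $S^1$. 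By the invariant-lamination theory for postcritically finite polynomials, each gap of $E_U$ is eventually periodic under $m_{d^q}$, pulled back from one of finitely many periodic gaps associated with the orbit portraits of repelling periodic points on $\partial U$. In particular every gap endpoint is rational, and hence so is every bi-accessible internal angle, contradicting $\sigma\notin \mathbb Q/\mathbb Z$. The main obstacle is precisely this combinatorial input --- the finite generation of the gap structure of $E_U$. It encodes the rigidity of the landing pattern of a postcritically finite polynomial and, while standard in the orbit portrait / invariant lamination literature, is not entirely elementary; in writing up I would cite it as a known result rather than reproving it from scratch.
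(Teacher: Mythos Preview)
Your reduction to the gap structure of $E_U$ is correct in outline, but the step you yourself flag as the ``main obstacle'' is essentially the entire lemma: a gap of $E_U$ \emph{is} a bi-accessible point of $\partial U$, so asserting that every gap is eventually periodic under $m_{d^q}$ is precisely the statement to be proved. Orbit-portrait theory runs in the opposite direction (it describes the rays landing at a \emph{given} periodic point, not which bi-accessible points must be periodic), and while the finiteness you want can indeed be extracted from Hubbard-tree or Thurston-lamination machinery for postcritically finite maps, this is not a one-line citation and is at least as deep as the lemma itself. As written, the proposal defers its only substantive step to an unspecified external reference.

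The paper's argument is entirely different and self-contained. Assuming $U$ is fixed and $a_0\in\partial U$ is bi-accessible via $\mathcal{R}_{f_0}(t),\mathcal{R}_{f_0}(t')$ but not eventually periodic, the ray-pair $\Gamma_n=\mathcal{R}_{f_0}(d^n t)\cup\{f_0^n(a_0)\}\cup\mathcal{R}_{f_0}(d^n t')$ cuts $\C$ into two pieces; let $W_n$ be the one \emph{not} containing $U$. Since the points $a_n=f_0^n(a_0)$ are pairwise distinct, the curves $\partial W_n$ are pairwise disjoint and hence so are the $W_n$; thus for $n\ge n_0$ each $W_n$ misses the critical set. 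A short lifting argument then shows $f_0(W_n)=W_{n+1}$ for $n\ge n_0$, so $W_{n_0}$ is a wandering open set --- contradicting Sullivan's theorem, or more elementarily the fact that any rational external ray placed inside $W_{n_0}$ must be eventually periodic. This geometric wandering-domain argument avoids lamination theory altogether and is what you should aim for.
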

\begin{proof} Arguing by contradiction, assume that there exists a bi-accessible point $a_0$ which is in the boundary of a bounded Fatou component $U$ and such that $a_0$ is not eventually periodic. By Sullivan's no wandering theorem, $U$ is eventually periodic. So it suffices to consider the case that $U$ is fixed by $f_0$.

Let $t, t'\in \R/\Z$, $t\not=t'$, be such that $\mathcal{R}_{f_0}(t)$ and $\mathcal{R}_{f_0}(t')$ land at $a_0$. For each $n\ge 1$, let $a_n:=f_0^n(a_0)$ and let $t_n=d^n t$, $t'_n=d^n t'$. Then $a_n$ are pairwise distinct, $t_n, t'_n\not\in \Q/\Z$ and $t_n\not=t'_n$ for all $n\ge 0$. Let $\Gamma_n=\mathcal{R}_{f_0}(t_n)\cup \mathcal{R}_{f_0}(t'_n)\cup \{a_n\}$ and let $W_n$ and $W'_n$ be the components of $\C\setminus\Gamma_n$ so that $W'_n\supset U$
and $W_n\cap U=\emptyset$. Note that $\overline{W_n}\cap \overline{U}=\{a_n\}$ for each $n\ge 0$.
Since $\partial W_n$, $n\ge 0$, are pairwise disjoint, $W_n$, $n\ge 0$, are pairwise disjoint. So there exists $n_0$ such that for all $n\ge n_0$, $W_n$ contains no critical point.

{\em Claim.} If $W_n$ contains no critical point, then $f_0(W_n)=W_{n+1}$.

To see this, let $\widehat{\Gamma}_n=f_0^{-1}(\Gamma_{n+1})$ which is a finite union of simple curves stretching to infinity on both sides. Let $V_{n}$ (resp. $V'_n$) denote the component of $\C\setminus \widehat{\Gamma}_n$ which contains $a_n$ in its boundary and such that $V_n\subset W_n$ and $\partial W_n\subset \partial V_n $ (resp. $V'_n\subset W'_n$ and  $\partial W'_n\subset \partial V'_n $).
Then $f_0(V_n)$ and $f_0(V'_n)$  are distinct components of $\C\setminus \Gamma_{n+1}$. Since $V'_n\supset U$, we have $f_0(V'_n)=W'_{n+1}$ and hence $f_0(V_n)=W_{n+1}$. Since $W_n$ contains no critical point, $f_0:V_n\to W_{n+1}$ is a conformal map, which implies that $\partial V_{n}$ consists of one component of $\widehat{\Gamma}_n$. Thus $\partial V_n=\partial W_n$, hence $W_n=V_n$, $f_0(W_n)=W_{n+1}$.

Thus, for all $n\ge n_0$, $f_0(W_n)=W_{n+1}$. It follows that $f^n_0(W_{n_0})=W_{n+n_0}$ for all $n\ge 0$. So $W_{n_0}$ is a wandering domain, which contradicts Sullivan's no wandering theorem. A more elementary argument is as follows: We can choose $\theta\in \mathbb Q/\mathbb Z$ such that $\mathcal R_{f_0}(\theta) \subset W_{n_0}$. As $\mathcal R_{f_0}(\theta)$ is eventually periodic, $W_{n_0}$ cannot be a wandering domain.
\end{proof}
\begin{Lemma}\label{lem:puzzle0}
Let $f_0\in \text{Poly}_d$ be a postcritically finite hyperbolic polynomial with $f_0(z)\not=z^d$, where $d\ge 2$.
Then $f_0$ has a bi-accessible repelling periodic point.
\end{Lemma}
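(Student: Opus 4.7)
The plan has three stages: (i) produce a cut point of $K(f_0)$ on the boundary of some bounded Fatou component; (ii) apply Lemma~\ref{lem:puuzle-1} to deduce it is eventually periodic; (iii) iterate carefully to obtain a bi-accessible periodic point.

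For (i), I would first observe that $K(f_0)$ cannot be homeomorphic to a closed disk. If it were, then $f_0|_{K(f_0)}$ would be a degree-$d$ proper branched self-cover of a topological disk; uniformization of the basin of infinity via the Böttcher coordinate, together with the monic-centered normalization of $f_0$, would then force $f_0(z)=z^d$, contradicting the hypothesis. Hence for every bounded Fatou component $U$ of $f_0$, $\overline U$ is a proper subset of $K(f_0)$. By primitivity, $\overline U$ is a closed Jordan disk disjoint from the closures of all other bounded Fatou components. The connectedness of $K(f_0)$ then forces some connected component $V$ of $K(f_0)\setminus \overline U$ to have closure meeting $\partial U$, and any $a_0 \in \overline V \cap \partial U$ is a cut point of $K(f_0)$. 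By the local connectedness of $J(f_0)$ (which follows from postcritical finiteness and hyperbolicity), cut points of $K(f_0)$ are exactly the landing points of at least two distinct external rays, so $a_0$ is bi-accessible.

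For (ii), Lemma~\ref{lem:puuzle-1} gives that $a_0$ is eventually periodic under $f_0$, and hyperbolicity ensures that its eventual periodic orbit is repelling. The two landing angles $t_1 \neq t_2$ at $a_0$ are necessarily rational, since their forward orbit under $m_d$ is finite.

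For (iii), set $a_n := f_0^n(a_0)$; for $n$ sufficiently large, $a_n$ is a repelling periodic point. If $m_d^n(t_1) \neq m_d^n(t_2)$ for every $n \ge 0$, then $a_n$ is bi-accessible for every $n$, which delivers the desired point. The only obstruction is $t_1 - t_2 \in \tfrac{1}{d^N}\Z$ for some $N$, in which case the two rays fuse under the $N$-th iterate. To sidestep this, I plan to select the initial cut point so that the difference $t_1-t_2$, in lowest terms, has a denominator carrying a prime factor not dividing $d$; such a pair can never be collapsed by any iterate of $m_d$. The main obstacle is the verification that such a cut point exists among the infinitely many cut points of $K(f_0)$: this should follow from the non-triviality and richness of the rational lamination $\lambda(f_0)$ (again a consequence of $f_0\neq z^d$) combined with primitivity, which rules out the degenerate case in which every bi-accessible point on a Fatou boundary has two landing angles differing by a $d$-adic rational.
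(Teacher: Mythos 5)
Your overall strategy coincides with the paper's: produce a bi-accessible point on the boundary of a bounded Fatou component, invoke Lemma~\ref{lem:puuzle-1} to see it is eventually periodic, and pass to the periodic part of its orbit. The two proofs diverge in how the first step is carried out, and your version has a genuine gap there. You claim that \emph{any} $a_0\in\overline V\cap\partial U$, where $V$ is a component of $K(f_0)\setminus\overline U$, is a cut point of $K(f_0)$. This does not follow from what you have said: $a_0$ disconnects $K(f_0)$ only if $V$ is relatively clopen in $K(f_0)\setminus\{a_0\}$, which requires $\overline V\cap\partial U=\{a_0\}$. That $\overline V$ meets $\partial U$ in exactly one point is true for a full, locally connected continuum, but it is a nontrivial plane-topology fact (one must rule out, using fullness, a component that attaches to $\partial U$ at two or more points, in which case no single attachment point need be a cut point), and it is essentially of the same depth as the statement you are trying to prove. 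The paper sidesteps this entirely: it restricts the Carath\'eodory loop $\pi:\R/\Z\to J(f_0)$ to $\Lambda=\pi^{-1}(\partial U)$, notes that $\Lambda$ is a proper compact subset of the circle (because $J(f_0)$ is not a Jordan curve when $f_0\neq z^d$) surjecting onto the Jordan curve $\partial U$, and concludes that $\pi|_\Lambda$ cannot be injective; non-injectivity of $\pi$ at a point is exactly bi-accessibility. If you want to keep a topological formulation, you should either prove the singleton property of $\overline V\cap\partial U$ or switch to this counting-of-angles argument.

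Your stage (iii) is also incomplete by your own admission, but here the difficulty you are trying to circumvent does not exist. If two distinct rays $\mathcal R(t_1)$, $\mathcal R(t_2)$ land at $a_n\in J(f_0)$ and $a_n$ is not a critical point, then $f_0$ is injective on a neighborhood of $a_n$ and maps terminal segments of these rays onto terminal segments of $\mathcal R(dt_1)$ and $\mathcal R(dt_2)$; if $dt_1=dt_2$ these images would be two disjoint terminal segments of one and the same ray, which is impossible. Since $f_0$ is hyperbolic, every critical point lies in the Fatou set, so no $a_n$ is critical, and hence $m_d^n(t_1)\neq m_d^n(t_2)$ for all $n$ automatically: bi-accessibility propagates along the entire forward orbit, and the periodic point in the orbit of $a_0$ is bi-accessible (and repelling, by hyperbolicity). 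In particular, the elaborate selection of a cut point whose angle difference has a denominator with a prime factor not dividing $d$ --- the step you flag as the ``main obstacle'' and leave unverified --- is unnecessary and should be deleted in favor of the local-injectivity observation above.
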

\begin{proof}
Without loss of generality, we assume that $f_0$ has a fixed bounded Fatou component $U$, and
let $$\Lambda=\{t\in \R/\Z: \mathcal{R}_{f_0}(t) \textrm{ lands on } \partial U\}.$$
Since the Julia set of $f_0$ is locally connected, by the Caratheodory Theorem, there is a continuous surjective map $\pi:\R/\Z\to J(f)$ such that
the external ray $\mathcal{R}_{f_0}(t)$ lands at $\pi(t)$ and hence $\pi\circ m_d=f_0\circ \pi$, where $m_d: \R/\Z\to \R/\Z$, $t\mapsto d t\mod 1$. In particular, $\Lambda=\pi^{-1}(\partial U)$ is a non-empty compact subset of $\R/\Z$ which is forward invariant under the map $m_d$. Since $f_0(z)\not=z^d$, $J(f)$ is not a Jordan curve, so $\Lambda$ is a proper subset of $\R/\Z$. Thus $\pi: \Lambda\to \partial U$ is not a homeomorphism. Since $\pi: \Lambda\to \partial U$ is continuous and surjective, this map is not injective. In other words, there exists a bi-accessible point $w\in \partial U$. By Lemma~\ref{lem:puuzle-1}, $w$ is eventually periodic. Any periodic point in the orbit of $w$ is a bi-accessible repelling periodic point.
\end{proof}
\begin{Lemma}\label{lem:puzzle} If $f_0\in Poly_d$ is postcritically finite, hyperbolic and primitive and $f_0(z)\not=z^d$, then $f_0$ has a buried biaccessible repelling periodic point.
\end{Lemma}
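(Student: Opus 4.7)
The plan is to apply Lemma~\ref{lem:puzzle0} to get a bi-accessible repelling periodic point on a Fatou boundary, use primitivity to isolate one side of the rays landing at this point, produce a second bi-accessible repelling periodic point there, and iterate finitely many times using that $f_0$ has only finitely many periodic bounded Fatou components.

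By Lemma~\ref{lem:puzzle0} there is a bi-accessible repelling periodic point $w$; inspection of the proof places $w\in\partial U_0$ for some periodic bounded Fatou component $U_0$, and by primitivity $U_0$ is the \emph{unique} bounded Fatou component whose boundary contains $w$. Pick two rays $R(t),R(t')$ landing at $w$, and let $W$ be the component of $\mathbb{C}\setminus (R(t)\cup R(t')\cup\{w\})$ not containing $U_0$. By a primitivity-plus-connectedness argument modelled on the proof of Lemma~\ref{lem:puuzle-1}, every bounded Fatou component $V$ meeting $W$ satisfies $\overline V\subset W$: $\overline V$ is connected, disjoint from the rays (which lie in $B_{f_0}(\infty)$) and from $\{w\}$, since $\overline V\cap\overline{U_0}=\emptyset$ by primitivity.

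The next step is to find a bi-accessible repelling periodic point $w^\ast\in W$. The natural candidates come from the orbit portrait of $w$: writing $p=\mathrm{per}(w)$, the open arc $I=\{s\in\mathbb{R}/\mathbb{Z}:R_{f_0}(s)\subset W\}$ is mapped expandingly across itself by $m_d^p$, and the hypothesis $f_0\ne z^d$ forces $\lambda(f_0)$ to identify periodic pairs of angles inside $I$. For instance, if $W$ contains a bounded periodic Fatou component $V$, its ``root'' provides such a pair; and in the ``pure dendrite'' case where $W$ contains no bounded Fatou component, the dendritic branch points inside $W\cap J(f_0)$ furnish periodic pairs. Any such pair $\{s_1,s_2\}\subset I$ lands at a bi-accessible repelling periodic point $w^\ast\in W$.

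If $w^\ast$ is buried, done. Otherwise $w^\ast\in\partial V_1$ for a bounded Fatou component $V_1$, which by primitivity must itself be periodic: writing $q=\mathrm{per}(w^\ast)$, we have $w^\ast\in\overline{V_1}\cap\overline{f_0^q(V_1)}$, and primitivity forces $V_1=f_0^q(V_1)$. Moreover $V_1\subset W$, so $V_1\ne U_0$. Iterating the construction with $(w^\ast,V_1)$ in place of $(w,U_0)$ excludes a further periodic bounded Fatou component at each step; since $f_0$ has only finitely many periodic bounded Fatou components, the recursion terminates in finitely many steps with a buried bi-accessible repelling periodic point. The main obstacle is the third step: rigorously producing a periodic $\lambda(f_0)$-equivalent pair of angles inside $I$, which requires combinatorial input from orbit-portrait theory and the structure of the rational lamination; the outer induction is then a formal consequence of primitivity and postcritical finiteness.
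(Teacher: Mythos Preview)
Your approach is genuinely different from the paper's, and more elementary in spirit: the paper proceeds by induction on the number of attracting periodic points, builds a Yoccoz puzzle from the first bi-accessible point, analyses the polynomial-like first return maps to the puzzle pieces touching that point, and uses a count of fixed points versus fixed external rays (via the Claim in Lemma~\ref{lem:externalangle}) to force a buried bi-accessible cycle. Your plan avoids the polynomial-like machinery entirely and tries to walk from one Fatou boundary to another.

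There are, however, two real gaps.

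\textbf{The acknowledged gap is not merely technical.} You split into the cases ``$W$ contains a periodic Fatou component'' and ``$W$ contains no bounded Fatou component'', but these are not exhaustive: $W$ may contain only strictly preperiodic Fatou components, whose forward orbits leave $W$. In that situation you have neither a root point to quote nor a dendritic branch structure to exploit, and producing a periodic $\lambda(f_0)$-pair inside $I$ genuinely requires the kind of dynamical input (return maps, counting arguments) that the paper's proof supplies. This is precisely the step where the paper's puzzle argument does the heavy lifting.

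\textbf{The termination argument is incomplete.} You claim the recursion ``excludes a further periodic bounded Fatou component at each step'', but you only prove $V_1\neq U_0$ (because $V_1\subset W$). At the next stage you choose $W_1$ to be the side of the ray-pair at $w^\ast$ \emph{not containing} $V_1$. Since the ray-pair at $w^\ast$ lies inside $W$, one of the two complementary regions is contained in $W$ and the other contains $\mathbb{C}\setminus\overline W$ (hence $U_0$). If $V_1$ happens to lie in the inner region, your $W_1$ is the \emph{outer} one, which contains $U_0$; nothing then prevents $V_2=U_0$, and the recursion can cycle. To make the induction work you would need the regions $W=W_0\supset W_1\supset W_2\supset\cdots$ to be nested, and this is not guaranteed by your choice of ``the side not containing $V_k$''.
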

\begin{proof} In order to show that $f_0$ has a buried bi-accessible point, it is enough to show that for some $s\ge 1$, $f_0^s$ has a repelling fixed point which is the landing point of an external ray not fixed by $f_0^s$. Indeed, if a repelling fixed point $p$ of $f_0^s$ is in the boundary of a bounded Fatou component $V$, then by the assumption that $f_0$ is primitive, we have $f_0^s(U)=U$, hence by the Claim in Lemma~\ref{lem:externalangle}, any external ray landing at $p$ is fixed by $f_0^s$. Therefore, it is enough to prove the following Statement $N$ for each $N\ge 0$.

{\bf Statement $N$.} Suppose that $f_0\in \text{Poly}_d$ is a postcritically finite, hyperbolic and primitive map and $f_0(z)\not=z^d$, where $d\ge 2$. If $f_0$ has at most $N$ attracting periodic points, then there exists $s\ge 1$ such that $f_0^s$ has a repelling fixed point $p$ which is the landing point of an external ray which is not fixed by $f_0^s$.

We proceed by induction on $N$. Statement $0$ is a null statement.
Let $N\ge 1$ and assume that the statement $N'$ holds all $0\le N'<N$.  Let $f_0\in Poly_{d}$ be as in Statement $N$.
By Lemma~\ref{lem:puzzle0}, $f_0$ has a bi-accessible repelling periodic point $p_0$. Let $\mathcal{R}_{f_0}(t_i)$, $1\le i\le q$, be the external rays landing at $p_0$, where $q\ge 2$. Replacing $f_0$ by an iterate of $f_0$, we assume that
\begin{itemize}
\item all the external rays $\mathcal{R}_{f_0}(t_i)$ are fixed by $f_0$;
\item all attracting periodic points of $f_0$ are fixed by $f_0$.
\end{itemize}
In particular, $f_0(p_0)=p_0$. (Note that $f_0^k$, $k\ge 1$, satisfies the assumption of Statement $N$.)
Let us construct a Yoccoz puzzle using $Z=\{p_0\}$. Let $Y_0^j$, $1\le j\le q$ denote the puzzle pieces of depth zero and for each $n\ge 1$, let $Y_n^j$ denote the puzzle piece of depth $n$ which satisfies $Y_n^j\subset Y_0^j$
and $p_0\in \overline{Y_n^j}$. Since all the external rays $\mathcal{R}_{f_0}(t_j)$ are fixed, $f_0: Y_n^j\to Y_{n-1}^j$ is a proper map. Let $d_{n,j}$ denote the degree of $f_0:Y_n^j\to Y_{n-1}^j$, let $D_{n,j}=d_{1,j}d_{2,j}\cdots d_{n,j}$ and let $d_j=\lim_{n\to\infty} d_{n,j}$. Let $\Delta_{n,j}=\{t\in\R/\Z: \mathcal{R}_{f_0}(t)\cap \overline{Y_n^j}\not=\emptyset\}$. Note that $\Delta_{0,j}$ is a closed interval and $\Delta_{n,j}$ is a disjoint union of $D_{n,j}$ closed intervals each of which is mapped onto $\Delta_{0,j}$ under $m_d^n$ diffeomorphically.

Let us show $d_{1,j}\ge 2$ for each $j$. Indeed otherwise, $f_0: Y_1^j\to Y_0^j$ is a conformal map, which implies that $m_d:\Delta_{1,j}\to \Delta_{0,j}$ is a homeomorphism, which is absurd since $\Delta_{1,j}$ intersects both endpoints of $\Delta_{0,j}$.

{\em Case 1.} Suppose that $d_j=1$ holds for some $j$. Let $s_0$ be such that $d_{s,j}=1$ for all $s> s_0$. Then $f_0^{s-s_0}|Y_s^j$ is univalent for all $s> s_0$. Since $f_0$ has only finitely many attracting periodic points and every attracting cycle of $f_0$ contains a critical point, there exists $s_1>s_0$ such that $\overline{Y_{s_1}^j}$ does not contain any attracting periodic point of $f_0$.
Consider the map $f_0^{s_1}: \overline{Y_{s_1}^j}\to \overline{Y_0^j}$ which has degree $D:=D_{s_1,j}\ge d_{1,j}\ge 2$. By the thickening technique (\cite{Mil4}), it extends to a polynomial-like map $f_0^{s_1}: U_j\to U_j'$ of degree $D$ in the sense of Douady and Hubbard, so it has $D$
fixed points which are contained in $\overline{Y_{s_1}^j}$. By our choice of $s_1$, none of these fixed point is attracting. Since $f_0$ is hyperbolic, it follows that all the $D$ fixed points of $f_0^{s_1}: U_j\to U_j'$ are repelling.
The number of external rays of $f_0$ which intersect $\overline{Y_{s_1}^j}$ and are fixed by $f_0^{s_1}$ is exactly $D$, with two of them landing at the same point $p_0$. It follows that one of the repelling fixed point $p$ of $f_0^{s_1}|\overline{Y_{s_1}^j}$ is not the landing point of $f_0^{s_1}$-fixed external ray.

{\em Case 2.} Assume that $d_j>1$ holds for all $j$. Take $n_j$ sufficently large so that $d_{n,j}=d_j$ for all $n\ge n_j$. Then all critical points of $f_0|Y_{n_j}^j$ do not escape from $Y_{n_j}^j$ under iteration, hence $f_0: \overline{Y_{n_j}^j}\to \overline{Y_{n_j-1}^j}$ is a proper map of degree $d_j$ with non-escaping critical points. Again by the thickening technique (\cite{Mil4}), it extends to a polynomial-like map $f_0: U_j\to U_j'$ of degree $d_j$ in the sense of Douady and Hubbard.  Thus it is topologically conjugate to a polynomial $g_j\in Poly_{d_j}$ with connected Julia set. The polynomial $g_j$ is hyperbolic, postcritically finite and primitive. Since for any $j'\not=j$, $f_0$ has an attracting fixed point in $Y_{n_{j'}}^{j'}$, the number of attracting periodic points of $f_0: U_j\to U'_j$ is less $N$. Thus the number of attracting periodic points of $g_j$ is less than $N$.

{\em Subcase 2.1} Assume $g_j(z)\not=z^{d_j}$ for some $j$. Then by the induction hypothesis, $g_j$ has a periodic point $\tilde{p}$ of period $s$ which is not the landing point of $g_j^s$-fixed external rays. Taking the corresponding periodic point $p$ of $f_0: U_j\to U_j'$, we are done.

{\em Subcase 2.2} Assume $g_j(z)=z^{d_j}$ for all $j$. This implies that the filled Julia set of $f_0: U_j\to U_j'$ is the closure of a Jordan disk $V_j$ which contains $p_0$. Each $V_j$ is a bounded Fatou component of $f_0$. These bounded Fatou components $V_j$, $1\le j\le q$, have $p_0$ as a common point in their closures, contradicting the assumption that $f_0$ is primitive.
\end{proof}

\newcommand{\Crit}{\text{Crit}}
\begin{proof}[Proof of Theorem~\ref{thm:puzzle}]
Let $\Crit_{per}$ denote the set of all periodic critical points of $f_0$ and for each $c\in \Crit_{per}$, let $q(c)$ denote its period. For each admissible set $Z$ and $c\in\Crit_{per}$, let $s_Z(c)$ denote the minimal positive integer such that $f_0^{s_Z(c)}(c)\in \bigcap_{n=0}^\infty Y_n^Z(c)$ and let $d_Z(c)=\lim_{n\to\infty} \text{deg} (f^{s_Z(c)}|Y_n^Z(c))$. Of course $q(c)\ge s_Z(c)$.
Note that if $Z\subset Z'$ then $s_Z(c)\le s_{Z'}(c)$ for all $c\in\Crit_{per}$, and if $s_Z(c)=s_{Z'}(c)$ then $d_{Z}(c)\ge d_{Z'}(c)$.
Given admissible sets $Z\subset Z'$ we say that $Z'$ is a {\em (proper) refinement} of $Z$ if one of the following holds:
\begin{itemize}
\item there exists $c_0\in \Crit_{per}$ such that $s_{Z'}(c_0)>s_{Z}(c_0)$;
\item $s_{Z'}(c)=s_Z(c)$ for all $c\in \Crit_{per}$ and there exists $c_0\in \Crit_{per}$ such that $d_{Z'}(c_0)< d_{Z}(c_0)$.
\end{itemize}
Clearly, there does not exist an infinite sequence of admissible sets $\{Z_n\}_{n=1}^\infty$ such that for all $n$, $Z_{n+1}$ is a refinement of $Z_n$.

Let us say an $f_0$-admissible set $Z$ is {\em buried} if $Z$ is disjoint from the boundary of any bounded Fatou component. A buried $f_0$-admissible set exists by Lemma~\ref{lem:puzzle}. It suffices to prove that if $Z$ is a buried $f_0$-admissible set for which the property required by the theorem does not hold, then there exists a buried $f_0$-admissible set $Z'$ which is a refinement of $Z$.

To this end, assume that there exists $c_0\in \Crit_{per}$ such that $\bigcap_{n=0}^\infty Y_n^Z(c_0)\supsetneq \overline{U(c_0)}$.
Write $s=s_Z(c_0)$. When $N$ is large enough, the critical points of the proper map $g=f_0^s|Y_{N+s}(c_0)$ never escapes from its domain. Using the thickening technique (\cite{Mil4}), $g$ extends to a Douady-Hubbard polynomial-like map with connected Julia set. Thus it is hybrid equivalent to a monic centered polynomial $P$ which is necessarily hyperbolic and postcritically finite. Let $D\ge 2$ denote the degree of $P$ and let $h$ denote a hybrid conjugacy. As the filled Julia set of $P$ is not a topological disk, $P(z)\not=z^D$. So by Lemma~\ref{lem:puzzle}, $P$ has a repelling periodic point $\hat{z}_1$ which is biaccessible and buried.
By~\cite[Lemma 3.6]{I1} (see also ~\cite[Theorem 7.11]{McM1}), $z_1=h^{-1}(\hat{z}_1)$ is a buried biaccessible repelling periodic point of $f_0$.
Let $Z'$ denote the union of $Z$ and the $f_0$-orbit of $z_1$. As $\bigcap_{n=0}^\infty Y_n^{Z'}(c_0)$ is a proper subset of $\bigcap_{n=0}^\infty Y_n^Z(c_0)$, either $s_{Z'}(c_0)\not=s_Z(c_0)$ or $d_{Z'}(c_0)< d_Z(c_0)$. This completes the proof.
\end{proof}

We shall need the following result later.
\begin{Proposition}\label{prop:noncrpiece} Let $f_0$ and $Z$ be as in Theorem~\ref{thm:puzzle}. Then
$$\sup\{\text{diam}(Y): Y \text{ is a puzzle piece of depth }n, \overline{Y}\cap Z\not=\emptyset\}\to 0\text{ as } n\to\infty.$$
\end{Proposition}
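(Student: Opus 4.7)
The plan is to exploit the defining property of the puzzle supplied by Theorem~\ref{thm:puzzle}---that $\bigcap_n Y_n(c)=\overline{U(c)}$ for every critical point $c$ of $f_0$---together with the fact that $Z$ is buried and thus at positive Euclidean distance from each $\overline{U(c)}$. First I would show that pieces touching $Z$ at sufficiently large depth avoid every critical point of $f_0$; forward-invariance of $Z$ will promote this to univalence of a long iterate of $f_0$ on the piece, and a standard angular-measure computation will then give exponential shrinking of the Julia portion, while the Fatou portion will be controlled by elementary geometric considerations.

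To begin, $\overline{Y_n(c)}\to\overline{U(c)}$ in the Hausdorff topology together with $\operatorname{dist}(Z,\overline{U(c)})>0$ yields an integer $K\ge1$ such that $\overline{Y_n(c)}\cap Z=\emptyset$ for all $n\ge K$ and all critical $c$. Every point of $\partial Y_n$ lies on an external ray, on an equipotential, or is a vertex in $f_0^{-n}(Z)\subset J(f_0)$, so $\partial Y_n$ avoids the interior of every Fatou component; hence a critical point $c\in\overline{Y_n}$ must lie in $Y_n$, forcing $Y_n=Y_n(c)$. Therefore for $n\ge K$ no depth-$n$ piece touching $Z$ contains a critical point. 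Since $Z$ is forward-invariant, each $f_0^j(Y_n)$ is a depth-$(n-j)$ piece touching $Z$, so I may apply the same statement at every intermediate depth $\ge K$ and conclude that $f_0^{n-K}\colon Y_n\to Y_*:=f_0^{n-K}(Y_n)$ is a composition of $n-K$ univalent maps, with $Y_*$ belonging to the finite collection of depth-$K$ pieces touching $Z$.

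Next, set $\Delta_n:=\{t\in\R/\Z:\mathcal{R}_{f_0}(t)\cap Y_n\ne\emptyset\}$. Univalence at each intermediate step forces $m_d\colon t\mapsto dt$ to be injective on the angular set at every stage, so $|\Delta_n|=d^{-(n-K)}|\Delta_*|\le d^{-(n-K)}$, uniformly in $Y_n$. Because $f_0$ is postcritically finite, $J(f_0)$ is locally connected and the landing map $\pi\colon\R/\Z\to J(f_0)$ is uniformly continuous; since every point of $\overline{Y_n}\cap J(f_0)$ has the form $\pi(t)$ with $t\in\overline{\Delta_n}$, it follows that $\operatorname{diam}(\overline{Y_n}\cap J(f_0))\to 0$ uniformly.

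It remains to bound the Fatou contribution. Because $\partial Y_n$ avoids every Fatou interior, any bounded Fatou component $V$ meeting $\overline{Y_n}$ is contained in $Y_n$, and its Jordan-curve boundary $\partial V$ sits inside $\overline{Y_n}\cap J(f_0)$, whence $\operatorname{diam}(\overline V)=\operatorname{diam}(\partial V)$ is small. The portion of $\overline{Y_n}$ in the basin at infinity is the image under $\phi_{f_0}^{-1}$ of a sector of angular width $|\Delta_n|$ and radial extent $e^{d^{-n}}-1$; by local connectivity of $J(f_0)$ the extended map $\phi_{f_0}^{-1}$ is uniformly continuous on $\{1\le|w|\le 2\}$, so the image also has vanishing diameter. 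Summing the three estimates yields $\operatorname{diam}(\overline{Y_n})\to 0$ uniformly over all pieces $Y_n$ of depth $n$ with $\overline{Y_n}\cap Z\ne\emptyset$. The main difficulty is the opening critical-freeness step, where the Hausdorff shrinking of the $Y_n(c)$ furnished by Theorem~\ref{thm:puzzle} and the burial hypothesis on $Z$ must together be leveraged to decouple critical pieces from pieces meeting $Z$.
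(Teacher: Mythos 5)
Your strategy is viable and genuinely different from the paper's proof, which instead pulls back the finitely many configurations $Y_N^*(z)$ of closed pieces attached to $z\in Z$ by univalent branches with uniformly bounded Koebe distortion (after arranging that they avoid $P(f_0)$) and then invokes shrinking at points of the Julia set. However, your opening step has a real gap. You deduce that $\overline{Y_n(c)}\cap Z=\emptyset$ for large $n$ from the claim that $\overline{Y_n(c)}\to\overline{U(c)}$ in the Hausdorff topology. For a decreasing sequence of compacta the Hausdorff limit is $\bigcap_n\overline{Y_n(c)}$, whereas Theorem~\ref{thm:puzzle} only gives $\bigcap_n Y_n(c)=\overline{U(c)}$ for the \emph{open} pieces. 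The difference $F=\bigcap_n\overline{Y_n(c)}\setminus\overline{U(c)}$ consists of ``persistent vertices'': points lying on $\partial Y_n(c)$ for every large $n$, hence belonging to $f_0^{-n}(Z)$ for every large $n$. Nothing you cite excludes a point of $Z$ itself from being such a persistent vertex, and that is exactly the configuration you must rule out, so as written the step is a non sequitur. It is repairable by a soft argument: $\bigcap_n\overline{Y_n(c)}$ is a nested intersection of continua, hence a continuum containing $\overline{U(c)}$; the set $F$ is the complement in it of the closed set $\overline{U(c)}$, hence relatively open, and $F\subset\bigcup_n f_0^{-n}(Z)$ is countable; since a nonempty relatively open subset of a continuum is uncountable (a continuum is a perfect complete metric space), $F=\emptyset$. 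Some such argument must be supplied. (To be fair, the paper is equally terse at the corresponding point, where it asserts without proof that $Y_N^*(z)\cap P(f_0)=\emptyset$ for some $N$.)

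There is a second, smaller gap at the end of your Julia-set estimate: $\Delta_n$ is in general a finite union of intervals, and a subset of $\R/\Z$ of Lebesgue measure $d^{-(n-K)}$ can still be spread over the whole circle, so uniform continuity of $\pi$ alone does not bound $\operatorname{diam}\pi(\overline{\Delta_n})$. You need two further observations: (i) since $f_0^{n-K}\colon Y_n\to Y_*$ is univalent, the components of $\Delta_n$ correspond bijectively to those of $\Delta_*$, so their number is bounded by a constant depending only on the finitely many depth-$K$ pieces meeting $Z$; and (ii) the two boundary angles of each complementary gap of $\Delta_n$ are angles of boundary rays of $Y_n$ landing at a common vertex, so the $\pi$-images of consecutive components share a point, whence $\operatorname{diam}\pi(\overline{\Delta_n})$ is at most the number of components times the modulus of continuity of $\pi$ at scale $|\Delta_n|$. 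With these two repairs your route --- univalence plus angular-measure decay plus local connectivity, in place of the paper's Koebe-distortion argument --- does yield the Proposition.
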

\begin{proof} For each $n\ge 0$ and $z\in Z$, let $Y^*_n(z)$ denote the union of the closures of the puzzle pieces of depth $n$ which contain $z$ in their boundaries. Since $Z$ is finite, there exists $N$ such that $Y^*_N(z)\cap P(f_0)=\emptyset$ for all $z\in Z$. For each $n\ge 0$, and $z\in Z$, $f_0^n: Y_{n+N}^*(z)\to Y_N^*(f_0^n(z))$ is a conformal map which extends to a definite neighborhood of $f_0^n(z)$. It follows that $f_0^n|Y_{n+N}^*(z)$ has uniformly bounded distortion. Since $z\in J(f)$, this implies that $\text{diam}(Y_n^*(z))\to 0$ as $n\to\infty$.
\end{proof}

We shall construct $\lambda(f_0)$-renormalizations using the puzzle given by Theorem~\ref{thm:puzzle}.
The following is a criterion which will be used in the proof of surjectivity part of the main theorem.

\begin{Proposition}\label{prop:com}
Let $N_0$ be a positive integer such that for each $\v\in |T|$,
the puzzle pieces $f_0^j(Y_{N_0}(\v))$, $\v\in |T|$, $1\le j\le r(\v)$, are pairwise disjoint, and
$f_0^{r(\v)}: Y_{N_0}(\v)\to Y_{N_0-r(\v)}(\sigma(\v))$ has degree $\delta(\v)$.
Assume that $f\in \text{Poly}_d$ satisfies the following:
\begin{enumerate}
\item there is a homeomorphism $\psi: \C\to \C$ with the following properties:
\begin{itemize}
\item $\phi_f\circ \psi=\phi_{f_0}$ holds near $\infty$, where $\phi_f$ and $\phi_{f_0}$ are the B\"ottcher map for $f$ and $f_0$ respectively;
\item $\psi \circ f_0(z)=f\circ \psi(z)$ for all $z\in \C\setminus \bigcup_{\v} Y_{N_0}(\v)$.
\end{itemize}
\item  The map
$$F:\bigcup_{\v}\{\v\}\times \psi(Y_{N_0}(\v))\to \bigcup_{\v} \{\v\}\times \psi(Y_{N_0-r(\v)}(\sigma(\v))),$$
defined as $F(\v, z)=(\sigma(\v),f^{r(\v)}(z))$, is an AGPL map with fibrewise connected filled Julia set.
\end{enumerate}
Then $f\in \mathcal{C}(f_0)$ and $F$ is a $\lambda(f_0)$-renormalization of $f$.
%the filled Julia set of $F$ is $\bigcup_{\v} \{\v\}\times K(\v, f)$.
\end{Proposition}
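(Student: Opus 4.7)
The plan is to propagate the rational lamination of $f_0$ to $f$ via $\psi$ and the renormalization $F$, and then identify the fibers of $K(F)$ with the combinatorial small Julia sets demanded by the definition of a $\lambda(f_0)$-renormalization.

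First I would extend the identity $\phi_f\circ\psi=\phi_{f_0}$ from a neighborhood of $\infty$ to the entire complement $\C\setminus \psi(\bigcup_{\v} Y_{N_0}(\v))$. For $z$ in this complement, some $f_0$-iterate $f_0^n(z)$ already lies in the equipotential neighborhood where the identity is known; the functional equations $\phi_{f_0}\circ f_0=\phi_{f_0}^d$, $\phi_f\circ f=\phi_f^d$, together with the semi-conjugacy $\psi\circ f_0=f\circ\psi$ off the critical pieces, let me pull back this identity, with the correct branches pinned down by continuity from $\infty$. As a consequence, for each $t\in\R/\Z$ the curve $\psi(\mathcal R_{f_0}(t))$ coincides with $\mathcal R_f(t)$ on the portion lying outside $\psi(\bigcup_{\v} Y_{N_0}(\v))$.

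To prove $f\in\mathcal C(f_0)$, I would take any pair $\theta_1\sim_{\lambda(f_0)}\theta_2$ with common $f_0$-landing $p$ and split into cases. If $p\notin\bigcup_{\v} Y_{N_0}(\v)$, then by monotonicity of $G_{f_0}$ along external rays (and the fact that the ray-boundary of a piece has fixed angles, which $\theta_i$ can always be assumed to avoid), neither $\mathcal R_{f_0}(\theta_i)$ enters a critical piece, so $\mathcal R_f(\theta_i)=\psi(\mathcal R_{f_0}(\theta_i))$ and both share the landing $\psi(p)$. Otherwise, pass to $(m_d^n\theta_1,m_d^n\theta_2)$ so that the new common $f_0$-landing lies in some $Y_{N_0}(\v_0)$; the corresponding $f$-rays enter $\psi(Y_{N_0}(\v_0))$ through its equipotential boundary and are governed by $F$. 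Applying the straightening theorem, $F$ is hybrid equivalent to some $g\in\mathcal C(T)$, and the external marking of $F$ inherited from $\psi$ at the equipotential boundary matches the standard marking of $g$. The landing of the two inner rays is then determined by the rational lamination of $g$ in the $\v_0$-fiber, and pulling back by the appropriate branch of $f^{-n}$ along the nested puzzle pieces recovers a common landing for $\theta_1$ and $\theta_2$. I expect this second case to be the main obstacle --- carefully tracking how global $f$-rays become internal rays of the straightened polynomial $g$, and verifying that the forward-invariant relation induced by $\lambda(f_0)$ matches $\lambda(g)$ fiberwise --- and the key device is the compatibility between the external marking of $F$ and the standard marking of $g$ provided by the straightening.

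For the remaining claim that $F$ is a $\lambda(f_0)$-renormalization of $f$, the defining equation $F(\v,z)=(\sigma(\v),f^{r(\v)}(z))$ is already part of hypothesis (2), so it suffices to verify $K(F,\v)=K(\v,f)$ with $K(\v,f)=\bigcap_{\theta\sim_{\lambda(f_0)}\theta'}\overline{S(\theta,\theta';\v)}\cap K(f)$. The inclusion $K(F,\v)\subseteq K(\v,f)$ follows from the ray analysis: any point separated from the $\v$-sector by a $\lambda(f_0)$-ray pair of $f$ would, under iteration of $F$, be forced to exit $\psi(Y_{N_0}(\v))$, contradicting membership in $K(F,\v)$. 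Conversely, $\psi$ carries the combinatorial $\v$-sector of $f_0$ at depth $N_0$ into $\psi(Y_{N_0}(\v))$, so $K(\v,f)$ is contained there; it is $F$-invariant by the first conclusion, and a compactness argument yields $K(\v,f)\subseteq K(F,\v)$.
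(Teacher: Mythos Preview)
Your dichotomy is not quite the right one, and the second case contains a real gap. The correct split is whether the common $f_0$-landing point $p$ is eventually mapped onto $\bigcup_\v\partial\v$ or not. If $p\in Y_{N_0}(\v)$ but its orbit never meets $\bigcup_\v\partial\v$, then since $\bigcap_n Y_n(\v)=\overline\v$ (Theorem~\ref{thm:puzzle}) there is a depth $k>N_0$ at which the entire finite orbit of $p$ lies outside $\bigcup_\v Y_k(\v)$. The paper therefore iterates your pull-back construction to every depth $k\ge N_0$, producing homeomorphisms $\psi_k$ (and then qc maps $\Psi_k$) that semi-conjugate $f_0$ to $f$ off the depth-$k$ critical pieces; at that depth your ``first case'' argument applies verbatim. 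Your plan stops at depth $N_0$ and instead tries to funnel all such points into the second case, which overloads it.

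For the genuine second case --- $p\in\partial\v_0$ after iteration --- invoking the straightening of $F$ does not work as stated. A hybrid conjugacy between $F$ and some $g\in\mathcal C(T)$ need not carry tails of $f$-external rays to $g$-external rays of any identifiable angles; the external class is exactly what a hybrid equivalence is free to alter. So the sentence ``the landing of the two inner rays is then determined by the rational lamination of $g$'' is unjustified: you have neither specified which $g$-angles are involved nor explained why they should be $\lambda(g)$-equivalent. The paper's device here is entirely different: one builds a qc map $H_0:\C\setminus\overline{\v_0}\to\C\setminus K(F,\v_0)$ carrying $\mathcal R_{f_0}(\theta_i)$ onto $\mathcal R_f(\theta_i)$, composes with a Riemann map $\vartheta:\C\setminus\overline{\mathbb D}\to\C\setminus K(F,\v_0)$, uses that $\partial\v_0$ is a Jordan curve so that $\vartheta^{-1}\circ H_0$ extends continuously to the boundary, and concludes via Lindel\"of's theorem that the two $f$-rays share a landing point. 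No appeal to $\lambda(g)$ is made. (For the final identification $K(F,\v)=K(\v,f)$ the paper simply cites \cite[Proposition~3.13]{IK}; your direct inclusion argument presupposes in part the lamination containment you are still proving.)
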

\begin{proof}
First of all, note the assumption implies that the filled Julia set $K(f)$ of $f$ is connected and $\widehat{Z}=\psi(Z)$ is an admissible set for $f$. It suffices to show that $f\in \mathcal{C}(f_0)$. Once this is proved, the other statement follows from \cite[Proposition 3.13]{IK}.
Let $L(\v, f)$ denote the filled Julia set of $F$ in the fiber $\{\v\}\times \C$.

{\bf Step 1.} We show by induction that for each $k\ge N_0$, there is a homeomorphsim $\psi_k:\C\to\C$ which coincide with $\phi_f^{-1}\circ \phi_{f_0}$ on $\Gamma_k\setminus J(f)$.

For $k=N_0$, we choose $\psi_{N_0}=\psi$.
Assume now that $\psi_k$ has been defined for some $k\ge N_0$ and let us construct $\psi_{k+1}$. For each $Y\subset \C$, denote $Y'=\psi_k(Y)$. It suffices to construct, for each $Y\in \mathcal{Y}_k$, a homeomorphism
$\psi_{k+1}: \overline{Y}\to \overline{Y'}$ so that $f\circ \psi_{k+1}(z)=\psi_k\circ f_0(z)$ for $z\in \overline{Y}\cap \Gamma_{k+1}$. Indeed, if $Y$ does not contain a critical point of $f_0$, then $f_0:Y\to f_0(Y)$ is a conformal map, and so is $f: Y'\to f(Y')$. In this case, we define $\psi_{k+1}|\overline{Y}=(f|\overline{Y'})^{-1}\circ (\psi_k|f_0(\overline{Y})) \circ (f_0|\overline{Y}))$. Assume that $Y$ contains a critical point of $f_0$, so that $Y=Y_k(\v)$ for some $\v\in |T|$ and hence $Y'\supset L(\v, f)$. Let $B=Y_{k-r(\v)+1}(\sigma(\v))$, $A=Y_{k+1}(\v)$ and $X=Y_{k-r(\v)}(\sigma(\v))$. Then $B'\supset L(\sigma (\v), f)$, $A'\supset L(\v, f)$ and $X'\supset L(\sigma(\v), f)$. Since $f_0^{r(\v)}: \overline{Y}\setminus A\to \overline{X}\setminus B$ and $f^{r(\v)}: \overline{Y'}\setminus A'\to \overline{X'}\setminus B'$ are both $\delta(\v)$ to $1$ covering, there is a homeomorphism $\psi_{k+1}: \overline{Y}\setminus A\to \overline{Y'}\setminus A'$ such that $\psi_{k+1}\circ f_0^{r(\v)}=f^{r(\v)}\circ \psi_k$ on $\overline{Y}\setminus A$ and $\psi_{k+1}=\psi_k$ on $\partial Y$. Extending the map $\psi_{k+1}$ in an arbitrary way to a homeomorphism from $\overline{Y}$ to $\overline{Y'}$, we obtain the desired map $\psi_{k+1}:\overline{Y}\to \overline{Y'}$.

{\bf Step 2.} For each $k\ge N_0$, there is a qc map $\Psi_k$ such that $\Psi_k=\phi_f^{-1}\circ \phi_{f_0}$ near infinity and such that $f\circ \Psi_k(z)=\Psi_k\circ f_0(z)$ for all $z\not\in \bigcup_\v Y_k(\v)$. This is well-known. See for example~\cite[Section 5]{KSS}.
This implies that if $\mathcal{R}_{f_0}(\theta_1)$ and $\mathcal{R}_{f_0}(\theta_2)$ land at a common point which is not in
$\bigcup_{n=0}^\infty f_0^{-n}\left(\bigcup_{\v\in |T|}\partial \v\right)$ ($\theta_1,\theta_2\in \mathbb{Q}/\mathbb{Z}$), then $\mathcal{R}_f(\theta_1)$ and $\mathcal{R}_{f}(\theta_2)$ have a common landing point as well. Indeed, there exists $k$ such that the whole $f_0$-orbit of the rays $\mathcal{R}_{f_0}(\theta_1)$ and $\mathcal{R}_{f_0}(\theta_2)$ lie outside $\bigcup_\v Y_k(\v)$, so
$\mathcal{R}_f(\theta_i)=\Psi_k(\mathcal{R}_{f_0}(\theta_i))$, $i=1,2$.

{\bf Step 3.} It remains to show that if $\mathcal{R}_{f_0}(\theta_1)$ and $\mathcal{R}_{f_0}(\theta_2)$, $\theta_1, \theta_2\in\mathbb{Q}/\mathbb{Z}$ landing at a common point in $\bigcup_{n=0}^\infty f_0^{-n}\left(\bigcup_{\v\in |T|}\partial \v\right)$,
then $\mathcal{R}_f(\theta_1)$ and $\mathcal{R}_{f}(\theta_2)$ have a common landing point.

Let us first assume that the common landing point is in $\partial \v_0$ for some $\v_0\in |T|$. Let $\Psi=\Psi_{N_0}$ be given by Step 2. We define a new qc map $H$ from $\mathbb{C}\setminus \bigcup_{\v}\overline{\v} \to \mathbb{C}\setminus \bigcup_{\v} L(\v, f)$ such that
$H=\Psi$ outside $\bigcup_{\v} Y_{N_0}(\v)$ and such that $H\circ f_0^{r(\v)}=f^{r(\v)}\circ H$ inside $\bigcup_{\v} Y_{N_0}(\v)$. Note that $H$ maps $\mathcal{R}_{f_0}(\theta_i)$ onto $\mathcal{R}_f(\theta_i)$, $i=1,2.$ For each $\textbf{v}\not=\textbf{v}_0$, choose a quasidisk $\Omega_{\textbf{v}}$ so that these quasidisks are pairwise disjoint and disjoint from $\textbf{v}_0\cup \mathcal{R}_{f_0}(\theta_1)\cup\mathcal{R}_{f_0}(\theta_2)$.  Let $H_0=H$ on $\mathbb{C} \setminus ( \overline{\v_0}\cup\bigcup_{\v\ne \v_0}\Omega_{\v})$, and then extend $H_0$ quasiconformally to $\mathbb C\setminus \overline{\v_0}$ by Beurling-Ahlfors extension.
So we obtain a qc map $H_0: \mathbb{C}\setminus \overline{\v_0}\to \mathbb{C}\setminus L(\v_0,f)$ which again maps $\mathcal{R}_{f_0}(\theta_i)$ onto $\mathcal{R}_f(\theta_i)$, $i=1,2$. Let $\vartheta:\mathbb{C}\setminus \overline{\mathbb{D}}\to \mathbb{C}\setminus L(\v_0,f)$ denote a Riemann mapping. Since $\partial \v_0$ is a Jordan curve, $\vartheta^{-1}\circ H_0$ extends continuously to $\mathbb{C}\setminus \v_0$. Since $\mathcal{R}_f(\theta_i)$ both land  and $\vartheta^{-1}(\mathcal{R}_{f} (\theta_1))$ and $\vartheta^{-1}(\mathcal{R}_f(\theta_2))$ have
a common landing point, by Lindelof's theorem, we conclude that $\mathcal{R}_f(\theta_i)$, $i=1,2$, land at the same point.

For the general case, let $n\ge 1$ be minimal such that the common landing point of $\mathcal{R}_{f_0}(d^n \theta_i)$ ($i=1,2$) lie in $\bigcup_{\v} \partial \v$. As proved above, the external rays $\mathcal{R}_f(d^n \theta_i)$, $i=1,2$, have a common landing point $z$. Let $k$ be large integer such that the external rays $\mathcal{R}_{f_0}(d^j \theta_i)$, $0\le j<n$, lie outside $\bigcup_\v Y_k(\v)$,
let $Y$ denote $f_0$-the puzzle piece of depth $k+n$ which contains the common landing point of $\mathcal{R}_{f_0}(\theta_1)$ and $\mathcal{R}_{f_0}(\theta_2)$ and let $Y'=\psi_{k+n}(Y)$. Then $f^n: Y'\to Y_k(\v)$ is a conformal map for some $\v\in |T|$ and the rays $\mathcal{R}_f(\theta_1)$ and $\mathcal{R}_f(\theta_2)$ enter $Y'$. Thus both of them have to land at the unique point in $\overline{Y'}$ which is mapped to $z$ by $f^n$.
\end{proof}

\section{Kahn's quasiconformal distortion bounds}\label{sec:Kahn}
In this section, we will modify the argument in~\cite{Kahn}\footnote{According to Kahn, Yoccoz may have a similar result.} to obtain a K-qc extension principle. The main result is Theorem~\ref{thm:Kqcpuzzle} which will be used later to show the convergence of the Thurston Algorithm in the proof of the Main Theorem.

Throughout we fix a monic centered, hyperbolic, postcritically finite and primitive polynomial $f_0$ of degree $d$ such that $f_0(z)\not=z^d$. Let $Z$ be an admissible set given by Theorem~\ref{thm:puzzle} and let $Y_n(z)=Y_n^Z(z)$.
Let $\Crit(f_0)=\{c\in \C: f_0'(c)=0\}$ and let $L_n$ denote the domain of the first landing map to $\bigcup_{c\in \Crit(f_0)} Y_n(c)$:
\begin{equation}\label{eqn:dfnLn}
L_n=\left\{z\in \C: \exists k\ge 0 \text{ such that } f_0^k(z)\in \bigcup_{c\in \Crit(f_0)} Y_n(c)\right\}.
\end{equation}

\begin{Theorem}\label{thm:Kqcpuzzle}
There exists $N>0$ and for any puzzle piece $Y$ of depth $m\ge 0$,
there is a constant $C=C(Y)>1$ satisfying the following property: if $Q:Y \to Q(Y)$ is a qc map which is conformal a.e. in $Y\setminus L_{m+N}$,
%:=L(\bigcup\limits_{c\in \Crit(f_0)}Y_{3n}(c))$ to $\bigcup\limits_{c\in \Crit(f_0)}Y_{3n}(c)$,
then there exists a $C$-qc map $\widetilde Q :Y\to Q(Y)$ such that $\widetilde Q=Q$ on $\partial Y$.
\end{Theorem}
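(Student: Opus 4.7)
The plan is to follow Kahn's approach in~\cite{Kahn}: we will produce $\widetilde{Q}$ by replacing $Q$, on each connected component of the ``bad'' set $L_{m+N}\cap Y$, with a qc map of uniformly controlled dilatation, while keeping $Q$ unchanged on $Y\setminus L_{m+N}$ (where it is already conformal) and on $\partial Y$.

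First I would use Theorem~\ref{thm:puzzle} to conclude that $\bigcap_n Y_n(c)=\overline{U(c)}$ for each critical point $c$. Since $f_0$ is hyperbolic and $Z$ is buried (hence disjoint from $\partial U(c)$), $\overline{U(c)}$ is a closed Jordan disk compactly contained in every $Y_n(c)$. I will choose $N$ large enough that for each critical point $c$ the annulus $A_c:=Y_{m+N-1}(c)\setminus\overline{Y_{m+N}(c)}$ has modulus at least some $\eta=\eta(f_0)>0$; the finiteness of $\Crit(f_0)$ together with Proposition~\ref{prop:noncrpiece} permits such a uniform choice. I will then observe that each connected component $U$ of $L_{m+N}\cap Y$ is a puzzle piece of depth $m+N+k_U$ mapped univalently by $f_0^{k_U}$ onto some $Y_{m+N}(c_U)$: univalence follows from the first-entry property of $k_U$, since any intermediate critical point $c'\in f_0^j(U)$ with $j<k_U$ would force $Y_{m+N}(c')\subset f_0^j(U)$, contradicting minimality of $k_U$. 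Pulling $A_{c_U}$ back along this univalent branch will yield an annular collar $A_U$ around $U$ with $\mathrm{mod}(A_U)\geq \eta$.

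The crucial step will be the replacement. For each component $U$ I will build a qc map $\widetilde{Q}_U\colon \overline{U}\to \overline{Q(U)}$ with $\widetilde{Q}_U=Q$ on $\partial U$ and dilatation at most some $K=K(\eta)$. The only information to exploit is that $Q$ is conformal on $A_U$ away from the (possibly infinitely many) deeper components of $L_{m+N}$ nested inside $A_U$. Kahn's key estimate, a consequence of the quasi-additivity of moduli, shows that $Q(U)$ is a $K$-quasidisk and that $Q|_{\partial U}$ is $K$-quasisymmetric in suitable uniformizing coordinates, with $K$ depending only on $\eta$; a standard qc extension from the boundary (for instance Beurling--Ahlfors after circle uniformization) then produces $\widetilde{Q}_U$. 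Gluing the $\widetilde{Q}_U$ with $Q|_{Y\setminus \bigcup_U U}$ will give a homeomorphism $\widetilde{Q}\colon Y\to Q(Y)$ equal to $Q$ on $\partial Y$ and $K(\eta)$-qc everywhere, and we will take $C=K(\eta)$.

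The hard part will be the replacement step. The naive version, ``$Q$ is conformal on a definite-modulus collar around $U$, hence $Q|_{\partial U}$ is uniformly quasisymmetric'', fails because the collar $A_U$ may contain further components of $L_{m+N}$ on which $Q$ has uncontrolled dilatation. Resolving this requires Kahn's quasi-additivity machinery, which is precisely designed to bound the cumulative effect of the entire nested sub-family of bad components on the shape of $\partial Q(U)$. Transplanting this technique to the postcritically finite primitive puzzle constructed in \S\ref{sec:puzzle} will be the technical core of the section.
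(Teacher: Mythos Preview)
Your outline diverges substantially from the paper's argument, and the place where you wave your hands (``transplant Kahn's quasi-additivity machinery'') is exactly where the entire content of the proof lives.

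The paper does \emph{not} try to put a collar of definite modulus around each component $U$ of $L_{m+N}\cap Y$ and perform a replacement on $U$ one at a time. Instead it observes that the only obstruction to $\mathcal{QD}(L_{m+N}\cap Y,Y)<\infty$ is the accumulation of landing domains at the finitely many Julia vertices $x\in\partial Y\cap J(f_0)$, and it handles each such vertex \emph{dynamically}. For every sector $S_j$ at a point of $Z$ the paper builds, via Proposition~\ref{prop:slice}, a slice $S^j$ and two sub-quasidisks $A_j,B_j\subset S^j$ on which a high iterate $f_0^q$ acts as a conformal $2$-to-$1$ Markov map; a qc conjugacy $H$ then identifies the nested landing-domain pattern inside $S^j$ with Kahn's recursively notched square $\mathcal N\subset S$. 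Kahn's Theorem~\ref{thm:Kahn} is applied as a black box to conclude $\mathcal{QD}(L_N\cap S^j,S^j)<\infty$. For a general piece $Y$ of depth $m$, one pulls back a slice under $f_0^m$ to a neighborhood $V_x$ of each vertex $x$, gets $\mathcal{QD}(L_{m+N}\cap V_x,V_x)\le Q$ uniformly, and finishes because the remaining landing domains are compactly contained in $Y$ (Lemma~\ref{compact}).

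Your approach runs into two concrete problems. First, your collar $A_c=Y_{m+N-1}(c)\setminus\overline{Y_{m+N}(c)}$ is a \emph{degenerate} annulus: consecutive puzzle pieces share their Julia vertices (preimages of $Z$), so $\partial Y_{m+N-1}(c)$ and $\partial Y_{m+N}(c)$ touch, and the pulled-back collar $A_U$ touches $\partial U$ at the very points where deeper landing domains accumulate. There is no honest protective annulus, and Proposition~\ref{prop:noncrpiece} does not manufacture one. Second, and more seriously, even if one could make sense of the moduli, your plan to bound $Q|_{\partial U}$ uniformly requires controlling $Q$ on $A_U\cap L_{m+N}$, which is exactly the original problem at one scale down; you then defer to ``Kahn's quasi-additivity'' without saying what the inductive hypothesis is, why the geometry of the nested family inside $A_U$ matches anything in~\cite{Kahn}, or how the estimates close up. The paper sidesteps this recursion entirely by producing a genuine qc conjugacy to the ternary model, so that Kahn's theorem applies verbatim rather than needing to be reproved in situ. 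If you want your route to work, you would effectively have to redo Kahn's argument for a much less explicit self-similar family, and nothing in your outline indicates how.
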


\subsection{Quasiconformal Distortion Bounds and a toy model}
The difficulty in proving the theorem is that the landing domains $L_{m+N}$ may come arbitrarily close to the boundary of $Y$. To deal with the situation, we shall need a toy model developed by Kahn (\cite{Kahn}).

Let us first recall some terminology from \cite{Kahn}. Let $U \subset \mathbb C$ be a Jordan domain and $A$ be a measurable subset of $U$. We say that $(A,U)$ has {\em bounded qc distortion} if there exists a constant $K\ge 1$ with the following property: if $Q:U\to Q(U)$ is a quasiconformal map and $\bar\partial Q=0$ a.e. outside $A$, then there is a $K$-q.c map $\tilde Q:U \to Q(U)$ such that $\tilde Q=Q$ on $\partial U$. Let  $\mathcal{QD}(A,U)$ denote the smallest $K$ satisfying the property.
Using this terminology, we can restate Theorem~\ref{thm:Kqcpuzzle} as follows:

\begin{theorem3'} There exists $N>0$ such that if
$Y$ is a puzzle piece $Y$ of depth $m\ge 0$,
\[\mathcal{QD}(L_{m+N}\cap Y, Y)<\infty.\]
\end{theorem3'}

We shall need the following easy facts.
\begin{Lemma}\label{compact}\cite[Fact~1.3.6]{Kahn} If $A \subset U$ is compact, then $\mathcal{QD}(A,U)<\infty$.
\end{Lemma}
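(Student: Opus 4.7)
The plan is to leave $Q$ unchanged outside a smooth Jordan domain compactly containing $A$, and to replace $Q$ inside that domain by a controlled quasiconformal extension of its boundary values. First I would fix a smooth Jordan domain $V$ with $A\subset V\Subset U$. Because $\overline{A}\subset V$ and $\overline V\subset U$, the curve $\partial V$ has a full annular neighbourhood inside $U\setminus A$ whose modulus is bounded below by some $m_0=m_0(A,U,V)>0$. Since $Q$ is conformal on $U\setminus A$, it is in particular conformal on this annulus. Koebe--Teichm\"uller distortion applied on round sub-disks of definite size then shows that the boundary correspondence $Q|_{\partial V}\colon\partial V\to\partial Q(V)$ is quasisymmetric and that $\partial Q(V)$ is a quasicircle, with constants depending only on $m_0$ and the fixed geometry of $\partial V$, and in particular independent of $Q$.

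Next I would build the replacement inside $V$. Let $\phi\colon\mathbb D\to V$ and $\psi\colon\mathbb D\to Q(V)$ be Riemann mappings; both extend homeomorphically to the closures, and their boundary extensions are quasisymmetric with uniform constants because $\partial V$ and $\partial Q(V)$ are quasicircles as above. Hence the induced circle homeomorphism $h:=\psi^{-1}\circ Q\circ\phi|_{\partial\mathbb D}$ is quasisymmetric with a constant depending only on $A,U,V$. Applying the Ahlfors--Beurling extension theorem produces a $K$-qc self-map $\tilde h$ of $\mathbb D$ with $K=K(A,U,V)$, and setting
\[\tilde Q:=\begin{cases}Q & \text{on } U\setminus V,\\ \psi\circ\tilde h\circ\phi^{-1} & \text{on } V,\end{cases}\]
produces a homeomorphism $U\to Q(U)$ because the two definitions agree on $\partial V$. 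Its dilatation is $1$ on $U\setminus V$ and at most $K$ on $V$, and $\tilde Q=Q$ on $\partial U\subset U\setminus V$, yielding $\mathcal{QD}(A,U)\le K$.

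The main obstacle is ensuring that the distortion constants in the first step are genuinely independent of $Q$. The delicate point is to verify that Koebe-type estimates for the conformal map $Q$ on the annular neighbourhood of $\partial V$ truly transport quasisymmetry from $\partial V$ to $\partial Q(V)$, and that $\partial Q(V)$ is a quasicircle with constants controlled only by $m_0$ and the smooth curve $\partial V$. This reduces to standard facts about conformal maps on annuli of definite modulus, applied in round disks whose radii are comparable to the distance from $\partial V$ to $A\cup\partial U$.
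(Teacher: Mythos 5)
Your proposal is correct. Note that the paper itself gives no proof of this lemma --- it is quoted verbatim as Fact~1.3.6 of Kahn's preprint --- and your argument (keep $Q$ outside a fixed smooth curve $\partial V$ separating $A$ from $\partial U$, use Koebe distortion on the annular collar of definite modulus in $U\setminus A$ to see that $Q|_{\partial V}$ is uniformly quasisymmetric and $\partial Q(V)$ is a uniform quasicircle, then fill in $V$ by conformal welding plus the Beurling--Ahlfors extension) is exactly the standard proof of that fact. The only step you leave implicit is that the glued map, being $K$-qc off the smooth Jordan curve $\partial V$, is globally $K$-qc; this is the usual removability of quasicircles for homeomorphisms (the same gluing lemma from Douady--Hubbard that the paper invokes elsewhere), so there is no gap.
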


\begin{Lemma}\label{qc1}\cite[Fact~1.3.4]{Kahn} Let $U$ and $V$ be Jordan domains in $\mathbb C$ and $A$ be a measurable subset of $U$.
If there exists a $L$-qc map $g:U\to V$ and $\mathcal{QD}(A,U)<\infty$, then \[\mathcal{QD}(g(A),V)\le L^2\mathcal{QD}(A,U).\]
\end{Lemma}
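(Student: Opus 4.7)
The plan is to transport a test qc map from $V$ back to $U$ via $g$, invoke the hypothesis $\mathcal{QD}(A,U)<\infty$, and transport the resulting $K$-qc replacement forward to $V$, paying at most one factor of $L$ in dilatation for each transport step. Let $K := \mathcal{QD}(A,U)$ and fix an arbitrary qc map $Q : V \to Q(V)$ with $\bar\partial Q = 0$ a.e.\ on $V\setminus g(A)$; the goal is to construct $\tilde Q : V \to Q(V)$ with $\tilde Q|_{\partial V} = Q|_{\partial V}$ and dilatation at most $L^2 K$.

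First I would form the pullback $P := Q\circ g : U \to Q(V)$. By the chain rule for Beltrami coefficients and the hypothesis on $Q$, one has $\mu_P = \mu_g$ on $U \setminus A$, so $P$ is $L$-qc (but \emph{not} conformal) outside $A$, and $\mathcal{QD}(A,U)\le K$ cannot be applied to $P$ directly. To remedy this, I would straighten $\mu_g$ off $A$: using MRMT, pick an $L$-qc self-map $\psi : U \to U$ whose Beltrami coefficient is $\mu_g$ on $U\setminus A$ and $0$ on $A$. Then $\hat P := P\circ \psi^{-1} : U \to Q(V)$ is conformal outside $\psi(A)$, since $\mu_P = \mu_\psi$ on $U\setminus A$, and $\chi := g\circ \psi^{-1} : U \to V$ is an $L$-qc map conformal outside $\psi(A)$ that satisfies $\chi\circ\psi = g$.

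The heart of the argument is to produce an $LK$-qc replacement $\widetilde{\hat P} : U \to Q(V)$ of $\hat P$ agreeing with $\hat P$ on $\partial U$, i.e.\ to establish the intermediate estimate $\mathcal{QD}(\psi(A),U) \le LK$. The point is that $\psi|_A$ is conformal (its Beltrami vanishes there), so Beltrami coefficients on $A$ and on $\psi(A)$ correspond isometrically via $\psi$. Given a test qc map $R : U\to R(U)$ conformal outside $\psi(A)$, one constructs from $R$ a qc map $R_0 : U\to R_0(U)$ conformal outside $A$, with Beltrami on $A$ equal to the pullback of $\mu_R$ by $\psi|_A$; applying $\mathcal{QD}(A,U)\le K$ to $R_0$ gives a $K$-qc replacement $\tilde R_0$, which after composing back with the $L$-qc correction that encodes the Beltrami of $\psi$ on $U\setminus A$ produces the desired $LK$-qc replacement of $R$. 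Specialising to $R=\hat P$ gives $\widetilde{\hat P}$. Finally, I would set $\tilde Q := \widetilde{\hat P}\circ\chi^{-1} : V \to Q(V)$. The identity $\chi\circ\psi = g$ implies $\psi^{-1}\circ\chi^{-1} = g^{-1}$, so for $y\in\partial V$,
\[
\tilde Q(y) \;=\; \widetilde{\hat P}(\chi^{-1}(y)) \;=\; \hat P(\chi^{-1}(y)) \;=\; P(g^{-1}(y)) \;=\; Q(y),
\]
and the dilatation of $\tilde Q$ is bounded by $LK\cdot L = L^2 K$.

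The main obstacle is the intermediate estimate $\mathcal{QD}(\psi(A),U)\le LK$. A naive two-step straightening, applied without exploiting the structure of $\psi$, gives $L^2K$ for this intermediate bound and hence $L^3K$ overall; the improvement to the asserted $L^2K$ relies on the explicit choice of $\psi$ being conformal on the support $A$, so that the transfer of Beltrami coefficients across $\psi|_A$ is an isometry and only the tail $\mu_g|_{U\setminus A}$, of modulus at most $(L-1)/(L+1)$, needs to be absorbed by a single further $L$-qc composition. Tracking this carefully, together with the boundary-matching computation above, yields the claimed constant.
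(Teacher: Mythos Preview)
The paper does not give a proof of this lemma; it is simply cited from Kahn. The natural argument, however, is implicit in the paper's own proof of Lemma~\ref{qc2} (whose proof does not depend on Lemma~\ref{qc1}): given $Q:V\to Q(V)$ conformal off $g(A)$, the composition $Q\circ g:U\to Q(V)$ has dilatation at most $L$ off $A$; by Lemma~\ref{qc2} there is an $LK$-qc map $S:U\to Q(V)$ with $S=Q\circ g$ on $\partial U$; then $\tilde Q:=S\circ g^{-1}$ is $L^2K$-qc and agrees with $Q$ on $\partial V$. Your construction is this same two-sided transport, but with an unnecessary detour through the auxiliary self-map $\psi$.

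The detour is not merely cosmetic: your intermediate claim $\mathcal{QD}(\psi(A),U)\le LK$ has a gap. You build $R_0$ conformal outside $A$ with $\mu_{R_0}|_A$ equal to the $\psi$-pullback of $\mu_R|_{\psi(A)}$, obtain a $K$-qc replacement $\tilde R_0$ with $\tilde R_0=R_0$ on $\partial U$, and then propose to ``compose back with the $L$-qc correction'' to get an $LK$-qc replacement of $R$. But you never relate the boundary values of $R_0$ to those of $R$. The only compositional link between them runs through $\psi$ (indeed $R_0$ and $R\circ\psi$ share Beltrami on $A$, so $R\circ\psi\circ R_0^{-1}$ is $L$-qc), and $\psi$ does \emph{not} fix $\partial U$ pointwise---a qc self-map of $U$ with prescribed Beltrami can be normalized at three boundary points, no more. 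Following your sketch one obtains an $LK$-qc map equal to $R\circ\psi$ on $\partial U$, and matching $R$ then costs a further composition with $\psi^{-1}$, i.e.\ another factor of $L$. That yields only $L^2K$ for the intermediate bound and $L^3K$ overall, exactly the ``naive'' estimate you were trying to avoid. The remedy is to discard $\psi$ and argue directly as in the first paragraph.
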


\begin{Lemma}\label{qc2} The following statements are equivalent:
\begin{enumerate}
\item [(i)] $\mathcal{QD}(A,U)=C<\infty$;
\item [(ii)] For any qc map $Q:U \to Q(U)$, if $\mathrm{Dil}(Q)\le K$ for some $K\ge 1$ a.e. outside $A$, then there is a $KC$-qc map $\tilde Q:U \to Q(U)$ such that $\tilde Q=Q$ on $\partial U$.
\end{enumerate}
\end{Lemma}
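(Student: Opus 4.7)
The implication (ii)$\Rightarrow$(i) is a specialization: (i) is precisely (ii) with $K=1$, since $\bar\partial Q=0$ a.e.\ outside $A$ is equivalent to $\mathrm{Dil}(Q)\le 1$ there, and then $KC=C$. So the substance of the equivalence lies in the direction (i)$\Rightarrow$(ii).

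For (i)$\Rightarrow$(ii), the plan is to factor $Q$ through an auxiliary global $K$-qc homeomorphism that absorbs the dilatation of $Q$ outside $A$, thereby reducing to a situation where the hypothesis (i) applies. Concretely, let $\mu_Q$ denote the Beltrami coefficient of $Q$ on $U$ and set $\nu := \mu_Q\cdot \chi_{U\setminus A}$, extended by zero to $\C$. The assumption $\mathrm{Dil}(Q)\le K$ outside $A$ gives $\|\nu\|_\infty \le (K-1)/(K+1)$, so the Measurable Riemann Mapping Theorem provides a $K$-qc homeomorphism $\phi:\C\to\C$ with $\mu_\phi=\nu$ (normalized, say, to fix $\infty$).

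The chain rule for Beltrami differentials then yields $\mu_{Q\circ\phi^{-1}}=0$ a.e.\ on $\phi(U\setminus A)$, so $Q\circ\phi^{-1}:\phi(U)\to Q(U)$ is conformal a.e.\ outside $\phi(A)$. By Lemma~\ref{qc1}, $\mathcal{QD}(\phi(A),\phi(U))\le K^2 C<\infty$, so (i) applied to $Q\circ\phi^{-1}$ on the pair $(\phi(A),\phi(U))$ produces a qc map $\widetilde{Q_1}:\phi(U)\to Q(U)$ agreeing with $Q\circ\phi^{-1}$ on $\partial\phi(U)$ and with dilatation controlled by $K$ and $C$. Setting $\tilde Q := \widetilde{Q_1}\circ \phi:U\to Q(U)$ then gives a qc map with $\tilde Q=Q$ on $\partial U$, since $\phi(\partial U)=\partial \phi(U)$ and $\widetilde{Q_1}=Q\circ\phi^{-1}$ on that boundary.

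The one technical point is to sharpen the resulting dilatation bound from the naive estimate (cubic in $K$) to the linear bound $KC$ asserted in (ii). This requires a more careful Beltrami bookkeeping in the factorization: one arranges $\widetilde{Q_1}$ to be conformal on the image $\phi(U\setminus A)$ so that the dilatations of $\phi$ and $\widetilde{Q_1}$ do not compound off of $A$, leaving only the inside-$A$ dilatation of $\widetilde{Q_1}$ (bounded by $C$) composed with the global $K$-dilatation of $\phi$. The qualitative form of the equivalence, which is all that will be needed in \S\ref{sec:Thurston}, follows immediately from the construction above and this is where I would expect the main subtlety to be purely cosmetic.
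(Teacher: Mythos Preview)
Your overall strategy---factor $Q$ through an auxiliary qc map absorbing the outside-$A$ dilatation, then invoke (i)---is the same as the paper's. The difference is that you \emph{pre}-compose on the source side, while the paper \emph{post}-composes on the target side, and this choice is exactly what costs you the sharp constant.

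Concretely, the paper sets $\mu=\mu_{Q^{-1}}\cdot\chi_{Q(U\setminus A)}$ on the target, solves $\mu_g=\mu$ by the Measurable Riemann Mapping Theorem, and observes that $g\circ Q$ is conformal a.e.\ outside $A$. The point is that the pair $(A,U)$ is \emph{unchanged}, so (i) applies directly with constant $C$: one gets a $C$-qc map $G:U\to g(Q(U))$ with $G=g\circ Q$ on $\partial U$, and then $\tilde Q:=g^{-1}\circ G$ is $KC$-qc. No appeal to Lemma~\ref{qc1} is needed.

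In your route, pre-composing by $\phi$ moves the pair to $(\phi(A),\phi(U))$, forcing you through Lemma~\ref{qc1} and picking up $K^2C$; composing back with $\phi$ then gives $K^3C$. Your proposed sharpening---``arrange $\widetilde{Q_1}$ to be conformal on $\phi(U\setminus A)$''---is not justified: the definition of $\mathcal{QD}$ only hands you \emph{some} map with the right boundary values and a global dilatation bound, with no control over where it is conformal. So as written your argument proves the equivalence only up to replacing $KC$ by $K^3C$. That weaker statement would in fact suffice for the application in \S\ref{sec:Thurston} (one would simply redefine the constant $K'$ there), but it does not prove the lemma as stated. The one-line fix is to move your auxiliary map to the target side.
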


\begin{proof} It is obvious that (ii) implies (i). Let us show that (i) implies (ii).
Let $\mu$ be the Beltrami differential such that $\mu =\bar\partial Q^{-1}/\partial Q^{-1}$ on $Q(U\backslash A)$ and $\mu=0$ otherwise. By the Measurable Riemann Mapping Theorem, there exists $g:\mathbb C \to \mathbb C$ quasiconformal map with Beltrami differential $\mu$. Then $g\circ Q: U\to g\circ Q(U)$ is a quasiconformal map and $\bar\partial g\circ Q=0$ a.e. outside A. Thus there exists a $C$-qc map $G:U\to g\circ Q(U)$ such that $G=g\circ h$ on $\partial U$, where $C=\mathcal{QD}(A,U)<\infty$.
Finally, let $\tilde Q=g^{-1}\circ G$ and we are done.
\end{proof}

We shall now recall the {\em recursively notched square model} developed in~\cite{Kahn}.
Let $S=(0,1)\times (-1/2,1/2)$. Let $\mathcal{I}$ denote the collection of the components of $(0,1)\setminus \mathcal{C}$, where $\mathcal{C}$ is the ternary Cantor set. Let
\begin{equation}\label{eqn:setN}
\mathcal{N}=\bigcup_{I\in\mathcal{I}} \overline{I}\times [-|I|/2, |I|/2]
\end{equation}
which is a countable disjoint union of closed squares.
The following is ~\cite[Lemma 2.1.1]{Kahn}:
\begin{Theorem}\label{thm:Kahn}
$\mathcal{QD}(\mathcal{N}, S)<\infty.$
\end{Theorem}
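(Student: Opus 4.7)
My approach would exploit the self-similarity of the Cantor-comb $\mathcal{N}$. Writing $\phi_L(z)=z/3$ and $\phi_R(z)=(z+2)/3$ for the two holomorphic similitudes of ratio $1/3$ that generate the Cantor dynamics, and $M_0=[1/3,2/3]\times[-1/6,1/6]$ for the central first-generation notch, one has
\[
\mathcal{N}=M_0\cup \phi_L(\mathcal{N})\cup \phi_R(\mathcal{N}),
\]
together with the five-piece decomposition $S=T_+\cup T_-\cup \phi_L(S)\cup M_0\cup \phi_R(S)$, where $T_\pm=(0,1)\times(\pm 1/6,\pm 1/2)$ are the two strips disjoint from $\mathcal{N}$.

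The plan is a recursive self-similar construction. Given a qc map $Q\colon S\to Q(S)$ conformal off $\mathcal{N}$, the map is already $1$-qc on $T_\pm$, while the rescaled maps $Q\circ\phi_L$ and $Q\circ\phi_R$ from $S$ are qc and, by the self-similarity $\phi_L^{-1}(\mathcal{N}\cap\phi_L(S))=\mathcal{N}$, again conformal off $\mathcal{N}$. Assuming tentatively that $K:=\mathcal{QD}(\mathcal{N},S)<\infty$, I would apply the $K$-bound to the rescaled maps and transport the resulting $K$-qc replacements back by $\phi_L^{-1},\phi_R^{-1}$; together with $Q|_{T_\pm}$ this produces $\hat Q$ that is $K$-qc on $S\setminus M_0$ and equals $Q$ on $\partial S\cup\partial M_0$. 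Extending $\hat Q|_{\partial M_0}$ across the quasidisk $M_0$ by an Ahlfors--Beurling or quasiconformal-reflection extension yields a $K'(K)$-qc global replacement, where $K'(K)$ depends on $K$ through the quasisymmetric constant of $\hat Q|_{\partial M_0}$ (controlled by $K$ since $\hat Q$ is $K$-qc on an open quasidisk neighborhood of $\partial M_0$ in $S\setminus M_0$) and the fixed geometry of the square $M_0$.

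To break the circularity of this bootstrap, I would first work with the finite truncations $\mathcal{N}^{(n)}:=\bigcup_{\mathrm{gen}(I)\le n}\overline I\times[-|I|/2,|I|/2]$. Each $\mathcal{N}^{(n)}$ is compactly contained in $S$, so $K_n:=\mathcal{QD}(\mathcal{N}^{(n)},S)<\infty$ by Lemma~\ref{compact}. The self-similar construction above, applied to $\mathcal{N}^{(n)}$, yields a uniform recursive inequality $K_n\le F(K_{n-1})$, because the side-pieces $\phi_L(S),\phi_R(S)$ contain only generation-$(\le n-1)$ notches after unscaling by $\phi_L^{-1},\phi_R^{-1}$. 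A normal-family limiting argument then transfers the bound to $\mathcal{N}$ itself: given $Q$ conformal off $\mathcal{N}$, approximate it by qc maps $Q_k$ conformal off $\mathcal{N}^{(n_k)}$ with the same boundary values on $\partial S$ (for instance by conformally filling the generation-$>n_k$ notches of $Q$), apply the uniform bound $K^*:=\sup_n K_n$ to obtain $K^*$-qc replacements $\tilde Q_k$, and extract a subsequential limit.

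The main obstacle, and the technical heart of the argument, is the contractivity of the recursion: one must arrange $F(K)\le K$ for $K$ large so that $\sup_n K_n<\infty$. This requires precise control on the dilatation $K'(K)$ introduced by the extension across $M_0$, using the fixed quasidisk geometry of $M_0$ and the definite conformal modulus of $S\setminus M_0$; the key point is that $K'(K)$ must depend only mildly (e.g.\ in a sublinear or logarithmic way) on the ambient dilatation $K$, so that the recursion admits a bounded attracting fixed point. This is exactly the place where the explicit structure of the recursively notched square is exploited, rather than just the compactness of individual notches.
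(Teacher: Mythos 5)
The paper does not actually prove this statement: it is quoted directly from \cite{Kahn} (Lemma 2.1.1 there), so your argument has to stand entirely on its own. Your setup is fine — the self-similar decompositions of $\mathcal N$ and $S$, the finiteness of $K_n=\mathcal{QD}(\mathcal N^{(n)},S)$ via Lemma~\ref{compact}, and the normal-families passage from $\sup_n K_n<\infty$ to the full statement are all sound. The gap is exactly at the step you flag as the ``technical heart'': the recursion does not close. After you replace $Q$ on $\phi_L(S)$ and $\phi_R(S)$ by the inductively obtained $K_{n-1}$-qc maps, the map $\hat Q$ whose boundary values you must extend across $M_0$ is only $K_{n-1}$-quasiconformal on the ring $S\setminus M_0$, and nothing better is available near the vertical edges of $M_0$: the smaller notches of $\mathcal N^{(n)}$ accumulate at the points $(1/3,0)$ and $(2/3,0)$ of $\partial M_0$, so $Q$ itself is not conformal on any one-sided neighborhood of $\partial M_0$ and the inductive replacement is all you have there. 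The boundary trace of a $K$-qc map of a ring of fixed modulus on a fixed quasicircle is in general only $\lambda(K)$-quasisymmetric, where $\lambda$ is the quasiconformal distortion function, which grows exponentially in $K$; and the Beurling--Ahlfors extension of a $\rho$-quasisymmetric map has dilatation comparable to $\rho$. So the bound that ``the fixed quasidisk geometry of $M_0$ and the definite modulus of $S\setminus M_0$'' actually delivers is $K_n\le K'(K_{n-1})$ with $K'$ at least linear and generically exponential; this recursion has no bounded fixed point, and $\sup_n K_n$ is not controlled. You give no mechanism for the sublinearity of $K'$, and none exists at this level of generality.

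What would be needed to rescue the scheme is a proof that the trace $Q|_{\partial M_0}$ is quasisymmetric with a constant independent of $n$ and of $\mathrm{Dil}(Q)$; away from the two accumulation points this follows from conformality of $Q$ on a definite one-sided neighborhood, but at $(1/3,0)$ and $(2/3,0)$ it is essentially equivalent to the theorem itself. This is why Kahn's actual argument is not a generation-by-generation bootstrap of dilatation: the uniform control at the accumulation points has to be extracted from the conformality of $Q$ on the complement of \emph{all} of $\mathcal N$ simultaneously (via extremal-length/modulus estimates in the two comb-shaped components of $S\setminus\overline{\mathcal N}$), not from the quasiconformality inherited from the previous generation. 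As written, your proof reduces the theorem to an unproved — and, via the route you propose, false — contractivity claim, so it does not constitute a proof.
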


\subsection{Reduce to the toy model}
We will work on the polynomial map $f_0$ fixed at the beginning of this section. In the following we write $\mathcal{R}(\theta)$ for $\mathcal{R}_{f_0}(\theta)$.
A {\em geometric ray-pair} is, by definition, a simple curve consisting of two distinct external rays together with their common landing point.
A {\em slice} is an open set $U$ bounded by two disjoint ray-pairs $\mathcal{R}(\theta_i)\cup \mathcal{R}(\theta_i')\cup \{a_i\}$, $i=1,2$ such that no external ray lying inside $U$ lands at either $a_1$ or $a_2$.

For every $z_0 \in Z$, the external rays landing at $z_0$ cut the complex plane $\mathbb C$ into finitely many sectors $S_1(z_0),\cdots,S_{n(z_0)}(z_0)$. Let $\mathcal S=\{S_j(z)\mid z\in Z,~1\le j\le n(z) \}$.  We list the elements in $\mathcal S$ as $S_1, S_2,\cdots, S_{\nu}$ where $\nu=\# \mathcal S$.  For each $j$, the boundary of $S_j$ is a geometric ray-pair:
there exists $\alpha^j\in Z$ and $\theta^-_j,\theta^+_j\in\mathbb{R}/\mathbb{Z}$ such that
$$\partial S_j=\mathcal R(\theta^-_j)\cup \{\alpha^j\}\cup \mathcal R(\theta^+_j).$$
We order $\theta^-_j, \theta^+_j$ in such a way that
$$\{t\in \mathbb{R}/\mathbb{Z}: \mathcal{R}(t)\subset S_j\}=(\theta_j^-, \theta_j^+).$$

\begin{Proposition}\label{prop:slice} For each $j\in \{1,2,\ldots, \nu\}$ and each $n$ sufficiently large, there exists a geometric ray-pair $\gamma_n^j=\mathcal{R}(t_n^-(j))\cup \mathcal{R}(t_n^+(j))\cup \{\alpha_n^j\}$ contained in $S_j$ with the following properties:
\begin{itemize}
\item $\alpha_n^j\in f_0^{-n}(Z)$;
\item $\theta^-_j, t_n^-(j), t_n^+(j), \theta^+_j$ lie in $\mathbb{R}/\mathbb{Z}$ in the anticlockwise order;
\item $\partial S_j$ and $\gamma_n^j$ bound a slice;
\item $t_n^-(j)\to \theta_j^-$, $t_n^+(j)\to \theta_j^+$ as $n\to\infty$.
\end{itemize}
\end{Proposition}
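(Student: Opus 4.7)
The plan is to take $\alpha_n^j$ to be a preimage of some point of $Z$ inside $S_j$ close to $\alpha^j$, and let $\gamma_n^j$ be the pair of outermost external rays landing at $\alpha_n^j$. Since $\alpha^j\in Z\subset J(f_0)$ is (pre-)periodic and repelling, $f_0(Z)\subset Z$ yields the monotonicity $f_0^{-m}(Z)\subset f_0^{-n}(Z)$ for $m\le n$, and $\bigcup_k f_0^{-k}(Z)$ is dense in $J(f_0)$; hence for each $n$ large we can choose $\alpha_n^j\in f_0^{-n}(Z)\cap S_j$ with $\alpha_n^j\to \alpha^j$. To gain angular control I would produce this sequence via a local holomorphic inverse branch: pass to a high iterate $F=f_0^M$ which fixes the periodic point $p$ in the orbit of $\alpha^j$ together with every external ray landing on $p$, take the contracting inverse branch of $F$ at $p$, and pull it back through the local inverse of $f_0^m$ at $\alpha^j$ (available since hyperbolicity places $\mathrm{Crit}(f_0)$ in the Fatou set, so $\alpha^j\in J(f_0)$ avoids the critical set of every iterate). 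This yields a holomorphic contraction $g$ fixing $\alpha^j$ that preserves each of $\mathcal{R}(\theta_j^\pm)$ locally, hence preserves $S_j$ near $\alpha^j$.

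At each $\alpha_n^j$, hyperbolicity gives $\alpha_n^j\notin \mathrm{Crit}(f_0^n)$, so the external rays landing at $\alpha_n^j$ biject with those landing at $f_0^n(\alpha_n^j)\in Z$ and there are at least two. Since distinct external rays are pairwise disjoint and $\partial S_j=\mathcal{R}(\theta_j^-)\cup\{\alpha^j\}\cup\mathcal{R}(\theta_j^+)$ separates $\mathbb{C}$ into exactly two components, any ray landing at $\alpha_n^j\in S_j$ lies entirely in $S_j$ and has angle in $(\theta_j^-,\theta_j^+)$. Let $t_n^-(j)$ and $t_n^+(j)$ be the smallest and largest such angles and $\gamma_n^j=\mathcal{R}(t_n^-(j))\cup\{\alpha_n^j\}\cup\mathcal{R}(t_n^+(j))$ the corresponding ray-pair. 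The ray-pairs $\partial S_j$ and $\gamma_n^j$ have distinct landing points and distinct angles, hence are disjoint; by the extremal choice of $t_n^\pm(j)$ no ray landing at $\alpha_n^j$ has angle outside $[t_n^-(j),t_n^+(j)]$, and by the definition of the sector no external ray of angle in $(\theta_j^-,\theta_j^+)$ lands at $\alpha^j$. Therefore $\partial S_j$ and $\gamma_n^j$ bound a slice.

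For the convergence $t_n^\pm(j)\to \theta_j^\pm$ I would use that $J(f_0)$ is locally connected (hyperbolic postcritically finite), so external ray landing is continuous. Along any subsequence where $t_n^-(j)$ or $t_n^+(j)$ converges to some $t^*$, continuity gives $\mathcal{R}(t^*)$ landing at $\alpha^j$; combined with $t^*\in[\theta_j^-,\theta_j^+]$ and the sectoriality at $\alpha^j$ this forces $t^*\in\{\theta_j^-,\theta_j^+\}$. To rule out the degenerate case in which both $t_n^-(j)$ and $t_n^+(j)$ converge to the same endpoint, I would use the $g$-construction: since $g$ preserves each of $\mathcal{R}(\theta_j^\pm)$ near $\alpha^j$ and strictly contracts, iterating $g$ on an initial preimage of $Z$ having at least one ray in $S_j$ close to each of $\mathcal{R}(\theta_j^\pm)$ produces a sequence whose outermost ray angles split into two asymptotic classes converging separately to $\theta_j^-$ and $\theta_j^+$. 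The main obstacle is arranging this two-sided angular convergence, which requires a careful choice of the initial preimage so that rays on both sides of the sector are represented.
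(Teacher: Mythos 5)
Your construction is genuinely different from the paper's: you fix a preimage $\alpha_n^j\in f_0^{-n}(Z)\cap S_j$ first and take the outermost rays of its full bouquet, whereas the paper works with the nested puzzle pieces $Y_n^j$ of depth $n$ attached to $\alpha^j$ inside $S_j$ and defines $t_n^\pm(j)$ as the extremal angles of rays meeting $Y_n^j$ on either side. The two routes trade difficulties: in your setup the ray-pair property (common landing point) is automatic and the burden falls on two-sided convergence; in the paper's setup the convergence is nearly automatic (since $\mathrm{diam}(Y_n^j)\to 0$ and $\alpha^j\in\overline{Y_n^j}$, any limit of $t_n^\pm(j)$ must be an angle of a ray landing at $\alpha^j$, and monotonicity plus the sector structure pins down which endpoint), while the burden falls on proving that the two extremal boundary rays of $Y_n^j$ actually land at the same point (the paper's Claim~1, proved by induction on depth using the combinatorics of the puzzle).

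The problem is that you do not discharge the burden your route creates, and it is the heart of the proposition. Your argument correctly shows that every subsequential limit of $t_n^-(j)$ or $t_n^+(j)$ lies in $\{\theta_j^-,\theta_j^+\}$, but nothing you write excludes the degenerate scenario in which the entire bouquet at $\alpha_n^j$ collapses onto one boundary ray, i.e.\ $t_n^-(j)$ and $t_n^+(j)$ both tend to $\theta_j^-$ (say). The sentence ``iterating $g$ on an initial preimage of $Z$ having at least one ray in $S_j$ close to each of $\mathcal{R}(\theta_j^\pm)$ produces a sequence whose outermost ray angles split into two asymptotic classes converging separately to $\theta_j^-$ and $\theta_j^+$'' is an assertion, not a proof: (a) the existence of such an initial point $\alpha_0$ with rays near both ends of the sector is itself a weak form of what must be shown; (b) even granting it, the angle of the pulled-back ray at $\alpha_{n+1}^j$ is some $m_{d^M}$-preimage of the angle at $\alpha_n^j$ lying in $(\theta_j^-,\theta_j^+)$, and there are many such preimages; you need to identify which inverse branch of $m_{d^M}$ is the combinatorially correct one (the one whose ray actually passes through $g(\alpha_n^j)$) and show it is the branch fixing, and contracting towards, the appropriate endpoint $\theta_j^\pm$. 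That matching of the geometric pullback of landing points with the combinatorial pullback of angles is exactly the content that needs an argument, and it is what the paper's puzzle-piece induction supplies. As written, the proof is incomplete at this step; you would either need to carry out this angular bookkeeping in detail, or switch to controlling the nested puzzle pieces $Y_n^j$ directly as the paper does.
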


We postpone the proof of this proposition to the end of this section and show now how it implies Theorem~\ref{thm:Kqcpuzzle}.

\begin{proof}[Proof of Theorem 3']
For each $n$ large, let $\widehat{S}_n^j$ be the slice bounded by $\gamma_n^j$ and $\partial S_j$ given by Proposition~\ref{prop:slice}, and let $S_n^j=\{z\in \widehat{S}_n^j: G(z)<1/d^n\}$, where $G$ is the Green function of $f_0$. So $\overline{S_n^j}$ is a finite union of closures of puzzle pieces of depth $n$. Choose $N_*$ sufficiently large so that the closure of $S^j:=S^j_{N_*}$ is disjoint from the post-critical set of $f_0$. Let $q\gg N_*$ be a positive integer so that for any $j,j'$, the diameter of each component of $f_0^{-q}(S^j)$ is much smaller than that of $S^{j'}$.
For each $j\in \{1,2,\ldots,\nu\}$, $f_0^q$ maps a neighborhood $Q_j$ of $\alpha^j$ conformally onto the component of the interior of $\bigcup_{j'=1}^ {\nu} \overline{S^{j'}}$ which contains $f_0^q(\alpha_j)$. Let $A_j=Q_j\cap S^j$. Then $A_j$ is a quasi-disk which contains $\alpha^j$ in its boundary and there is $\sigma(j)\in \{1,2,\ldots, \nu\}$ such that $f_0^q(A_j)=S^{\sigma(j)}$. Similarly, there is a quasi-disk $B_j$ which is contained in $S^j$ and contains $\alpha_{N_*}^j$ in its boundary and $\tau(j)\in \{1,2,\ldots, \nu\}$ such that $f_0^q(B_j)=S^{\tau(j)}$.
Choosing $q$ large enough, we can ensure that $\overline{A_j}\cap\overline{B_j}=\emptyset$.
Note that $m_d^q(\theta^+_j)=\theta^+_{\sigma(j)}, m_d^q(\theta^-_j)=\theta^-_{\sigma(j)}$,
$m_d^q(t_{N_*}^-(j))=\theta^+_{\tau(j)}$ and $m_d^q(t_{N_*}^+(j))=\theta^-_{\tau(j)}$, where $m_d:\R/\Z\to \R/\Z$ denotes the map $t\mapsto dt \mod 1$.
Let $$F:\bigcup_{j=1}^{\nu} \overline{A_j}\cup\overline{B_j}\to \bigcup_{j=1}^{\nu} \overline{S^j}$$ be the restriction of $f_0^q$.
Let $A=(0,1/3)\times (-1/6, 1/6)$, $B=(2/3, 1)\times (-1/6,1/6)$ and $S=(0,1)\times (-1/2,1/2)$ and define a map
$$G: \bigcup_{j=1}^\nu \{j\}\times (A\cup B) \to \bigcup_{j=1}^\nu \{j\}\times S,$$
as follows:
$$G(j,z)=\left\{\begin{array}{ll}
(\sigma(j), 3z), &\mbox{ if } z\in A;\\
(\tau(j), 3(1-z)), &\mbox{ if } z\in B.
\end{array}
\right.
$$
Let $C_j=\overline{\{z\in S^j\setminus (\overline{A_j\cup B_j}): G(z)\le d^{-q-N_*}\}}$
and $C=[1/3,2/3]\times [-1/6, 1/6]$.

{\bf Claim.} There is a qc homeomorphism
$H_:\bigcup_{j=1}^\nu S^j\to \bigcup_{j=1}^\nu \{j\}\times S$ such that
\begin{enumerate}
\item[(i)] $H(A_j)=\{j\}\times A, H(B_j)=\{j\}\times B, H(C_j)=\{j\}\times C,$
\item[(ii)] $H\circ F=G\circ H$ holds on $\bigcup_{j=1}^\nu \overline {A_j\cup B_j}$.
\end{enumerate}
Indeed, it suffices to prove there is qc map
$H_0:\bigcup_{j=1}^\nu S^j\to  \bigcup_{j=1}^\nu \{j\}\times S$ such that (i) holds and (ii) holds on $\bigcup_{j} (\partial A_j\cup \partial B_j)$ (with $H$ replaced by $H_0$). Indeed, once such a $H_0$ is constructed, we can construct inductively a sequence $\{H_n\}_{n=0}^\infty$ of qc maps by pull-back which has the following properties:
\begin{itemize}
\item $H_{n+1}=H_n$ on $\bigcup_{j=1}^\nu S^j \setminus (A_j\cup B_j);$
\item $H_{n}\circ F=G\circ H_{n+1}$ holds on $\bigcup_{j=1}^\nu \overline {A_j\cup B_j}$.
\end{itemize}
These maps $H_n$ have the same maximal dilatation as $H_0$, and they eventually stablize for any point in the set $X=\{z\in \bigcup_{j=1}^\nu S^j: F^n(z)\not\in \bigcup A_j\cup B_j\mbox{ for some }n\}$. Since $F$ is uniformly expanding, the set $X$ is dense in $\bigcup_j S^j$, it follows that $H_n$ converges to a qc map $H$ which satisfies the requirements.
For the existence of $H_0$, a concrete construction of a homeomorphism  with the desired properties can be easily done using B\"ottcher coordinate with an extra property that it is qc in $S^j\setminus \overline{A_j\cup B_j\cup C_j}$. It can be made global qc since $S^j, A_j, B_j, C_j$ are all quasi-disks.

Now, let
$$\mathcal{N}_j=\{z\in S^j: \exists n\ge 1\text{ such that }F^n(z) \text{ is well-defined and belongs to} \bigcup_{j'} C_{j'}\}.$$
Note that for each $j$,
$H(\mathcal{N}_j)=\{j\}\times\mathcal{N},$
where $\mathcal{N}$ is as in (\ref{eqn:setN}). Therefore,
$$Q:=\max_{j=1}^\nu \mathcal{QD}(\mathcal{N}_j, S^j)<\infty.$$

Let $N=q+N_*$. Then any landing domain of $Y_{N}:=\bigcup_{c\in\Crit(f_0)} Y_N(c)$ does not intersect $\bigcup_{k=0}^{q-1}\bigcup_{j=1}^\nu f_0^k(\partial A_j\cup\partial B_j)$.
Therefore, $L_N\cap S^j\subset \mathcal{N}_j,$
so that $$\mathcal{QD}(L_N\cap S^j, S^j)\le \mathcal{QD}(\mathcal{N}_j, S^j)\le Q.$$

Now let $Y$ be an arbitrary Yoccoz puzzle piece of depth $m\ge 0$.
Similarly as in the construction for $A_j$ and $B_j$ above, for each $x\in \partial Y\cap J(f)$, there is a quasi-disk $V_x$ which is contained in $Y$ and contains $x$ in its closure such that $f_0^{m}$ maps $V_x$ onto $S^{j(x)}$ for some $j(x)\in \{1,2,\ldots, \nu\}$. Since $\overline{S^{j(x)}}$ is a finite union of the closure of puzzle pieces of depth $N_*$ and disjoint from the postcritical set of $f_0$, for each $k=0,1,\ldots, m-1$, $f^k(V_x)$ is a finite union of the closure of puzzle pieces of depth $N_*+m-k$ ($< m+N$) and does not contain a critical point of $f_0$, so $f^k(V_x)\cap Y_{m+N}=\emptyset$. It follows that $f_0^{m}$
maps $V_x\cap L_{m+N}$ onto $S^{j(x)}\cap L_{m+N}$. Therefore
$$\mathcal{QD}(L_{m+N}\cap V_x, V_x)= \mathcal{QD}(L_{m+N}\cap S^{j(x)}, S^{j(x)})\le \mathcal{QD}(L_N\cap S^{j(x)}, S^{j(x)})\le Q.$$
These $V_x$'s are pairwise disjoint since each of them is mapped onto of a component of $\bigcup_j S^j$ univalently under $f_0^{m}$
Noting that
$(Y\cap L_{m+N})\setminus \bigcup_{x\in \partial Y\cap J(f)}V_x$ is compactly contained in $Y$, we conclude that $\mathcal{QD}(L_{m+N}\cap Y, Y)<\infty$.
\end{proof}

\begin{proof}[Proof of Proposition~\ref{prop:slice}]
We denote by $Y^j_n$ the unique puzzle piece of depth $n$ which attaches $\alpha^j$ and is contained in the sector $S_j$.
By Proposition~\ref{prop:noncrpiece}, when $n$ is sufficiently large, $\overline{Y_n^j}$ is disjoint from the postcritical set of $f_0$. Fix $n_0$ large so that for each $j$, there exists $\alpha^j_{n_0} \in \overline Y^j_{n_0}\cap J(f_0)$ with $\alpha^j_{n_0} \not\in Z$.
Let $\mathcal R(s^-_j)$ and $\mathcal R(s^+_j)$ be the external rays landing at $\alpha^j_{n_0}$ which are the boundary curves of $Y^j_{n_0}$. We assume that $\theta^-_j, s^-_j, s^+_j, \theta^+_j$ lie in $\mathbb{R}/\mathbb{Z}$ in the anticlockwise cyclic order.

  For $n\ge n_0$, we define two angles $t^-_n(j)$ and $t^+_n(j)$ as following:
\[t^-_n(j):=\sup\{\theta\in(\theta^-_j,s^-_j)\mid \mathcal R(\theta)\cap Y^j_{n}\ne \varnothing\}\]
and
\[t^+_n(j):=\inf\{\theta\in(s^+_j,\theta^+_{j})\mid \mathcal R(\theta)\cap Y^j_{n}\ne \varnothing\}.\]

\medskip
{\bf Claim 1.} {\em For every $1\le j\le \nu$ and any $n\ge n_0$, the two external rays $\mathcal R(t^-_n(j))$ and $\mathcal R(t^+_n(j))$ land at the same point, denoted by $\alpha_n^j$.}

Let $\mathcal R^n(t)=\mathcal R(t)\cap \{z\mid G_{f_0}(z)<d^{-n}\}$.
Clearly, $\mathcal R^n(t^-_n(j))$ and $\mathcal R^n(t^+_n(j))$ are on the boundary of $Y^j_n$ and are closest rays (on the boundary of $Y^j_n$) to $\mathcal R(s^-_j)$ and $\mathcal R(s^+_j)$ respectively. First, we show that $\mathcal R(t^-_{n_0+1}(j))$ and $\mathcal R(t^+_{n_0+1}(j))$ land at the same point.

{\em Case 1.} $\alpha_{n_0}^j\in \overline{Y_{n_0+1}^j}$. Then $t^{\pm}_{n_0+1}(j)=s^{\pm}_j$, and so claim holds.

{\em Case 2.} $\alpha_{n_0}^j\not\in \overline{Y_{n_0+1}^j}$. Then there exists $t^-$ and $t^+$ such that
\begin{itemize}
\item $\mathcal{R}(t^{\pm})$ intersects the boundary of $Y_{n_0+1}^j$ with a common landing point $\alpha\not\in \{\alpha_{n_0}^j,\alpha^j\}$;
\item $\mathcal{R}(t^+)\cup \mathcal{R}(t^-)\cup\{\alpha\}$ separates $\alpha_{n_0+1}^j$ from $\alpha^j$.
\end{itemize}
Without loss of generality, assume $t^-\in (\theta_j^-, s_j^-)$. Then $t^+\in (s_j^+,\theta_j^+)$. So for any $t\in (t_-, s_j^-)$, $\mathcal{R}(t)$ is disjoint from $Y_{n_0+1}^j$, hence $t^-=t_{n_0+1}^j$. Similarly, $t^+=t_{n_0+1}^j$. Thus $t_{n_0+1}^-\sim_{\lambda_{f_0}} t_{n_0+1}^+$.

The general case can be proved similarly by induction.

It is clear that $\partial S_j$ and $\gamma_n^j=\mathcal{R}(t_-^n(j))\cup\mathcal{R}(t_+^n(j))\cup \{\alpha_n^j\}$ bound a slice for each $n\ge n_0$. So it remains to show

\medskip
{\bf Claim 2.} {\em The sequence $\{t^-_n(j)\}_{n>n_0}$ decreases monotonically to $\theta^-_j$ and $\{t^+_n(j)\}_{n>n_0}$ increases monotonically to $\theta^+_{j}$ for all $j$.}
\medskip

Since $\mathrm{diam}(Y^j_n)$ converges to $0$, $\alpha^j_n \to \alpha^j$ . Obviously, $\{t^-_n(j)\}$ is monotonically decreasing, thus it converges to some $\theta\in[\theta^-_j,s^-_j]$. If $\theta \ne \theta^-_j$, then $\alpha^j_n$ converges to the landing point of $\mathcal R(\theta)$ which is not equal to $\alpha^j$. This leads to a contradiction.
\end{proof}

\section{Thurston's Algorithm}\label{sec:Thurston}
\subsection{Thurston's Algorithm}
The Thurston algorithm was introduced by Thurston to construct rational maps that is combinatorially equivalent to a given branched covering of the 2-sphere. See \cite{DH3}.
The algorithm goes as follows. Let $\widetilde{f}:\CC\to \CC$ be a quasi-regular map of degree $d>1$.
Given a  qc map  $h_0:\CC\to \CC$, let $\sigma_0$ be the standard complex structure on $\CC$ and $\sigma=(h_0\circ\tilde f)^*\sigma_0$. By the Measurable Riemann Mapping Theorem, there exists a qc map $h_1:\CC\to \CC$ with $h_1^*\sigma_0=\sigma$ so that $Q_0:=h_0\circ \widetilde{f}\circ h_1^{-1}$ is a rational map of degree $d$. The qc map $h_1$ is unique up to composition with a M\"obius transofrmation. Applying the same argument to $h_1$ instead of $h_0$, we obtain a qc map $h_2$ and a rational map $Q_1$ of degree $d$ such that $Q_1\circ h_2=h_1\circ \widetilde{f}$. Repeating the argument, we obtain a sequence of normalized qc maps $\{h_n\}_{n=0}^\infty$ and a sequence of rational maps $\{Q_n\}_{n=0}^\infty$ of degree $d$ such that $Q_n\circ h_{n+1}=h_{n}\circ \widetilde{f}$. The question is to study the convergence of $\{h_n\}_{n=1}^\infty$ and $\{Q_n\}_{n=1}^\infty$ after suitable normalization.

In \cite{R-L}, Rivera-Letelier applied the algorithm to a certain class of quasi-regular maps which may have non-recurrent branched points with infinite orbits.
In this section, we shall modify his argument and prove a convergence theorem for a quasi-regular map $\widetilde{f}:\CC\to \CC$ where the irregular part has a nice Markov structure.

For simplicity, we shall assume that $\widetilde{f}$ satisfies the following: $\widetilde{f}^{-1}(\infty)=\infty$ and $\widetilde{f}$ is holomorphic in a neighborhood of $\infty$. Below, we shall use the terminology {\em quasi-regular polynomial} for such a map.

An open set $\mathcal{B}$ is called {\em nice} if each component of $\mathcal{B}$ is a Jordan disk and $\widetilde{f}^k(\partial \mathcal{B})\cap \mathcal{B}=\emptyset$ for each $k\ge 1$. Let
$$D(\mathcal{B})=\{z\in \C: \exists n\ge 1\text{ such that } \widetilde{f}^n(z)\in\mathcal{B}\}$$
denote the domain of the first entry map to $\mathcal{B}$.
We say that a nice open set $\mathcal{B}$ is {\em free} if
$$P(\widetilde{f})\cap\overline{\mathcal{B}}=\emptyset,$$
where
$$P(\widetilde{f})=\overline{\bigcup_{c\in \Crit(\widetilde{f})}\bigcup_{n\ge 1} \{\widetilde{f}^n(c)\}},$$
and $\Crit(\widetilde{f})$ denotes that set of the ramification points of $\widetilde{f}$. We say that an open set $\mathcal{B}$ is
{\em $M$-nice} if it is nice and for each component $B$ of $\mathcal{B}$, the following three conditions hold:
\begin{equation}\label{eqn:shapeB}
\text{diam}(B)^2\le M \text{area} (B);
\end{equation}
\begin{equation}\label{eqn:Bretsmall}
\frac{\mathrm{area}(B\setminus D(\mathcal{B}))}{\mathrm{area}(B)}>M^{-1};
\end{equation}
\begin{equation}\label{eqn:Bretqc}
\mathcal{QD}(D(\mathcal B)\cap B, B)\le M,
\end{equation}
where $\mathcal{QD}$ is as defined in \S\ref{sec:Kahn}.

\begin{Theorem}\label{thm:Thurston}
Let $\widetilde{f}:\CC\to \CC$ be a quasi-regular polynomial of degree $d\ge 2$ and let $A\subset \CC$ be a Borel set such that $\bar\partial \tilde f=0$ a.e. outside $A$. Assume that there is a free open set $\mathcal{B}$ which is $M$-nice for some $M<\infty$  and a positive integer $T$ such that
$$(\ast) \text{ for every }z\text{ and }n\ge 1, \text{ if }\widetilde{f}^j(z)\not\in\mathcal{B}\text{ for each }0\le j <n,\text{ then  }\#\{0\le k<n:  \widetilde f^k(z) \in A\}\le T.$$
Then there is a continuous surjection $h:\CC\to \CC$ and a rational map $f:\CC\to \CC$ of degree $d$ such that $f\circ h=h\circ \tilde f$. Moreover, there is a qc map $\lambda_0$ such that $\lambda_0(z)=h(z)$ whenever $z\not\in \bigcup_{n=0}^\infty \widetilde{f}^{-n}(\mathcal{B})$ and such that
$\bar{\partial} \lambda_0=0$ holds a.e. on the set $\{z\in \CC: \widetilde{f}^n(z)\not\in A, \forall n\ge 0\}$.
\end{Theorem}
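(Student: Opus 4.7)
The strategy is to run the Thurston pullback algorithm on $\widetilde f$ starting from $h_0 = \mathrm{id}$, producing normalized qc polynomials $h_n$ (tangent to the identity at $\infty$) and monic centered polynomials $Q_n$ of degree $d$ with $Q_n \circ h_{n+1} = h_n \circ \widetilde f$. Writing $K = \operatorname{Dil}(\widetilde f)$ and $N_n(z) := \#\{0 \le k < n : \widetilde f^k(z) \in A\}$, the Beltrami chain rule gives $\operatorname{Dil}_{h_n}(z) \le K^{N_n(z)}$. Setting $D := \bigcup_{k \ge 0} \widetilde f^{-k}(\mathcal B)$, if $z \notin D$ the forward orbit of $z$ avoids $\mathcal B$, so hypothesis $(\ast)$ forces $N_n(z) \le T$ for every $n$; hence $\{h_n\}$ is uniformly $K^T$-qc on $\CC \setminus D$ and its pointwise Beltrami stabilizes at finite $n$ to a bounded field $\mu^*$.

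\textbf{Construction of $\lambda_0$.} I would define a global $L^\infty$ Beltrami coefficient $\nu$ and take $\lambda_0$ to be its normalized integrating map. On $\CC \setminus \mathcal B$ set $\nu := \mu^*$. Freeness $P(\widetilde f) \cap \overline{\mathcal B} = \emptyset$ implies that for each $k \ge 1$ and each component $W$ of $\widetilde f^{-k}(\mathcal B)$ disjoint from $\mathcal B$, no iterate $\widetilde f^i|_W$ with $0 \le i < k$ contains a critical point (otherwise some critical $c$ with $\widetilde f^{k-i}(c) \in B \in \mathcal B$ would contradict freeness), so $\widetilde f^k : W \to B$ is a proper qc homeomorphism and $\nu|_W$ inherits boundedness from $\nu|_B$. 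On a component $B$ of $\mathcal B$ itself, the same argument gives $\mu^*$ well-defined and $K^T$-bounded on $B \setminus D(\mathcal B)$; the remaining $B \cap D(\mathcal B)$ is handled by Kahn's distortion bound (Lemma~4.2), which using $\mathcal{QD}(D(\mathcal B) \cap B, B) \le M$ from (\ref{eqn:Bretqc}) replaces the (possibly unbounded) integrating map on $B$ by an $M K^T$-qc map with the same boundary values. Gluing produces $\lambda_0$ globally qc on $\CC$, automatically conformal a.e.\ on $\{z : \widetilde f^n(z) \notin A\ \forall n \ge 0\}$ where $\nu \equiv 0$.

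\textbf{Convergence and semi-conjugacy.} To produce the continuous surjection $h$, I extract a uniform limit of $\{h_n\}$. On $\CC \setminus D$ this is given by the $K^T$-qc bound of paragraph~1. For a pull-back component $W$ of $\mathcal B$ of depth $k$, the shape bound (\ref{eqn:shapeB}) and the return-area bound (\ref{eqn:Bretsmall}) together force $\operatorname{diam}(h_n(W)) \to 0$ as $k \to \infty$, uniformly in $n$, which yields equicontinuity and hence subsequential uniform convergence $h_n \to h$ on all of $\CC$ to a continuous surjection. The $Q_n$ are monic centered polynomials of degree $d$ with coefficients bounded in terms of the uniform qc bounds on the Step~2--modified versions of $h_n$ (which agree with $h_n$ outside $\mathcal B$), so a subsequence $Q_{n_k} \to f$ converges to a polynomial of degree $d$; passing to the limit in $Q_{n_k} \circ h_{n_k+1} = h_{n_k} \circ \widetilde f$ yields $f \circ h = h \circ \widetilde f$. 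Since the modifications of Step~2 are supported inside $\mathcal B$, one checks $\lambda_0 = h$ on $\CC \setminus D$.

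\textbf{Main obstacle.} The principal difficulty is uniform continuity of $\{h_n\}$ on the grand orbit $D$, where the bare Thurston algorithm yields no qc bound because dilatations blow up under infinitely many returns to $\mathcal B$. This is precisely where all three pieces of $M$-niceness enter: the shape inequality keeps qc modifications geometrically controlled, the return-area inequality drives the diameters of deep pull-backs to $0$ (giving equicontinuity for $h_n$), and Kahn's $\mathcal{QD}$-bound caps the dilatation of the Beltrami modification. A secondary challenge is the coherence of the modification under iteration; here freeness of $\mathcal B$ is essential, since it guarantees that preimage components at every depth are critical-point free, so that the qc-dilatation control transfers consistently from $\mathcal B$ to the whole of $D$.
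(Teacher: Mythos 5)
Your high-level architecture matches the paper's: Thurston pullback from $h_0=\mathrm{id}$, dilatation control of $h_n$ via $(\ast)$ off the grand orbit $D$ of $\mathcal{B}$, Kahn's $\mathcal{QD}$ bound to tame the return domains, and the shape/area bounds of $M$-niceness to shrink deep pullback components. But the central construction has a genuine gap. Your ``construction of $\lambda_0$'' by gluing a Beltrami field $\nu$ and integrating is incoherent: the set $\CC\setminus\mathcal{B}$, on which you set $\nu=\mu^*$, contains all points of $D\setminus\mathcal{B}$ whose orbits return to $\mathcal{B}$ (hence to $A$) infinitely often, and for these the Beltrami coefficients of $h_n$ neither stabilize nor stay bounded, so $\mu^*$ does not exist there. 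The attempted repair via Kahn's bound mixes categories: $\mathcal{QD}(D(\mathcal{B})\cap B,B)\le M$ replaces a given qc \emph{map} on $B$ with controlled dilatation off $D(\mathcal{B})\cap B$ by a uniformly qc map with the same \emph{boundary values}; it does not output a Beltrami coefficient that can be spliced into a global field and integrated. The object to which the $\mathcal{QD}$ bound must be applied is $h_k\circ h_n^{-1}$ restricted to each component $W$ of the complement of $h_n(\widetilde{\mathcal{L}}(n))$, where $\widetilde{\mathcal{L}}(n)=\{z:\widetilde f^j(z)\notin\mathcal B\ \forall j\ge n\}$: an iterate carries $W$ homeomorphically onto a component $B$ of $\mathcal{B}$, the pullback of $D(\mathcal{B})\cap B$ is exactly where the dilatation is uncontrolled, off it $(\ast)$ gives the bound $K^{2T+1}$, and Lemmas~\ref{qc1} and~\ref{qc2} then yield a uniformly qc replacement $h_{k,n}$ glued along $\partial W$. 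The limits $\lambda_n=\lim_j h_{k_j,n}$, with the consistency $\lambda_n\circ h_n=\lambda_k\circ h_k$ on $\widetilde{\mathcal{L}}(n)$, are what ultimately furnish $\lambda_0$; they cannot be written down in advance as an integrating map of a prescribed field.

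The convergence step is also unsubstantiated. A pointwise dilatation bound $K^T$ on the (typically nowhere dense) set $\CC\setminus D$ gives no modulus of continuity for $h_n$ there, so ``extract a uniform limit on $\CC\setminus D$'' does not follow; and the assertion $\mathrm{diam}\,h_n(W)\to 0$ uniformly in $n$ is precisely what must be proved. Its proof is the martingale argument combining (\ref{eqn:shapeB}) and (\ref{eqn:Bretsmall}), and it is carried out on the corrected images $\lambda_n\circ h_n(\widetilde W)$ exactly because the $\lambda_n$ are normalized, uniformly qc maps that allow components of all depths to be compared in a single chart of bounded distortion; without first constructing the $\lambda_n$ the estimate is not available. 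Finally, even granting uniform convergence of $\lambda_n\circ h_n$, identifying the limit with $\lim h_n$ and obtaining $Q_n\to f$ requires showing $\lambda_n\to\mathrm{id}$, which uses conformality a.e.\ of $\lambda_n^{-1}$ on $\mathcal{K}(n)$ together with the fact that $\bigcup_n\mathcal{K}(n)$ has full measure; this step is missing from your sketch.
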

The rest of this section is devoted to a proof of the theorem. Without loss of generality, we may assume that $\mathcal{B}\subset A$. Note that
${D}(\mathcal{B})\cup \mathcal{B}$ is compactly contained in $\C$.
Starting with $h_0=id$, we construct a sequence of qc maps $h_n$ as above, normalized so that $h_n(z)=z+o(1)$ near infinity.
Note that there is a neighborhood $V$ of $\infty$ such that $\widetilde{f}^{-1}(V)\subset V$, so that all the maps $h_n$ are conformal in $V$.

For a map $\varphi:\C\to \C$ and $z\in \C$, let
$$H(\varphi;z)=\limsup_{r\searrow 0}\frac{\sup_{|w-z|=r} |\varphi(w)-\varphi(z)|}{\inf_{|w-z|=r}|\varphi(w)-\varphi(z)|}.$$
So if $\varphi$ is differentiable at $z$ with a positive Jacobian, then
$$H(\varphi;z)=\frac{|\partial \varphi (z)|+|\overline{\partial} \varphi(z)|}{|\partial \varphi (z)|-|\overline{\partial} \varphi(z)|}.$$

Fix $K>1$ such that $\widetilde{f}$ is $K$-quasi-regular. Let us call a point $z\in \C$ {\em regular} if the following hold:
\begin{enumerate}
\item $\widetilde{f}^n(z)$ is not a ramification point of $\widetilde{f}$ for any $n\ge 0$;
\item for any non-negative $n$ and $m$, the maps $h_n$ and $\widetilde{f}$ are differentiable at $\widetilde{f}^m(z)$ with a positive Jakobian. Moreover,
    $$H(\widetilde{f}; \widetilde{f}^m(z))\le K.$$
\end{enumerate}
Note that Lebesgue almost every point in $\C$ is regular.

\begin{Lemma}\label{lem:hkh-n1st}
If $z$ is a regular point, then for any integers $k>n\ge 0$, the following hold:
\begin{equation}
H(h_k\circ h_n^{-1}, h_n(z))=H(\widetilde{f}^{k-n};\widetilde{f}^n(z))\le \prod_{j=n}^{k-1} H(\widetilde{f}; \widetilde{f}^j(z))\le K^{\#\{n\le j<k: \widetilde{f}^j(z)\in A\}}.
\end{equation}
\end{Lemma}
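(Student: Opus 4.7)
The statement is a chain of one equality and two inequalities. The last two are routine consequences of the chain rule for the linear dilatation $H$ together with the hypotheses on $\widetilde f$; the core of the lemma is the first equality.

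First I would dispose of the two bounds on the right. At a regular point the maps $\widetilde f, \widetilde f^2,\ldots,\widetilde f^{k-n}$ are differentiable with positive Jacobian at the relevant orbit points, so the usual chain rule for linear dilatation gives
\[
H(\widetilde f^{k-n};\widetilde f^n(z))\le \prod_{j=n}^{k-1} H(\widetilde f;\widetilde f^j(z)).
\]
Since $\bar\partial \widetilde f=0$ off $A$, one has $H(\widetilde f;w)=1$ whenever $w\notin A$; combined with the $K$-quasiregularity of $\widetilde f$, each factor is $1$ unless $\widetilde f^j(z)\in A$, in which case it is $\le K$. This yields the rightmost estimate.

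For the main equality I would exploit two ingredients built into Thurston's algorithm. The first is the pointwise identity $\mu_{h_n}(z)=\mu_{\widetilde f^n}(z)$ at every regular point: starting from $h_0=\mathrm{id}$ and using $h_{n+1}^*\sigma_0 = \widetilde f^*(h_n^*\sigma_0)$, induction on $n$ gives $h_n^*\sigma_0=(\widetilde f^n)^*\sigma_0$ almost everywhere. The second is the iterated form of $h_n\circ \widetilde f=Q_n\circ h_{n+1}$, namely $h_n\circ \widetilde f^{k-n}=G_{n,k}\circ h_k$ with $G_{n,k}:=Q_n\circ\cdots\circ Q_{k-1}$ a rational map, which yields on a suitable local branch
\[
h_k\circ h_n^{-1}=G_{n,k}^{-1}\circ h_n\circ \widetilde f^{k-n}\circ h_n^{-1}.
\]
Because $z$ is regular, $h_k(z)$ is a noncritical point of $G_{n,k}$, so precomposing with the holomorphic $G_{n,k}^{-1}$ leaves $H$ unchanged; the problem reduces to comparing $H(h_n\circ \widetilde f^{k-n}\circ h_n^{-1};h_n(z))$ with $H(\widetilde f^{k-n};\widetilde f^n(z))$.

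For this final comparison I would apply the chain rule for Beltrami coefficients both to the decomposition $h_k=\phi\circ h_n$, where $\phi:=h_k\circ h_n^{-1}$, and to $\widetilde f^k=\widetilde f^{k-n}\circ \widetilde f^n$, evaluated at the regular point $z$. Each rule writes $\mu_{h_k}(z)=\mu_{\widetilde f^k}(z)$ as the same M\"obius transformation $w\mapsto (\mu_{h_n}(z)+w)/(1+\overline{\mu_{h_n}(z)}w)$ applied to $\mu_\phi(h_n(z))$ times a unit-modulus phase on one side, and to $\mu_{\widetilde f^{k-n}}(\widetilde f^n(z))$ times a unit-modulus phase on the other. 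Since this M\"obius map is injective and both phases have modulus one, taking moduli gives $|\mu_\phi(h_n(z))|=|\mu_{\widetilde f^{k-n}}(\widetilde f^n(z))|$, i.e.\ the desired equality $H(h_k\circ h_n^{-1};h_n(z))=H(\widetilde f^{k-n};\widetilde f^n(z))$.

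The main technical nuisance will be the bookkeeping in the last step: selecting the correct local branch of $G_{n,k}^{-1}$, verifying that the entire orbit $\widetilde f^j(z)$, $n\le j<k$, avoids ramification and the points where $h_n,h_k$ fail to be differentiable with positive Jacobian (which is precisely the content of ``$z$ is regular''), and cleanly matching the two Beltrami chain-rule expressions so that the outer and inner $h_n$ cancel. Once these are in place, everything follows from the identity $\mu_{h_n}=\mu_{\widetilde f^n}$ at regular points, which is the distinguishing feature of Thurston's algorithm with initial data $h_0=\mathrm{id}$.
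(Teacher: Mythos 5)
Your handling of the two inequalities is exactly the paper's: submultiplicativity of the linear dilatation $H$ along the orbit, plus the fact that each factor is $1$ off $A$ and at most $K$ on $A$ by regularity. For the main equality you take a genuinely different, though equivalent, route. The paper uses the exact functional identity $\widetilde f^{\,k-n}\circ P_n\circ h_n=P_k\circ h_k=\widetilde f^{\,k}$ with the \emph{prefix} products $P_m:=Q_0\circ\cdots\circ Q_{m-1}$, so that locally $h_k\circ h_n^{-1}=P_k^{-1}\circ\widetilde f^{\,k-n}\circ P_n$ with $P_n(h_n(z))=\widetilde f^{\,n}(z)$; since $P_n$ and $P_k$ are holomorphic and locally biholomorphic at the relevant points (regularity rules out ramification along the orbit), conjugation by them preserves $H$ and the equality is immediate. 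Your Beltrami-coefficient argument --- equating $\mu_{h_k}(z)=\mu_{\widetilde f^{\,k}}(z)$ and $\mu_{h_n}(z)=\mu_{\widetilde f^{\,n}}(z)$, then invoking injectivity of the M\"obius map in the composition formula to conclude $|\mu_{h_k\circ h_n^{-1}}(h_n(z))|=|\mu_{\widetilde f^{\,k-n}}(\widetilde f^{\,n}(z))|$ --- is a correct and complete proof of the same equality; it is in effect the infinitesimal version of the paper's conjugation argument, at the cost of more bookkeeping. Two caveats. First, your intermediate reduction via $G_{n,k}=Q_n\circ\cdots\circ Q_{k-1}$ is a wrong turn: the identity $h_n\circ\widetilde f^{\,k-n}=G_{n,k}\circ h_k$ leads to $\widetilde f^{\,k-n}$ evaluated at $z$ rather than at $\widetilde f^{\,n}(z)$, and the remaining conjugating factor $h_n$ is only quasiconformal, so it does not preserve $H$; fortunately your final computation never uses this reduction, but the identity you actually want is the one with $P_n,P_k$. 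Second, the pointwise identity $\mu_{h_n}(z)=\mu_{\widetilde f^{\,n}}(z)$ at the \emph{given} regular point should be extracted from the exact identity $\widetilde f^{\,n}=P_n\circ h_n$ (using conformality of $P_n$ at $h_n(z)$), not merely from the a.e.\ statement $h_n^*\sigma_0=(\widetilde f^{\,n})^*\sigma_0$, since a.e.\ equality says nothing at a prescribed point.
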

\begin{proof}
The equality follows from the identity
$$\widetilde{f}^{k-n}\circ (Q_0\circ Q_1\circ \cdots Q_{n-1})\circ h_n =Q_0\circ \cdots \circ Q_{k-1}\circ h_k=\widetilde{f}^k.$$
The first inequality follows from the definition of $H$ and the second inequality follows from the assumption that $z$ is regular.
\end{proof}
Define
 \[\widetilde{\mathcal{K}}(n):=\{z\mid \tilde f^k(z) \notin A,\text{ for all } k \ge n\}\] and \[\widetilde{\mathcal{L}}(n)=\{z\mid \tilde f^k(z) \notin \mathcal B,\text{ for all } k \ge n\} \supset \widetilde{\mathcal{K}}(n).\]
 As $\mathcal{B}$ is open, $\tilde {\mathcal L}(n)$ is closed for each $n\ge 0$.

\begin{Lemma}For every $k>n$, $\overline{\partial } (h_k\circ h^{-1}_n)=0$ a.e. on $h_n(\widetilde{\mathcal{K}}(n))$. Moreover, there exists $K'$-q.c map $h_{k,n}$ such that $h_{k,n}=h_k\circ h^{-1}_n$ on $h_n(\widetilde{\mathcal{L}}(n))$.
\end{Lemma}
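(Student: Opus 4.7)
Setting $\mathcal{Q}_j:=Q_0\circ\cdots\circ Q_{j-1}$, the Thurston-algorithm identities $\mathcal{Q}_j\circ h_j=\widetilde{f}^j$ give the local factorization $h_k\circ h_n^{-1}=\mathcal{Q}_k^{-1}\circ\widetilde{f}^{k-n}\circ\mathcal{Q}_n$ near any regular $z$, with $\mathcal{Q}_n,\mathcal{Q}_k$ holomorphic. Hence $\overline{\partial}(h_k\circ h_n^{-1})$ vanishes at $h_n(z)$ iff $\overline{\partial}\widetilde{f}^{k-n}$ vanishes at $\widetilde{f}^n(z)$. For regular $z\in\widetilde{\mathcal{K}}(n)$, each of the points $\widetilde{f}^n(z),\ldots,\widetilde{f}^{k-1}(z)$ lies off $A$, where $\widetilde{f}$ is conformal, so $\widetilde{f}^{k-n}$ is conformal at $\widetilde{f}^n(z)$. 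Since regular points form a full-measure set, this gives $\overline{\partial}(h_k\circ h_n^{-1})=0$ a.e.\ on $h_n(\widetilde{\mathcal{K}}(n))$.

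\textbf{Part 2, setup.} By Lemma~\ref{lem:hkh-n1st} and $(\ast)$ applied at $\widetilde{f}^n(z)$: for regular $z\in\widetilde{\mathcal{L}}(n)$ the orbit $\widetilde{f}^j(z)$ ($j\ge n$) avoids $\mathcal{B}$, so it meets $A$ at most $T$ times and $H(h_k\circ h_n^{-1},h_n(z))\le K^T$. Thus $h_k\circ h_n^{-1}$ is already $K^T$-qc on $h_n(\widetilde{\mathcal{L}}(n))$, and the task reduces to modifying it on $h_n(\widetilde{\mathcal{L}}(n)^c)$. By niceness, $\widetilde{\mathcal{L}}(n)^c$ is a disjoint union of open Jordan disks $\{W\}$, with each $W$ a component of $\widetilde{f}^{-k_W}(B)$ for a unique first-entry time $k_W\ge n$ and a unique component $B$ of $\mathcal{B}$. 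Freeness of $\mathcal{B}$ keeps critical orbits out of $\mathcal{B}$, so $\widetilde{f}^{k_W}\colon W\to B$ has no critical points and is qc-univalent; combined with $\mathcal{Q}_m\circ h_m=\widetilde{f}^m$ (with $m=k_W$), this forces the restriction of the rational map $\mathcal{Q}_m|_{h_m(W)}\colon h_m(W)\to B$ to be a \emph{conformal} isomorphism.

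\textbf{Modification on $h_n(W)$.} If $k_W\ge k$, the same $(\ast)$-counting applied on $W$ gives $H(h_k\circ h_n^{-1},h_n(z))\le K^T$ for $z\in W$ and nothing needs to be changed. If $n\le m:=k_W<k$, split $h_n(W)$ using $E_W:=h_n(W\cap\widetilde{f}^{-m}(D(\mathcal{B})\cap B))$. For $z\in W$ with $\widetilde{f}^m(z)\in B\setminus D(\mathcal{B})$, the orbit avoids $\mathcal{B}$ on both $[n,m-1]$ and $[m+1,k-1]$, so $(\ast)$ yields at most $2T+1$ visits to $A$ on $[n,k-1]$ (one at $j=m$), giving $H\le K^{2T+1}$ off $E_W$. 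For $E_W$ itself, the composition of the conformal map $\mathcal{Q}_m^{-1}\colon B\to h_m(W)$ with the $K^T$-qc map $h_n\circ h_m^{-1}\colon h_m(W)\to h_n(W)$ (the latter bound coming from the same counting on $[n,m-1]$) carries $D(\mathcal{B})\cap B$ $K^T$-quasiconformally onto $E_W$; two applications of Lemma~\ref{qc1} together with~(\ref{eqn:Bretqc}) then give $\mathcal{QD}(E_W,h_n(W))\le K^{2T}M$. Lemma~\ref{qc2} now produces a $K^{4T+1}M$-qc replacement of $h_k\circ h_n^{-1}|_{h_n(W)}$ that agrees with it on $\partial h_n(W)$.

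\textbf{Gluing and main obstacle.} The replacements all match $h_k\circ h_n^{-1}$ on $\partial h_n(W)\subset h_n(\widetilde{\mathcal{L}}(n))$, so they paste to a continuous map $h_{k,n}\colon\CC\to\CC$ with a.e.\ distortion $\le K':=K^{4T+1}M$, which is therefore globally $K'$-qc. The key obstacle is to keep $K'$ independent of the component $W$ (hence of $k_W$ and of $k$); this is exactly what the conformality of $\mathcal{Q}_m|_{h_m(W)}\colon h_m(W)\to B$ — a consequence of freeness — delivers, since it imports the scale-free $\mathcal{QD}$-bound~(\ref{eqn:Bretqc}) into $h_n$-coordinates with only a $K^{2T}$ qc loss that is independent of $m$. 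The shape and area conditions~(\ref{eqn:shapeB})–(\ref{eqn:Bretsmall}) are not used at this stage but will control the later convergence of $h_{k,n}$ as $k\to\infty$.
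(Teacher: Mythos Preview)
Your proof is correct and follows essentially the same route as the paper: Part~1 via Lemma~\ref{lem:hkh-n1st}, Part~2 by treating each component $W$ of $\mathbb{C}\setminus\widetilde{\mathcal{L}}(n)$, transporting the $\mathcal{QD}$-bound~(\ref{eqn:Bretqc}) from $B$ to $h_n(W)$ through a $K^T$-qc map, and then applying Lemma~\ref{qc2} to get a $K^{4T+1}M$-qc replacement with matching boundary values. Your factorization $B\xrightarrow{\mathcal{Q}_m^{-1}}h_m(W)\xrightarrow{h_n\circ h_m^{-1}}h_n(W)$ is equivalent to the paper's $h_n(W)\xrightarrow{\mathcal{Q}_n}\widetilde{f}^n(W)\xrightarrow{\widetilde{f}^{m-n}}B$; both yield the same $K^{2T}M$ bound.

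One point deserves more care: in the gluing step you assert that a homeomorphism with a.e.\ dilatation $\le K'$ is automatically $K'$-qc. This is not true in general (one needs ACL or an equivalent regularity condition), and the boundaries $\partial h_n(W)$ could in principle carry positive measure or accumulate badly. The paper handles this by invoking \cite[Lemma~2 in Chapter~1]{DH2}, a Bers--Rickman type gluing lemma which precisely says that a homeomorphism agreeing with uniformly qc maps on a countable family of disjoint Jordan disks and with a qc map on their complement is globally qc. You should cite such a lemma rather than appeal to a.e.\ distortion alone.
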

\begin{proof}

For each regular $z\in \widetilde{\mathcal{K}}(n)$, $\#\{n\le j<k: \widetilde{f}^j(z)\in A\}=0$. So by Lemma~\ref{lem:hkh-n1st}, $H(h_k\circ h_n^{-1}; h_n(z))=1$. Since a qc map is absolutely continuous, the first statement follows.

Now let us turn to the second statement. Note that for each regular $z\in \widetilde{\mathcal{L}}(n)$,
\begin{equation}\label{eqn:hkhn-1Ln}
H(h_k\circ h_n^{-1}; h_n(z))\le K^T,
\end{equation}
since by assumption ($\ast$), $\#\{n\le j<k:\widetilde{f}^j(z)\in A\}\le T.$ Put $K'=K^{4T+1}M$.

{\bf Claim.} For each component
$W$ of $\C\setminus \widetilde{\mathcal{L}}(n)$, there is a $K'$-qc map $h^W_{k,n}$ from $h_n(W)$ onto its image such that $h_{k,n}^W|\partial h_n(W)=h_k\circ h_n^{-1}|h_n(\partial W)$.

Once this claim is proved, we can obtain a homeomorphism $h_{k,n}:\CC\to \CC$ which coincides with $h_k\circ h_n^{-1}$ outside $\widetilde{\mathcal{L}}(n)$ and coincides with $h_{k,n}^W$ on $h_n(W)$ for each component $W$ of  $\C\setminus \widetilde{\mathcal{L}}(n)$. By \cite[Lemma 2 in Chapter 1]{DH2}, the map $h_{k,n}$ is $K'$-qc.

To prove the claim, let $w$ be the smallest integers such that $w\ge n$ and $\widetilde{f}^w(W)\cap\mathcal{B}\not=\emptyset$. Since $\mathcal{B}$ is nice and disjoint from the postcritical set of $\widetilde{f}$,
$\widetilde{f}^w$ maps $W$ homeomorphically onto a component $B$ of $\mathcal{B}$, and $\widetilde{f}^k(W)\cap\mathcal{B}=\emptyset$ for each $n\le k<w$.
By the assumption ($\ast$), for any $z\in W$,
$\#\{n\le j<w: \widetilde{f}^j(z)\in A\}\le T.$
By Lemma~\ref{lem:hkh-n1st}, if $z$ is regular, then for any $n<k\le w$,
$$H(h_k\circ h_n^{-1}; h_n(z))=H(\widetilde{f}^{k-n}; \widetilde{f}^n(z))\le K^T.$$
In particular, if $n<k\le w$, then $h_k\circ h_n^{-1}$ is $K^T$-qc on $h_n(W)$. Assume now that $k>w$ and let $W'=(\widetilde{f}^{w}|W)^{-1}(D(\mathcal{B})\cap B)$. Since $\widetilde{f}^{n}\circ h_n^{-1}$ is conformal on $h_n(W)$ and $\widetilde{f}^{w-n}$ is a $K^T$-qc map in $\widetilde{f}^n(W)$, by Lemma~\ref{qc1},
we have
$$\mathcal{QD}(h_n(W'), h_n(W))\le K^{2T} \mathcal{QD}(D(\mathcal{B})\cap B, B)\le K^{2T}M.$$
For each $z\in W\setminus W'$, $\widetilde{f}^w(z)\not\in D(\mathcal{B})$, so $\widetilde{f}^j(z)\not\in \mathcal{B}$ for all $j>w$. By assumption ($\ast$), it follows that
$$\#\{n\le j<k: \widetilde{f}^j(z)\in A\}\le 2T+1.$$
Thus for a regular $z\in W\setminus W'$, $H(h_k\circ h_n^{-1};h_n(z))\le K^{2T+1}.$
It follows by Lemma~\ref{qc2} that there is a $K'$-qc map defined on $h_n(W)$ which has the same boundary value as $h_k\circ h_n^{-1}$.
\end{proof}

By compactness of normalized $K$-qc maps and the diagonal argument, there exists a sequence $\{k_j\}_{j=1}^\infty$ of positive integers such that
for each $n$, $h_{k_j,n}$ converges locally uniformly in $\C$ to a $K'$-qc map
$\lambda_n:\C\to\C$. Since $\overline{\partial } h_{k_j,n}$ and $\overline{\partial }\lambda_n$ have bounded norm in $L^2(\C)$ and $\overline{\partial } h_{k_j,n}$ converges to $\overline{\partial }\lambda_n$ in the sense of distribution, it follows that
\begin{equation}\label{eqn:dbarchi}
\overline{\partial }\lambda_n=0 \text{ a.e. on } h_n(\widetilde{\mathcal{K}}(n)).
\end{equation}

\begin{Lemma}\label{lem:chinhn}
For each $k>n\ge 0$,
$$\lambda_n \circ h_n=\lambda_k \circ h_k\text{ on }\widetilde{\mathcal{L}}(n).$$
\end{Lemma}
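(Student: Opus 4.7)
The plan is to evaluate both sides of the desired identity at a point $z\in \widetilde{\mathcal{L}}(n)$ and show that each side equals the limit $\lim_j h_{k_j}(z)$ along the diagonal subsequence used to extract $\lambda_n$ and $\lambda_k$.

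First I would record the elementary but essential nesting property $\widetilde{\mathcal{L}}(n)\subset \widetilde{\mathcal{L}}(k)$ whenever $n\le k$: the requirement that $\widetilde f^j(z)\notin \mathcal B$ for every $j\ge n$ is strictly stronger as $n$ decreases. Thus any $z\in \widetilde{\mathcal{L}}(n)$ automatically belongs to $\widetilde{\mathcal{L}}(k)$, so $h_k(z)\in h_k(\widetilde{\mathcal{L}}(k))$ and $h_n(z)\in h_n(\widetilde{\mathcal{L}}(n))$, which are the sets where the preceding lemma gives us identifications of $h_{k_j,n}$ and $h_{k_j,k}$ with $h_{k_j}\circ h_n^{-1}$ and $h_{k_j}\circ h_k^{-1}$, respectively.

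Fixing such a $z$ and any index $k_j$ in the extracted subsequence with $k_j>k$, the previous lemma yields
\[
h_{k_j,n}(h_n(z))=h_{k_j}\circ h_n^{-1}(h_n(z))=h_{k_j}(z),
\qquad
h_{k_j,k}(h_k(z))=h_{k_j}\circ h_k^{-1}(h_k(z))=h_{k_j}(z).
\]
Now I would invoke the locally uniform convergence $h_{k_j,n}\to \lambda_n$ and $h_{k_j,k}\to \lambda_k$ on $\C$, which was used to produce the limiting qc maps $\lambda_n$ and $\lambda_k$. Passing to the limit in $j$ on each identity gives
\[
\lambda_n(h_n(z))=\lim_{j\to\infty} h_{k_j}(z)=\lambda_k(h_k(z)),
\]
which is precisely the claimed equality on $\widetilde{\mathcal{L}}(n)$.

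There is no genuine obstacle here: the proof is essentially an observation that the two different ``telescoping'' normalizations of the Thurston iterates record the same value $h_{k_j}(z)$ at points whose forward orbit avoids $\mathcal B$ from time $n$ onward, and compatibility of the limits along the common subsequence $\{k_j\}$ then forces $\lambda_n\circ h_n$ and $\lambda_k\circ h_k$ to agree. The one point I would verify in writing, for cleanliness, is that the same diagonal subsequence $\{k_j\}$ can be used simultaneously for all $n$, which is exactly what was arranged by the diagonal extraction in the construction of $\lambda_n$; this guarantees that the single limit $\lim_j h_{k_j}(z)$ appearing in both displays is well defined and independent of $n$ or $k$.
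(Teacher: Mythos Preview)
Your proof is correct and follows essentially the same approach as the paper: both use the nesting $\widetilde{\mathcal{L}}(n)\subset \widetilde{\mathcal{L}}(k)$ together with the previous lemma to write $h_{k_j,n}\circ h_n=h_{k_j}=h_{k_j,k}\circ h_k$ on $\widetilde{\mathcal{L}}(n)$, and then pass to the limit along the common subsequence $\{k_j\}$. Your additional remark about the diagonal extraction ensuring a single subsequence works for all $n$ is a useful clarification, but the core argument is identical.
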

\begin{proof} For each $j$ large enough so that $k_j>k>n$, since $\widetilde{\mathcal{L}}(n)\subset \widetilde{\mathcal{L}}(k)$,
\begin{eqnarray*}
h_{k_j,n}\circ h_n=h_{k_j} = h_{k_j}\circ h^{-1}_k\circ h_k
= h_{k_j,k}\circ h_k
\end{eqnarray*}
is valid on $\widetilde{\mathcal{L}}(n)$. Letting $j$ go to infinity, we obtain $\lambda_n\circ h_n=\lambda_k\circ h_k$ on $\widetilde{\mathcal{L}}(n)$.
\end{proof}

\subsection{Limit geometry}
Let $\mathcal{L}(n)=\lambda_n\circ h_n(\widetilde{\mathcal{L}}(n))$ and $\mathcal{K}(n)=\lambda_n\circ h_n(\widetilde{\mathcal{K}}(n))$. Since we assume $A\supset \mathcal{B}$, $\mathcal{K}(n)\subset \mathcal{L}(n)$. By Lemma~\ref{lem:chinhn}, for $k\ge n$, we have
\[\mathcal{L}(n)=\lambda_k\circ h_k(\widetilde{\mathcal{L}}(n))\subset \lambda_k\circ h_k(\widetilde{\mathcal{L}}(k))=\mathcal{L}(k)\]
and \[\mathcal{K}(n)=\lambda_k\circ h_k(\widetilde{\mathcal{K}}(n))\subset \lambda_k\circ h_k(\widetilde{\mathcal{K}}(k))=\mathcal{K}(k).\]
By (\ref{eqn:dbarchi}), $\lambda^{-1}_n$ is conformal a.e.on $\mathcal{K}(n)$.

\begin{equation*}
\xymatrix@C=4em@R=3em{
\cdots \ar[r]  &\mathbb C\ar[r] &\mathbb C \ar[r] &\cdots \ar[r] &\mathbb C\ar[r] &\mathbb C\\
\cdots \ar[r]  &\mathbb C  \ar[u]^{\lambda_{n+1}}\ar[r]^{Q_n} &\mathbb C\ar[u]^{\lambda_{n}}\ar[r] &\cdots \ar[r] &\mathbb C \ar[u]^{\lambda_{1}}\ar[r]^{Q_0}&\mathbb C\ar[u]^{\lambda_{0}}\\
\cdots \ar[r] &\mathbb C \ar[u]^{h_{n+1}}\ar[r]^{\tilde f} &\mathbb C\ar[u]^{h_n}\ar[r] &\cdots \ar[r] &\mathbb C
\ar[u]^{h_1}\ar[r]^{\tilde f} &\mathbb C \ar[u]^{\mathrm{id}}}
\end{equation*}

\begin{Lemma}\label{lem:shapeW} There exists $M'>0$ such that for any $n\ge 0$ and any component $W$ of $\CC\setminus \mathcal{L}(n)$,
$$\text{diam} (W)^2\le M'\text{area}(W).$$
\end{Lemma}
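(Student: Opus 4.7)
The plan is to identify each component $W$ of $\CC \setminus \mathcal{L}(n)$ as the image of some component $B$ of $\mathcal{B}$ under a composition that is holomorphically univalent followed by uniformly quasiconformal, and then to transfer the shape bound (\ref{eqn:shapeB}) through this composition. First I would let $\widetilde{W}$ be the component of $\CC \setminus \widetilde{\mathcal L}(n)$ with $\lambda_n \circ h_n(\widetilde{W}) = W$. Since $\mathcal B$ is nice and disjoint from $P(\widetilde f)$, there is a minimal integer $w \ge n$ such that $\widetilde f^w$ maps $\widetilde W$ homeomorphically onto some component $B$ of $\mathcal B$. Because $\partial \widetilde W \subset \widetilde{\mathcal L}(n)$, Lemma~\ref{lem:chinhn} forces $\lambda_n \circ h_n = \lambda_w \circ h_w$ on $\partial \widetilde W$, hence $W = \lambda_w(V)$ where $V := h_w(\widetilde W)$. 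Setting $Q^w = Q_0 \circ \cdots \circ Q_{w-1}$, the identity $Q^w \circ h_w = \widetilde f^w$ and the fact that $\widetilde f^w|_{\widetilde W}$ is unbranched give that $Q^w|_V : V \to B$ is a conformal isomorphism. Moreover, the critical values of $Q^w$ in $\C$ are contained in $\widetilde f^w(\mathrm{Crit}(\widetilde f^w)) \subset P(\widetilde f)$.

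Next I would construct a uniform Koebe collar around each $B$. Freeness of $\mathcal B$ gives $\mathrm{dist}(\overline{\mathcal B}, P(\widetilde f)) > 0$, and the shape estimate (\ref{eqn:shapeB}) together with the boundedness of $\mathrm{diam}(\mathcal B)$ controls the geometry of $B$; combining these, one constructs a topological disk $\widehat B \supset \overline B$ disjoint from $P(\widetilde f)$ with $\mathrm{mod}(\widehat B \setminus \overline B) \ge \epsilon_0$ for some $\epsilon_0 > 0$ independent of $B$. Since $Q^w$ has no critical values in $\widehat B$, the conformal map $Q^w|_V$ extends univalently to $\widehat V \to \widehat B$. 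Koebe's distortion theorem applied to the inverse branch $(Q^w)^{-1} : \widehat B \to \widehat V$ shows that this map is bi-Lipschitz on $B$ up to a constant depending only on $\epsilon_0$, so $V$ has shape comparable to $B$: $\mathrm{diam}(V)^2 \le M_1 \mathrm{area}(V)$ with $M_1 = M_1(M, \epsilon_0)$.

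Finally, $W = \lambda_w(V)$. Since the map $F := \lambda_w \circ (Q^w)^{-1}|_{\widehat B}$ is $K'$-quasiconformal on $\widehat B$ (being the composition of a conformal map and the $K'$-quasiconformal map $\lambda_w$), and $B \Subset \widehat B$ with $\mathrm{mod}(\widehat B \setminus \overline B) \ge \epsilon_0$, standard quasisymmetric distortion estimates for qc maps on domains with definite modular collar transfer the shape bound on $B$ to a shape bound on $F(B) = W$, yielding the desired $M' = M'(M, \epsilon_0, K')$. The main technical obstacle is establishing the uniform lower bound on the modulus of the Koebe collar: this requires the interplay of the shape regularity of $B$ coming from $M$-niceness, the positive separation of $\overline{\mathcal B}$ from $P(\widetilde f)$ coming from freeness, and the key identification of the critical values of $Q^w$ as a subset of $P(\widetilde f)$ via the dynamical relation $Q^w \circ h_w = \widetilde f^w$.
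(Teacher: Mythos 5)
Your proposal is correct and follows essentially the same route as the paper: identify $\widetilde W$ as a component of $\CC\setminus\widetilde{\mathcal L}(w)$ mapped homeomorphically by $\widetilde f^w$ onto a component $B$ of $\mathcal B$, note that $Q_0\circ\cdots\circ Q_{w-1}$ maps $h_w(\widetilde W)$ conformally onto $B$ with bounded (Koebe) distortion thanks to a definite collar around $B$ free of critical values, and then push the shape bound through the uniformly quasiconformal map $\lambda_w$. The only difference is that you spell out the Koebe-collar construction that the paper compresses into one sentence.
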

\begin{proof} Let $\widetilde{W}=(\lambda_n\circ h_n)^{-1}(W)$. Let $w$ be the minimal integer such that $w\ge n$ and $\widetilde{f}^w(\widetilde{W})\cap \mathcal{B}\not=\emptyset$. Then $\widetilde{W}$ is a component of $\CC\setminus \widetilde{\mathcal{L}}(w)$ and $\widetilde{f}^w$ maps $\widetilde{W}$ homeomorphically onto a component $B$ of $\mathcal{B}$. So $\varphi:=Q_0\circ Q_1\circ \cdots Q_{w-1}$ maps $h_w(\widetilde{W})$ conformally onto $B$. Moreover, since $B$ has a definite neighborhood disjoint from $P(\widetilde{f})$, the conformal map
$\varphi$ has bounded distortion. Thus $\text{diam}(h_w(\widetilde{W}))^2/\text{area}(h_w(\widetilde{W}))$ is bounded from above. Since $\lambda_w$ are normalized $K'$-qc maps and $\lambda_w (h_w(\widetilde{W}))=W$, the statement follows.
\end{proof}

\begin{Lemma}\label{lem:Lnsmall}
\begin{enumerate}
\item The Lebesgue measure of the set $\C\setminus \mathcal{L}(n)$ tends to zero as $n\to\infty$.
\item $$\lim_{n\to\infty} \sup\{\text{diam}(W): W\text{ is a component of }\C\setminus \mathcal{L}(n)\}=0.$$
\end{enumerate}
\end{Lemma}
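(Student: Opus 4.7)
The plan is to prove part (1) via a self-improving measure estimate and then deduce part (2) from the shape bound in Lemma~\ref{lem:shapeW}. Set $\mathcal{L}(\infty) := \bigcup_n \mathcal{L}(n)$; since $\mathcal{L}(n)\uparrow$, monotone convergence gives $\mathrm{area}(\C\setminus\mathcal{L}(n))\downarrow L:=\mathrm{area}(\C\setminus\mathcal{L}(\infty))$. I aim to show $L=0$ via the following uniform estimate: there exists a universal constant $c>0$ such that for every $n$ and every component $W$ of $\C\setminus\mathcal{L}(n)$,
\[
\mathrm{area}(W\cap\mathcal{L}(\infty))/\mathrm{area}(W)\ge c.
\]
Summing over components yields $\mathrm{area}(\C\setminus\mathcal{L}(n))-L = \mathrm{area}((\C\setminus\mathcal{L}(n))\cap\mathcal{L}(\infty))\ge c\cdot\mathrm{area}(\C\setminus\mathcal{L}(n))$, i.e.\ $(1-c)\mathrm{area}(\C\setminus\mathcal{L}(n))\ge L$; letting $n\to\infty$ forces $L=0$.

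To establish the key estimate, I adopt the notation of the proof of Lemma~\ref{lem:shapeW}: let $\widetilde W=\mu_n^{-1}(W)$ (with $\mu_n:=\lambda_n\circ h_n$), let $w\ge n$ be its first entry time, and let $B=\tilde f^w(\widetilde W)$ be the corresponding component of $\mathcal{B}$. Consider the map
\[
\Psi := \mu_{w+1}\circ (\tilde f^w|\widetilde W)^{-1}:B\to W,\qquad \mu_{w+1}:=\lambda_{w+1}\circ h_{w+1}.
\]
By Lemma~\ref{lem:chinhn} $\mu_n$ and $\mu_{w+1}$ coincide on $\widetilde{\mathcal{L}}(n)\supset\partial\widetilde W$, so $\mu_{w+1}(\widetilde W)=W$ and $\Psi$ is a well-defined homeomorphism. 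Using $\tilde f^{w+1}=(Q_0\circ\cdots\circ Q_w)\circ h_{w+1}$, rewrite $\Psi=\lambda_{w+1}\circ(Q_0\circ\cdots\circ Q_w)^{-1}\circ\tilde f$ on the branch of the rational inverse through $h_{w+1}(\widetilde W)$; since the critical values of $Q_0\circ\cdots\circ Q_w$ all lie in $P(\tilde f)$ which is disjoint from $\mathcal{B}$, this branch is conformal with Koebe-bounded distortion on a definite neighborhood of $\tilde f(B)$. Combined with $\tilde f$ being $K$-qc and $\lambda_{w+1}$ being $K'$-qc, $\Psi$ is $KK'$-qc with a uniform dilatation bound. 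Now for $y\in B\setminus D(\mathcal{B})$ and $z:=(\tilde f^w|\widetilde W)^{-1}(y)$, the definitions give $\tilde f^k(z)=\tilde f^{k-w}(y)\notin\mathcal{B}$ for all $k\ge w+1$, so $z\in\widetilde{\mathcal{L}}(w+1)$ and hence $\Psi(y)\in\mu_{w+1}(\widetilde{\mathcal{L}}(w+1))=\mathcal{L}(w+1)\subset\mathcal{L}(\infty)$. Thus $W_*:=\Psi(B\setminus D(\mathcal{B}))\subset W\cap\mathcal{L}(\infty)$. By property (\ref{eqn:Bretsmall}), $\mathrm{area}(B\setminus D(\mathcal{B}))/\mathrm{area}(B)\ge M^{-1}$; the area-distortion theorem for $KK'$-qc maps between Jordan domains then gives $\mathrm{area}(W_*)/\mathrm{area}(W)\ge c=c(M,K,K')>0$, which is the key estimate.

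Part (2) then follows at once: if some component $W_k$ of $\C\setminus\mathcal{L}(n_k)$ had $\mathrm{diam}(W_k)\ge\epsilon$ for a subsequence $n_k\to\infty$, then Lemma~\ref{lem:shapeW} would give $\mathrm{area}(W_k)\ge\epsilon^2/M'$, contradicting part (1). The main obstacle I expect to work through is verifying rigorously that the branch of $(Q_0\circ\cdots\circ Q_w)^{-1}$ through $h_{w+1}(\widetilde W)$ extends as a single-valued conformal map over $\tilde f(B)$; this is clean when $B\cap\mathrm{Crit}(\tilde f)=\emptyset$, but in general one must exploit the fact that every critical value of $\tilde f^{w+1}|\widetilde W$ lies in $P(\tilde f)\setminus\mathcal{B}$ to show that the branching produced by critical points of $\tilde f$ inside $B$ is resolved on the particular lift through $h_{w+1}(\widetilde W)$.
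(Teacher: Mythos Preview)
Your overall plan---establish a uniform lower bound on $\mathrm{area}(W\cap\mathcal{L}(\infty))/\mathrm{area}(W)$ via the $M$-niceness condition~(\ref{eqn:Bretsmall}) and qc area distortion, then invoke Lemma~\ref{lem:shapeW} for part (2)---is exactly the paper's strategy. The paper phrases the key estimate dually (the ``children'' of $W$ in a hierarchy $\mathscr{W}^k$ occupy at most a fixed fraction $\lambda<1$ of its area) and transfers the area ratio via $\lambda_n\circ\varphi^{-1}$ with $\varphi=Q_0\circ\cdots\circ Q_{n-1}$, but this is essentially the complementary formulation of what you wrote.

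Your self-identified obstacle, however, is genuine and your proposed resolution does not work. Freeness of $\mathcal{B}$ only says $\overline{\mathcal{B}}\cap P(\widetilde f)=\emptyset$; nothing in the hypotheses of Theorem~\ref{thm:Thurston} prevents $B$ from containing a critical point $c$ of $\widetilde f$. In that case $\widetilde f(c)\in P(\widetilde f)\cap\widetilde f(B)$, and more to the point $Q_0\circ\cdots\circ Q_w$ restricted to $h_{w+1}(\widetilde W)$ has an honest critical point at $h_{w+1}\bigl((\widetilde f^{\,w}|\widetilde W)^{-1}(c)\bigr)$, so it is a branched cover onto $\widetilde f(B)$ with no single-valued inverse there. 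The branching is not ``resolved on the particular lift'': the lift $h_{w+1}(\widetilde W)$ is precisely where the branching lives.

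The fix is to factor at level $w$ rather than $w+1$, which is what the paper does. Using $\widetilde f^{\,w}=(Q_0\circ\cdots\circ Q_{w-1})\circ h_w$ one gets
\[
\Psi=\lambda_{w+1}\circ(h_{w+1}\circ h_w^{-1})\circ(Q_0\circ\cdots\circ Q_{w-1})^{-1}\quad\text{on }B.
\]
The right-hand factor is the conformal branch through $h_w(\widetilde W)$, well-defined with Koebe-bounded distortion because $B$ (not $\widetilde f(B)$) has a definite neighborhood disjoint from $P(\widetilde f)$; the middle factor is $K$-qc on $h_w(\widetilde W)$ by Lemma~\ref{lem:hkh-n1st}; and $\lambda_{w+1}$ is $K'$-qc. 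This yields the uniform $KK'$-qc bound on $\Psi$ you need, and the remainder of your argument goes through unchanged.
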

\begin{proof}
The second statement follows form the first since all components of $\CC\setminus \mathcal{L}(n)$ have uniformly bounded shape by Lemma~\ref{lem:shapeW}.

To prove the first statement, we shall use a martingale type argument. Let $\mathscr{W}$ denote the collection of components of $\CC\setminus \mathcal{L}(n)$, where $n$ runs over all non-negative integers. Let $\mathscr{W}^0$ denote the maximal elements in $\mathscr{W}$, i.e., those that are not contained in any other. For each $k\ge 1$, define inductively $\mathscr{W}^k$ to be the maximal elements in $\mathscr{W}\setminus \bigcup_{0\le j<k} \mathscr{W}^j$. So $\mathscr{W}$ is the disjoint union of $\mathscr{W}^k$, $k=0,1,\ldots$. Note that
$$\CC\setminus \bigcup_n \mathcal{L}(n)\subset \bigcap_{k=0}^\infty \bigcup_{W\in \mathscr{W}^k} W.$$
It suffices to show that there is a constant $\lambda\in (0,1)$ such that for $k\ge 0$ and each $W\in \mathscr{W}^k$,
\begin{equation}\label{eqn:Wfollower}
\frac{\text{area}(W\cap (\bigcup_{W'\in \mathscr{W}^{k+1}}W'))}{\text{area} (W)}\le \lambda.
\end{equation}

To this end, fix such a $W$. Note that there is $n$ such that $\widetilde{W}:=(\lambda_n\circ h_n)^{-1}(W)$ satisfies the following: $\widetilde{f}^n$ maps $\widetilde{W}$ homeomorphically onto a component $B$ of $\mathcal{B}$.
So $\varphi:=Q_0\circ Q_1\circ \cdots\circ Q_{n-1}$ maps $\lambda_n^{-1}(W)$ conformally onto $B$ and maps $\lambda_n^{-1}(W\setminus (\bigcup_{W'\in \mathscr{W}^{k+1}}W'))$ onto  $B\setminus D(\mathcal{B})$.
Since $B$ has a definite neighborhood disjoint from $P(\widetilde{f})$, $\varphi|\lambda_n^{-1}(W)$ has bounded distortion. Thus
$$\frac{\text{area}(\lambda_n^{-1}(W\setminus (\bigcup_{W'\in \mathscr{W}^{k+1}}W'))}{\text{area} (\lambda_n^{-1}(W))} \ge C\frac{\text{area}(B\setminus D(\mathcal{B}))}{\text{area}(B)}>\frac{C}{M},$$
where $C>0$ is independent of $W$.
Since $\lambda_n$ are normalized $K'$-qc maps, (\ref{eqn:Wfollower}) follows.
\end{proof}

\begin{Lemma} \label{lem:KnLn}
$\bigcup_{n=0}^\infty \mathcal{K}(n)=\bigcup_{n=0}^\infty\mathcal{L}(n)$.
\end{Lemma}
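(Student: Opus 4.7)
My plan is to prove the nontrivial inclusion $\bigcup_{n} \mathcal{L}(n)\subset \bigcup_{n}\mathcal{K}(n)$; the reverse inclusion is already recorded in the text because the assumption $\mathcal{B}\subset A$ gives $\widetilde{\mathcal{K}}(n)\subset\widetilde{\mathcal{L}}(n)$, hence $\mathcal{K}(n)\subset\mathcal{L}(n)$ for every $n$.

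The key mechanism is that hypothesis $(\ast)$ forces an orbit which permanently avoids $\mathcal{B}$ to meet $A$ only finitely often. More precisely, suppose $x\in\mathcal{L}(n)$, so $x=\lambda_n\circ h_n(z)$ for some $z\in\widetilde{\mathcal{L}}(n)$, i.e.\ $\widetilde{f}^k(z)\notin\mathcal{B}$ for all $k\ge n$. Set $w=\widetilde{f}^n(z)$, so that $\widetilde{f}^j(w)\notin\mathcal{B}$ for every $j\ge 0$. Applying $(\ast)$ to $w$ with any $N\ge 1$ in place of $n$ and then letting $N\to\infty$ yields
\[
\#\{j\ge n:\widetilde{f}^j(z)\in A\}=\#\{j\ge 0:\widetilde{f}^j(w)\in A\}\le T<\infty.
\]
Consequently there exists $m\ge n$ with $\widetilde{f}^j(z)\notin A$ for all $j\ge m$; that is, $z\in\widetilde{\mathcal{K}}(m)$.

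It then remains only to verify that the image of $z$ under $\lambda_m\circ h_m$ agrees with $x$. Since $m\ge n$ and $z\in\widetilde{\mathcal{L}}(n)$, Lemma~\ref{lem:chinhn} (the identity $\lambda_n\circ h_n=\lambda_k\circ h_k$ on $\widetilde{\mathcal{L}}(n)$ for $k\ge n$) gives
\[
x=\lambda_n\circ h_n(z)=\lambda_m\circ h_m(z)\in\lambda_m\circ h_m(\widetilde{\mathcal{K}}(m))=\mathcal{K}(m),
\]
completing the argument.

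I do not anticipate a real obstacle: everything reduces to the bookkeeping between the upstairs sets $\widetilde{\mathcal{K}}(m),\widetilde{\mathcal{L}}(n)$ and their images under the asymptotic normalizations, and the only genuinely nontrivial input, namely that orbits staying out of $\mathcal{B}$ visit $A$ at most $T$ times, is packaged in the standing hypothesis $(\ast)$. The single subtlety worth flagging is that $(\ast)$ as stated counts visits only on initial segments $0\le k<n$, so one needs the trick of shifting the orbit by $n$ and then passing to the limit in $N$ to extract a bound on the full tail $\{j\ge n\}$; after that, the conclusion is immediate from Lemma~\ref{lem:chinhn}.
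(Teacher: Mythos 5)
Your proof is correct and uses the same two essential inputs as the paper: hypothesis $(\ast)$ applied to the shifted orbit to get $\bigcup_n\widetilde{\mathcal{K}}(n)=\bigcup_n\widetilde{\mathcal{L}}(n)$ upstairs, and Lemma~\ref{lem:chinhn} to transfer this to the sets $\mathcal{K}(n),\mathcal{L}(n)$. The only difference is stylistic: the paper runs the transfer step as a proof by contradiction (showing the preimages $\tilde z_n$ of a putative bad point stabilize), whereas your direct version via $x=\lambda_n\circ h_n(z)=\lambda_m\circ h_m(z)\in\mathcal{K}(m)$ is, if anything, slightly cleaner.
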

\begin{proof} By the assumption ($\ast$), $\bigcup_n \widetilde{\mathcal{K}}(n)=\bigcup_n \widetilde{\mathcal{L}}(n)$. Arguing by contradiction, assume that there exists $z\in (\bigcup_n \mathcal{L}(n))\setminus (\bigcup_n \mathcal{K}(n))$. Then there exists $n_0$ such that for all $n\ge n_0,$ $z\in \mathcal{L}(n)\setminus \mathcal{K}(n)$. By definition, there exists $\tilde{z}_n\in \widetilde{\mathcal L}(n)\setminus \widetilde{\mathcal{K}}(n)$ such that $\lambda_n\circ h_n(\tilde{z}_n)=z$. Then $$\lambda_{n+1}\circ h_{n+1}(\tilde{z}_n)=\lambda_n\circ h_n(\tilde{z}_n)=z=\lambda_{n+1}\circ h_{n+1}(\tilde{z}_{n+1}),$$ so $\tilde{z}_{n+1}=\tilde{z}_n$. Therefore $\tilde{z}=\tilde{z}_{n_0}$ satisfies
$\tilde{z}\in (\bigcup_{n=n_0}^\infty \widetilde{\mathcal{L}}(n))\setminus (\bigcup_{n=0}^\infty\widetilde{\mathcal{K}}(n))$. This is absurd.
\end{proof}
\begin{proof}[Proof of Theorem~\ref{thm:Thurston}]
By Lemma~\ref{lem:chinhn}, $\lambda_n\circ h_n=\lambda_k\circ h_k$ on $\widetilde{L}(n)$ for all $k>n$, we obtain $\lambda_n\circ h_n(\widetilde{W})=\lambda_k\circ h_k(\widetilde{W})$ for all component $\widetilde{W}$ of $\mathbb C\backslash \widetilde{L}(n)$. By Lemma~\ref{lem:Lnsmall},
\[\sup\limits_{z\in \mathbb C}|\lambda_k\circ h_k(z)-\lambda_n\circ h_n(z)| \le 2\sup\limits_{\widetilde W}\mathrm{diam}~\lambda_n\circ h_n(\widetilde W) \to 0\]
as $n$ tends to $\infty$, so $\lambda_n\circ h_n$ converges uniformlly to a continuous function $h:\mathbb C\to \mathbb C$. Similarly,
\begin{eqnarray*}
& &\sup\limits_{z\in \mathbb C}|\lambda_{k}\circ Q_{k}\circ \lambda^{-1}_{k+1}(z)-\lambda_{n}\circ Q_{n}\circ \lambda^{-1}_{n+1}(z)|\\
 &=& \sup\limits_{z\in \mathbb C}|\lambda_{k}\circ h_k\circ\tilde f\circ h^{-1}_{k+1}\circ \lambda^{-1}_{k+1}(z)-\lambda_{n}\circ h_n\circ \tilde f \circ h^{-1}_{n+1}\circ \lambda^{-1}_{n+1}(z)| \\
 &=& 2 \sup\limits_{W}\mathrm{diam}W \to 0
\end{eqnarray*}
as $n$ tends to $0$, so $\lambda_{n}\circ Q_{n}\circ \lambda^{-1}_{n+1}$ converges uniformlly to a proper map $f:\mathbb C\to \mathbb C$.\par
Recall that $\lambda^{-1}_n$ is a normalized $\tilde K$-qc map and conformal Lebsgue a.e. on $\mathcal{K}(n)$. By Lemma~\ref{lem:Lnsmall} (1) and Lemma~\ref{lem:KnLn}, $\bigcup\limits_{n}\mathcal{K}(n)=\bigcup\limits_n \mathcal{L}(n)$ has full Lebsgue measure. By \cite[Lemma B.1]{R-L}, $\lambda^{-1}_n$ converges uniformly to identity, hence so does $\lambda_n$.

We conclude that $h_n$ converges uniformly to a continuous map $h$ and $Q_n$ converges uniformly to a rational map $f$ of degree $d$.
Thus $h\circ \widetilde{f}=f\circ h$.
On $\widetilde{\mathcal{L}}(0)$, $h(z)=\lim_{n\to\infty}\lambda_n\circ h_n(z)=\lambda_0\circ h_0(z)=\lambda_0(z)$.
\end{proof}

\section{Qc surgery and proof of the Main Theorem}\label{sec:surgery}
As before, we fix $f_0\in \Poly(d)$ which is postcritically finite, hyperbolic and primitive, let $T=(|T|, \sigma, \delta)$ denote the reduced mapping scheme of $f_0$ and let $r:|T|\to \mathbb{N}$ denote the return time function. We also fix a collection $\{\theta_\v\}_{\v\in |T|}$ of external angles such that $d^{r(\v)}\theta_\v=\theta_{\sigma(\v)}\mod 1$ and such that $\mathcal{R}_{f_0}(\theta_\v)$ lands on the boundary of $\v$, for each $\v\in |T|$, according to Lemma~\ref{lem:externalangle}.

Choosing two large positive integers $N_0<N_1$ such that the following hold for each $\v\in |T|$:
\begin{itemize}
\item $f_0^j(Y_{N_0}(\v))$, $\v\in |T|$, $0\le j<r(\v)$, are pairwise disjoint;
\item $f_0^{r(\v)}: Y_{N_0}(\v)\to Y_{N_0-r(\v)}(\sigma(\v))$ has degree $\delta(\v)$;
\item putting $N'_0=N_0+N+\max_{\v} r(\v)$, $Y_{N_1}(\v)\Subset Y_{N'_0}(\v)$.
\end{itemize}
where $N$ is as in Theorem~\ref{thm:Kqcpuzzle}.

Applying the `thickening' procedure (\cite{Mil4} and \cite[Lemma 5.13]{IK}),
we obtain quasi-disks $U'_{\v}\Subset U''_{\v}$, with $U'_{\v}\supset Y_{N_1+r(\v)}(\v)$ and $Y_{N_0+N}(\v)\Supset U''_{\v}\supset Y_{N_1}(\v)$ and such that $f_0^{r(\v)}: U'_{\v}\to U''_{\v}$ again has degree $\delta(\v)$. Then the map
$$F_0:\bigcup_{\v\in |T|} \{\v\}\times U'_\v\to \bigcup_{\v} \{\v\}\times U''_\v$$
defined by $F_0|U'_\v=f_0^{r(\v)} |U'_\v$, is a GPL map over $T$, with filled Julia set equal to $\bigcup_{\v}\{\v\}\times\overline{\v}$.  We may choose these domains $U'_\v$ and $U''_\v$ so that
$\mathcal{R}_{f_0}(\theta_\v)$ intersects $\partial U'_\v$ (resp. $\partial U''_\v$) at a single point.
Let $U_\v=\{z\in U'_\v: F_0(\v, z)\in \{\sigma(\v)\}\times U'_{\sigma(\v)}\}$.

\begin{Theorem}\label{thm:tildef} Given $g\in\mathcal{C}(T)$ there exists a quasi-regular map $\widetilde{f}$ of degree $d$ with the following properties:
\begin{enumerate}
\item $f_0(z)=\widetilde{f}(z)$ for each $z\in \C\setminus (\bigcup_{\v} U'_\v)$;
\item There exist quasi-disks $U_{\v, g} \Subset U'_\v$ such that $\widetilde{f}$ is holomorphic in $U_{\v,g}$ and
the map $$\widetilde{F}:\bigcup_{\v} \{\v\}\times U_{\v, g} \to \bigcup_{\v} \{\v\}\times U'_\v,\,\,
(\v, z)\mapsto (\sigma(\v), \widetilde{f}^{r(\v)}(z)),$$ is a GPL map over $T$ which is conformally conjugate to $g$ near their filled Julia set.
More precisely, there are  quasi-disks $V_{\v, g}\Subset V'_{\v, g}$ such that
$g:\bigcup_{\v} \{\v\}\times V_{\v, g} \to \bigcup_{\v} \{\v\}\times V'_{\v,g}$ is a GPL map over $T$, and
for each $\v\in |T|$ there is a conformal map $\varphi_\v: U'_\v\to V'_{\v,g}$ such that $\varphi_{\v} (U_{\v, g})=V_{\v,g}$ and
$$\varphi_{\sigma(\v)}\circ \widetilde{f}^{r(\v)}=g\circ \varphi_\v \text{ holds on } U_{\v,g}.$$
\item Furthermore, if $\ell_\v$ denote the union of $\mathcal{R}_{f_0}(\theta_\v)\setminus U'_\v$ and $\varphi_\v^{-1}(\mathcal{R}_g(\v, 0)\cap V'_{\v,g})$, then $\ell_\v$ is a ray, that is a simple curve starting from the infinity,
        and $\widetilde{f}^{r(\v)}(\ell_\v)=\ell_{\sigma(\v)}$.
\end{enumerate}
\end{Theorem}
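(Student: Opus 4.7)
The argument is a quasiconformal surgery: inside each $U'_\v$, replace the $F_0$-dynamics by a conformal copy of a natural GPL restriction of $g$; keep $\widetilde f = f_0$ outside $\bigcup_\v U'_\v$; and bridge the two regimes by a quasiconformal map on the annular buffer $U'_\v \setminus U_{\v, g}$.

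Concretely, I would first build a GPL model for $g$ by taking $V'_{\v, g} = \{z : G_g(\v, z) < R\}$ for some small $R > 0$ (so that $\mathcal{R}_g(\v, 0)$ crosses $\partial V'_{\v, g}$ transversally at a single point $q_\v$), and $V_{\v, g} = g_\v^{-1}(V'_{\sigma(\v), g})$; this is a GPL restriction of $g$ with fiberwise connected filled Julia set. Next, pick a Riemann isomorphism $\varphi_\v : U'_\v \to V'_{\v, g}$ normalized so that $\varphi_\v(p_\v) = q_\v$, where $p_\v = \mathcal{R}_{f_0}(\theta_\v) \cap \partial U'_\v$, and set $U_{\v, g} = \varphi_\v^{-1}(V_{\v, g})$. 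Define
\[\widetilde f(z) = \begin{cases} f_0(z) & \text{if } z \in \C \setminus \bigcup_\v U'_\v, \\ (f_0^{r(\v)-1})^{-1} \circ \varphi_{\sigma(\v)}^{-1} \circ g_\v \circ \varphi_\v (z) & \text{if } z \in U_{\v, g}, \end{cases}\]
where the branch of $(f_0^{r(\v)-1})^{-1}$ is the one landing in $f_0(U'_\v)$ (it exists because the intermediate Fatou components $f_0^j(\v)$ for $1 \leq j < r(\v)$ are non-critical, so $f_0^{r(\v)-1}: f_0(U'_\v) \to U''_{\sigma(\v)}$ is a conformal isomorphism). On the annulus $U'_\v \setminus U_{\v, g}$ I would extend $\widetilde f$ quasiconformally to match $f_0$ on $\partial U'_\v$ and the holomorphic formula on $\partial U_{\v, g}$; since both boundary maps are degree-$\delta(\v)$ coverings of two nested Jordan curves in $f_0(U'_\v)$, a standard QC covering interpolation gives a uniformly-$K$-qc extension. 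Then $\widetilde f$ is quasi-regular of degree $d$, $\widetilde F$ is a GPL map by construction, and the $\varphi_\v$'s furnish the fiberwise conformal conjugacy to the $g$-model required in (1)--(2).

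For (3), simplicity of $\ell_\v$ is immediate from the normalization $\varphi_\v(p_\v) = q_\v$. To verify $\widetilde f^{r(\v)}(\ell_\v) = \ell_{\sigma(\v)}$ I would decompose $\ell_\v$ into the exterior arc $(a) = \mathcal{R}_{f_0}(\theta_\v) \setminus U'_\v$, the annular arc $(b) = \varphi_\v^{-1}(\mathcal{R}_g(\v, 0) \cap (V'_{\v, g} \setminus V_{\v, g}))$, and the interior arc $(c) = \varphi_\v^{-1}(\mathcal{R}_g(\v, 0) \cap V_{\v, g})$. The conjugation gives $\widetilde f^{r(\v)}((c)) = \varphi_{\sigma(\v)}^{-1}(\mathcal{R}_g(\sigma(\v), 0) \cap V'_{\sigma(\v), g})$, the full interior portion $(b') \cup (c')$ of $\ell_{\sigma(\v)}$. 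Choosing the thickenings $U'_\w$ small enough that no $U'_\w$ meets any intermediate iterate ray $\mathcal{R}_{f_0}(d^j \theta_\v)$ for $0 < j < r(\v)$ --- possible because each such ray lands on the boundary of a non-critical Fatou component, at positive distance from every critical $\w$ by primitivity --- we get $\widetilde f^j((a)) = f_0^j((a))$ for $j \leq r(\v)$, and properness of $f_0^{r(\v)}: U'_\v \to U''_{\sigma(\v)}$ together with $d^{r(\v)} \theta_\v = \theta_{\sigma(\v)}$ yields $\widetilde f^{r(\v)}((a)) = \mathcal{R}_{f_0}(\theta_{\sigma(\v)}) \setminus U''_{\sigma(\v)}$. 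The remaining gap $\mathcal{R}_{f_0}(\theta_{\sigma(\v)}) \cap (U''_{\sigma(\v)} \setminus U'_{\sigma(\v)})$ of $\ell_{\sigma(\v)}$ must then be $\widetilde f^{r(\v)}((b))$.

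The principal obstacle is arranging the QC interpolation on the annulus so that, beyond matching the prescribed boundary data, it sends the specific arc $(b)$ onto the pullback $(f_0^{r(\v)-1})^{-1}\bigl(\mathcal{R}_{f_0}(\theta_{\sigma(\v)}) \cap (U''_{\sigma(\v)} \setminus U'_{\sigma(\v)})\bigr) \subset f_0(U'_\v) \setminus (f_0^{r(\v)-1})^{-1}(U'_{\sigma(\v)})$. I would handle this by cutting the source annulus $U'_\v \setminus U_{\v, g}$ along $(b)$ and the target annulus in $f_0(U'_\v)$ along the prescribed image arc, reducing to a QC interpolation between two quadrilaterals with compatible boundary data and degree, and then appealing to standard Beltrami extension to conclude.
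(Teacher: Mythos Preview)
Your proposal is correct and follows essentially the same surgery as the paper: define a Riemann map $\varphi_\v:U'_\v\to V'_{\v,g}$ onto a sub-potential domain of $g$, transplant $g$ via $(f_0^{r(\v)-1}|f_0(U'_\v))^{-1}\circ\varphi_{\sigma(\v)}^{-1}\circ g_\v\circ\varphi_\v$, and quasiconformally interpolate on the annulus so that the arc $(b)$ is sent to the $f_0$-image of the original ray segment. Two minor remarks: (i) the paper normalizes $\varphi_\v$ by prescribing both the boundary point $a'_\v\mapsto a'_{\v,g}$ and an interior point $a_\v\mapsto a_{\v,g}$, which pins down $\varphi_\v$ uniquely; your single boundary-point normalization leaves two real parameters free, but this is harmless for existence. (ii) Your extra hypothesis that the intermediate rays $\mathcal{R}_{f_0}(d^j\theta_\v)$, $0<j<r(\v)$, avoid every $U'_\w$ is in fact automatic from the setup rather than a further choice: since $Z$ is buried, the landing point of each such ray (on $\partial f_0^j(\v)$) lies outside $\overline{Y_{N_0}(\w)}$, hence the ray cannot enter $Y_{N_0}(\w)\supset U'_\w$.
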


\begin{proof}
Let $a'_\v$ (resp. $a_\v$) denote the unique intersection point of $\mathcal{R}_{f_0}(\theta_\v)$ with $\partial U'_\v$ (resp. $U_\v$).
Let $V'_{\v,g}=\{z\in \C: |G_g(\v,z)|<1\}$ and $V_{\v,g}=\{z\in \C: |G_g(\v,z)|<1/\delta(\v)\}$, where $G_g$ is the Green function of $g$. Let $a'_{\v,g}$ (resp. $a_{\v, g}$) denote the unique intersection point of the external ray $\mathcal{R}_g(\v, 0)$ with $\partial V'_{\v, g}$ (resp. $\partial V_{\v, g}$).

Let $\varphi_\v$ denote the unique Riemann mapping from $U'_\v$ onto $V'_{\v,g}$ such that $\varphi_\v(a'_{\v})=a'_{\v, g}$ and $\varphi_\v (a(\v))=a_{\v, g}$. Define $U_{\v,g}=\varphi_{\v}^{-1}(V_{\v,g})$ and define
$\widetilde{f}|U_{\v,g}= (f^{r(\v)-1}|f(U'_\v))^{-1}\circ (\varphi_{\sigma(\v)}^{-1}\circ g\circ \varphi_\v)$ which is a holomorphic proper map of degree $\delta(\v)$. Finally, define $\widetilde{f}$ on each annulus $U'_{\v}\setminus U_{\v, g}$ so that $\widetilde{f}$ is a quasiregular covering map from this annulus to $f_0(U'_{\v}\setminus U_{\v})$ of degree $\delta(\v)$ and $\widetilde{f}$ maps the arc $\varphi^{-1}_{\v}(\mathcal{R}_g(\v,0)\cap (V_\v'\setminus V_\v))$ onto the arc $f_0(\mathcal{R}_{f_0}(\theta_\v)\cap (U_\v'\setminus U_\v))$. All the desired properties are easily checked.
\end{proof}

\begin{proof}[Proof of the Main Theorem (Surjectivity)] Let $\widetilde f$ be the quasi-regular map constructed in Theorem~\ref{thm:tildef}, and let $C\ge 1$ be the maximal dilatation of $\widetilde{f}$.
Let us check that it satisfies the conditions of Theorem~\ref{thm:Thurston}. Firstly, it is a quasi-regular polynomial and $\bar{\partial} f=0$ a.e. on $\C\setminus A$, where
$$A:=\bigcup\limits_{\v \in |T|} U'_{\v}\backslash U_{\v,g}\subset \bigcup_{\v} Y_{N_0+N}(\v).$$  To construct the set $\mathcal{B}$, let
$$R(A)=\{x\in A:\exists n\ge 1\text{ such that } \widetilde{f}^n(z)\in A\}$$ be the return domain to $A$ under $\widetilde{f}$.
For every $x \in R(A)$, there is a smallest integer $k=k(x)$ such that $\tilde f^k(x) \in \bigcup\limits_{\v} Y_{N_0}(\v)\backslash \overline{Y_{N_0+r(\v)}(\v)}=:\Omega$. It is easy to see $Q:=\sup\limits_{x \in R(A)}k(x)<\infty$.  Let
$$E=\left\{z\in \Omega: \exists n\ge 1, \text{ such that } \widetilde{f}^n(z)\in \bigcup_{\v}Y_{N_0}(\v)\right\},$$
and let $\mathcal{B}$ be the union of components  of $D(E)$ which intersect $R(A)$. So $\mathcal{B}\supset R(A)$.
Consequently, if $\widetilde{f}^j(z)\not\in \mathcal{B}$ for $0\le j<n$, then $\#\{0\le j<n: \widetilde{f}^j(z)\in A\}\le 1$. So the condition ($\ast$) in Theorem~\ref{thm:Thurston} holds with $T=1$.

Let us show that $\mathcal{B}$ is a free $M$-nice set for some $M>0$. To this end, we first observe that $E$ and hence $\mathcal{B}$ is a nice set of $\widetilde{f}$.  Since $\overline{\Omega}$ is disjoint from the post-critical set of $\widetilde{f}$ and $\mathcal{B}\subset \bigcup_{k=0}^Q \widetilde{f}^{-k}(\Omega)$, $\mathcal{B}$ is free.
Fix a component $B$ of $\mathcal{B}$, let $s$ be the entry time of $B$ into $E$, and let $t$ denote the return time of $\widetilde{f}^{s}(B)$ into $\bigcup_{\v} Y_{N_0}(\v)$. Then $\widetilde{f}^{s+t}$ maps $B$ homeomorphically onto a component of $\bigcup_{\v}Y_{N_0}(\v)$
and the map $\widetilde{f}^{s+t}|B$ is $C$-qc. In fact, for each $x\in B$, $\#\{0\le j<s: \widetilde{f}^j(x)\in A\}\le 1$ and $\widetilde{f}^t|\widetilde{f}^s(B)=f_0^t|\widetilde{f}^s(B)$ is conformal.
Our assumption on $N_1$ and $N_0$ ensures that $B\subset \mathcal{B}\subset \bigcup_{\v}Y_{N_0+N}(\v)$, so that
$\widetilde{f}^{s+t} (D(\mathcal{B})\cap B)\subset L_{N_0+N}$. It follows from Lemma~\ref{qc1} and Theorem~3'
that
\[\mathcal{QD}(D(\mathcal {B})\cap B, B) \le C^{2}\max_{\v\in |T|}(L_{N_0+N}\cap Y_{N_0}(\v),Y_{N_0}(\v))=:M<\infty.\]
Since $\widetilde{f}^{s+t}|B$ extends to a $C$-qc map onto a neighborhood of $\widetilde{f}^{s+t}(B)$, enlarging $M$ if necessary, we have that
$M\text{area}(B)\ge \text{diam}(B)^2$ and $M\text{area}(B\cap D(\mathcal{B}))>\text{area}(B)$.
This proves that $\mathcal{B}$ is $M$-nice.

So by Theorem~\ref{thm:Thurston}, there is a continuous surjective map $h$ and a map $f\in Poly_d$
such that $f\circ h=h\circ \widetilde{f}$. The map $h$ is holomorphic and $h(z)=z+o(1)$ near infinity. Near $\infty$, $\widetilde{f}=f_0$. Thus $h=\phi_f\circ \phi_{f_0}^{-1}$ near $\infty$, where $\phi_f$ and $\phi_{f_0}$ are the B\"ottcher map for $f$ and $f_0$ respectively. It follows that $h(\ell_\v)$ is the external ray $\mathcal{R}_f(\theta_\v)$. By Proposition~\ref{prop:com}, $f\in \mathcal{C}(f_0)$ and $F:\bigcup_{\v}\{\v\}\times h(Y_{N_0+r(\v)}(\v))\to \bigcup_{\v}\{\v\}\times h(Y_{N_0}(\v))$, $F|h(Y_{N_0+r(\v)}(\v))=f^{r(\v)}$, is a $\lambda(f_0)$-renormalization of $f$.

In order to show that $\chi(f)=g$, we need to show that $F$ and $\widetilde{F}$ are hybrid equivalent. Let us consider the associated maps $\widetilde{\textbf{F}}: \bigcup_{\v} Y_{N_0+r(\v)}(\v)\to \bigcup_{\v} Y_{N_0}(\v)$ and
$\textbf{F}:\bigcup_{\v} h(Y_{N_0+r(\v)}(\v))\to \bigcup_{\v}h(Y_{N_0}(\v))$, where $\widetilde{\textbf{F}}|Y_{N_0+r(\v)}(v)=\widetilde{f}^{r(\v)}|Y_{N_0+r(\v)}(\v)$ and $\textbf{F}|h(Y_{N_0+r(\v)})=f^{r(\v)}|h(Y_{N_0+r(\v)}(\v))$. It suffices to show that there is a qc map $H:\bigcup_{\v} U'_\v\to \bigcup_{\v} h(U'_\v)$ such that $H\circ \widetilde{\textbf{F}}=\textbf{F}\circ H$ and such that $\bar{H}=0$ a.e. on the filled Julia set $K(\widetilde{\textbf{F}})$ of $\widetilde{\textbf{F}}$. Note that $h\circ \widetilde{\textbf{F}}= \textbf{F}\circ h$ and $K(\widetilde{\textbf{F}})$ contains the postcritical set of $\widetilde{\textbf{F}}$.
By Theorem~\ref{thm:Thurston}, there is a qc map $\lambda_0$ such that $\lambda_0=h$ outside $\mathcal{B}':=\bigcup_{k=0}^\infty \widetilde{f}^{-k}(\mathcal{B})$ and such that $\bar{\partial } \lambda_0=0$ a.e. on $\{z: \widetilde{f}^n(z)\notin A, \forall n\ge 0\}$. In particular, $\bar{\partial} \lambda_0=0$ a.e. on $K(\widetilde{\textbf{F}})$. Since $\mathcal{B}'$ is a countable union of Jordan disks with pairwise disjoint closure and it is disjoint from $K(\widetilde{\textbf{F}})$, $\lambda_0$ is homotopic to $h$ rel $K(\widetilde{\textbf{F}})$.
Therefore, there is a sequence of qc maps $\lambda_k:\bigcup_{\v} Y_{N_0}(\v)\to \bigcup_{\v} h(Y_{N_0}(\v))$, $k=1,2,\ldots$, all homotopic to $h$ rel $K(\widetilde{\textbf{F}})$, such that $\textbf{F}\circ \lambda_{k+1}=\lambda_k \circ \widetilde{\textbf{F}}$ and $\lambda_k=\lambda_0$ on $W:=\bigcup_{\v} (Y_{N_0}(\v)\setminus Y_{N_0+r(\v)}(\v))$ holds for $k=0,1,\ldots$. Since $\textbf{F}$ is holomorphic, $\widetilde{\textbf{F}}$ is holomorphic outside $A$,
and each orbit of $\widetilde{\textbf{F}}$ passes through $A$ at most once, the maximal dilatation of $\lambda_k$ is uniformly bounded. Since $\lambda_k(z)$ eventually stablizes for each $z$ in the domain of $\widetilde{\textbf{F}}$, $\lambda_k$ converges to a qc map $H$. Moreover, $\bar{\partial }H =0$ holds a.e. on $K(\widetilde{\textbf{F}})$ since so does $\lambda_k$ for each $k$.
\end{proof}

In order to show that $\mathcal{C}(f_0)$ is connected, we shall make use of the following result.
\begin{Theorem}[Branner-Hubbard-Lavaurs]\label{thm:bhl}
The set $\mathcal{C}(T)$ is a connected compact set.
\end{Theorem}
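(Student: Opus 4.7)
The plan is to treat compactness and connectedness separately, and for connectedness to reduce to the Branner-Hubbard theorem on connectedness of $\mathcal{C}_d$ by passing to the composition polynomials associated to the cycles of $\sigma$ in $|T|$.

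Compactness would be handled by a direct escape-radius argument on the finite-dimensional parameter space $P(T) = \prod_{\v \in |T|} \text{Poly}_{\delta(\v)}$. Since $\mathcal{C}(T)$ is precisely the locus where all critical orbits of $g$ as a self-map of $|T| \times \C$ remain bounded, and since escape of any critical orbit is an open condition on $P(T)$, $\mathcal{C}(T)$ is closed. Boundedness follows from the observation that if some coefficient of some $g_\v$ grows without bound, then iterating $g$ around a $\sigma$-cycle through $\v$ forces some critical orbit in the fiber to escape to infinity.

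For connectedness, denote by $C_1,\dots,C_k$ the $\sigma$-cycles in $|T|$, and for each $C_j$ pick a basepoint $\v_j \in C_j$ with period $p_j$ and define the composition polynomial $P_{j,g} := g_{\sigma^{p_j-1}(\v_j)}\circ\cdots\circ g_{\v_j}$ of degree $D_j = \prod_{\v \in C_j}\delta(\v)$. The fiberwise connectedness condition translates, via pulling back along the scheme, to the conjunction of (i) $P_{j,g} \in \mathcal{C}_{D_j}$ for every $j$, and (ii) the critical values of each pre-periodic fiber polynomial $g_\v$ lie in the filled Julia set of the fiber over $\sigma(\v)$. I would then define the parameter Green function $M: P(T) \to [0,\infty)$ by $M(g) = \max_{\v,c} G_g(\v,c)$ over critical points $c$ of all $g_\v$, so that $\mathcal{C}(T) = M^{-1}(0)$. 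Adapting the Branner-Hubbard stratification by escape rates, I would show that each sublevel set $\{M < r\}$ is a connected open neighborhood of $\mathcal{C}(T)$, whose nested intersection is $\mathcal{C}(T)$; combined with compactness, this yields connectedness.

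The main obstacle lies in extending Branner-Hubbard's single-polynomial stratification to the multi-cycle scheme. In the single-polynomial case one parameterizes $\text{Poly}_d \setminus \mathcal{C}_d$ using the B\"ottcher coordinate of the ``most escaping'' critical value and inductively uniformizes the complement. In the generalized polynomial setting, each cycle $C_j$ carries its own composition polynomial and its own escape dynamics, and critical points in different fibers can escape at different rates. Organizing the stratification so that the fiberwise B\"ottcher parameterizations patch coherently along the strata where the ``most escaping'' critical point switches fibers --- especially when pre-periodic fibers are involved --- is precisely where Lavaurs' refinement enters, and adapting it to the scheme $T$ is where I expect the bulk of the combinatorial work to lie.
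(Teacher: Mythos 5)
Your proposal matches the paper's treatment: the paper gives no independent proof of this theorem, but cites Douady--Hubbard, Branner--Hubbard and Lavaurs for $\mathcal{C}_d$ and asserts that Lavaurs' escape-rate argument generalizes to $\mathcal{C}(T)$ in a straightforward way (also pointing to De Marco--Pilgrim for an alternative), which is exactly the strategy you outline. Your compactness sketch and your reduction of fiberwise connectedness to conditions on the cycle compositions and the preperiodic fibers are correct, and the adaptation of the stratification that you flag as the remaining bulk of the work is precisely the step the paper leaves to the literature.
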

\begin{proof} The proofs in the literature were stated for the case $\mathcal{C}(d)$. For $d=2$ this is due to Douady-Hubbard (\cite{DH1}), the case $d=3$ was proved by Branner-Hubbard (\cite{BH}) and for all $d\ge 3$ this was proved by Lavaurs (\cite{La}).
The proof of Lavaurs generalizes to the case of $\mathcal{C}(T)$ in a straightforward way. See also \cite{DP} for a stronger result with a different proof.
\end{proof}
\begin{proof}[Proof of the Main Theorem (connectivity)] We shall show that if $E$ is a non-empty open and closed subset of $\mathcal{C}(f_0)$, then $\chi(E)$ is a closed subset of $\mathcal{C}(T)$. Together with connectivity of $\mathcal{C}(T)$ and bijectivity of the map $\chi$, this implies that $\mathcal{C}(f_0)$ is connected.

Suppose that $g_n$ is a sequence in $\chi(E)$ and $g_n\to g$ in $\mathcal{C}(T)$. We need to show that $g\in \chi(E)$. Let $f_n=\chi^{-1}(g_n)\in E$. Since $E$ is compact, passing to a subsequence we may assume that $f_n\to f\in E$. As in \cite[Section 7]{DH2}, we may choose hybrid conjugacies $h_n$ between $\lambda(f_0)$-renormalization of $f_n$ and $g_n$ so that the maximal dilatation of $h_n$ is uniformly bounded. Passing to a further subsequence, we see that the $\lambda(f_0)$-renormalization of $f$
%\marginpar{change $\lambda(f_0)$-the renormalization to the $\lambda(f_0)$-renormalization}
is qc conjugate to $g$ respecting the external markings. Thus $f$ is conjugate to $\chi^{-1}(g)$ via a qc map $h:\C\to \C$ which is conformal outside the filled Julia set of $f$ and satisfies $h(z)=z+o(1)$ near infinity.  The Beltrami path connecting $f$ and $\chi^{-1}(g)$ is contained in $E$ and thus $g=\chi(\chi^{-1}(g))\in \chi(E)$.
\end{proof}

\bibliographystyle{plain}             %    other options: alpha, plain, abbrv ...

\end{document}